\numberwithin{equation}{section}
\newtheorem{theorem}{Theorem}[section]
\newtheorem{prop}[theorem]{Proposition}
\newtheorem{definition}[theorem]{Definition}
\newtheorem{lem}[theorem]{Lemma}
\theoremstyle{remark}
\newtheorem{remark}[theorem]{Remark}
\def\M{\mathsf{M}}
\def\R{\mathbb{R}}
\def\N{\mathbb{N}}
\def\cA{\mathcal{A}}
\def\id{{\rm{id}}}
\def\div{{\rm div}}
\def\max{{\rm max}}
\begin{document}

\title[Functional Inequalities and Fractional Porous Medium Equation]{Functional Inequalities involving Nonlocal Operators on Complete Riemannian Manifolds and Their Applications to The Fractional Porous Medium Equation}

\author[N. Roidos]{Nikolaos Roidos}
\address{Department of Mathematics, University of Patras, 26504 Rio Patras, Greece}
\email{roidos@math.upatras.gr}

\author[Y. Shao]{Yuanzhen Shao}
\address{Department of Mathematics,
         The University of Alabama,
         Box 870350,
         Tuscaloosa, AL 35487-0350,
         USA}
\email{yshao8@ua.edu}

\subjclass[2010]{26A33, 35K65, 35K67, 35R01, 35R11, 	39B62, 76S05}
\keywords{Functional inequalities for fractional Laplacian, nonlocal logarithmic Sobolev inequality, fractional porous medium equation, nonlinear nonlocal diffusion, Riemannian manifolds}

\begin{abstract}
The objective of this paper is twofold. First,  we conduct a careful study of various functional inequalities   involving the fractional Laplacian operators, including nonlocal Sobolev-Poincar\'e, Nash, Super Poincar\'e and logarithmic Sobolev type inequalities, on complete Riemannian manifolds satisfying some mild geometric assumptions.
Second, based on the derived nonlocal functional inequalities, we analyze the asymptotic behavior  of the solution  to the fractional porous medium equation, $\partial_t u +(-\Delta)^\sigma (|u|^{m-1}u  )=0 $ with $m>0$ and $\sigma\in (0,1)$.
In addition, we establish the global well-posedness of the equation on an arbitrary complete Riemannian manifold.
\end{abstract}

\maketitle

\section{\bf Introduction}\label{Section 1}

This manuscript is mainly motivated by the asymptotic behavior of the solution to  the following fractional porous medium equation:
\begin{equation}
\label{S1: FPME}
\left\{\begin{aligned}
\partial_t u +(-\Delta )^\sigma (|u|^{m-1}u  )&=0   &&\text{on}&&\M\times (0,\infty);\\
u(0)&=u_0   &&\text{on}&&\M
\end{aligned}\right.
\end{equation}
for $m \in (0, \infty)$ and $\sigma\in (0,1)$  
on a  smooth complete Riemannian manifold $(\M,g)$ without boundary.
Here $\Delta $ is the Laplace-Beltrami operator associated with the Riemannian metric $g$.

Global well-posedness of the Cauchy problem of \eqref{S1: FPME} was first studied by A. Pablo, F. Quir\'os, A. Rodr\'iguez and J.L. V\'azquez in \cite{PalRodVaz11, PalRodVaz12} in Euclidean spaces, and later by M. Bonforte, A. Pablo, F. Quir\'os,  A. Rodr\'iguez,  Y. Sire  and J.L. V\'azquez in \cite{BonSireVaz15, BonVaz15, BonVaz16, PalRodVaz12} for the Dirichlet problem on bounded domains. 
Since then, there has been a vast amount of work \cite{BonFigOto17, BonFigVaz18, Vaz14, Vaz15}, just to name a few, investigating various properties, e.g. regularity and Barenblatt solutions, of the solutions to \eqref{S1: FPME}.
%Interested readers may refer to \cite{BonFigOto17, BonFigVaz18, Vaz14, Vaz15}, just to name a few.
See also \cite{AlpEll16, Grillo15, PunTer14} for some related work. 
Most of the work on \eqref{S1: FPME} considered flat spaces. As far as we know, the only exception is the work \cite{AlpEll16} by A. Alphonse and C.M. Elliott, which studied a variation of \eqref{S1: FPME} with fractional power $\sigma=1/2$ on a closed manifold. 
Very recently, we studied \eqref{S1: FPME}  on an incomplete manifold with isolated conical singularities and finite volume in  \cite{RoidosShao1802, RoidosShao18}.

If we come to the problem of asymptotic behavior of the solution to \eqref{S1: FPME} on Riemannian manifolds, it becomes clear  that an essential component is still missing. 
Indeed, as shown in \cite[Section~5]{PalRodVaz12},  certain nonlocal functional inequalities  play a crucial role in  the asymptotic analysis of \eqref{S1: FPME} in Euclidean spaces.
It is expected as the role of the local counterparts of these inequalities  have already been recognized in the asymptotic analysis of the porous medium equation in Euclidean spaces.  

In manifolds setting, most of the research \cite{BonGriVaz08, Grillo20, GrilloMur14, GrilloMur16, GrilloMurPun18, GrilloMurPun1802, GrilloMurVaz17, GrilloMurVaz19, Vaz1502} on large time behavior of the porous medium equation  focuses on  manifolds of nonpositive curvature, e.g. the hyperbolic space. This is mainly due to the validity of certain functional inequalities in such spaces. 
In order to study \eqref{S1: FPME} for more general manifolds setting,  we will use the idea by M. Bonforte and G. Grillo in \cite{BonGri05}, which studied the asymptotic behavior of solution to the porous medium equation  based on some logarithmic Sobolev inequalities. Such inequalities are known to hold on closed manifolds or manifolds satisfying the Faber-Krahn inequality.	
%On Riemannian manifolds, arguably, the most successful approach to the asymptotic behavior of solution to the porous medium equation is based on certain logarithmic Sobolev inequalities, c.f. \cite{BonGri05}. 
%The success of logarithmic Sobolev inequalities in the study of asymptotics of the porous medium equation is somehow expected, as 
The asymptotic behavior or the smoothing effect of the porous medium equation  can be viewed as a special case of the ultracontractivity property.
We recall that the ultracontractivity property of an equation allows the $L_\infty-$norm of the solution to be bounded by the $L_p-$norm of the initial datum.
As was first discovered in the pioneering work \cite{Gross} by L. Gross, the ultracontractivity of the heat semigroup is closely related to logarithmic Sobolev inequalities.
This relationship was later explored in more depth in the monograph \cite{Dav89} by E.~Davies. 
The proofs heavily rely on the theory of  symmetric Markov semigroups, which, briefly speaking,  are strongly continuous symmetric semigroups that are order-preserving and contractive on $L_\infty(\M)$. 
The idea of using logarithmic Sobolev inequalities to study the ultracontractive bounds and asymptotics of nonlinear evolutions traces back to the work \cite{CarLoss95} by E.~Carlen and M.~Loss.  
This method was later applied to the   $p$-Laplacian equation in \cite{CipGri01}.
Those observations suggest that a similar approach should be applicable to \eqref{S1: FPME} as well, because the associated nonlinear semigroup to \eqref{S1: FPME} satisfies some order-preserving and contractive properties. 
%Based on the nonlocal logarithmic Sobolev inequalities developed in Section~\ref{Section 4}, we show the ultracontractive property and asmyptotics of solutions to \eqref{S1: FPME} when $m>1$ in Sections~\ref{Section 6} and \ref{Section 7}.

However, the validity of  nonlocal versions of logarithmic Sobolev inequalities  remains an open question on general Riemannian manifolds.
A widely used approach to functional inequalities  on manifolds is  via a local to global argument.
After a moment of reflection, it is not hard to convince ourselves that the method does not work for our purpose as the fractional Laplacian operator is nonlocal.
To overcome the difficulty in establishing functional inequalities involving nonlocal operators like the fractional Laplacian, we take use of the subordination theory by Bochner. 
Briefly speaking, subordination is a method to construct a new semigroup from a given one. 
Particularly, we will show in Section~\ref{Section 3} that the fractional Laplacian can be constructed in terms of $\Delta$ via subordination and the associated semigroup is again Markovian; and then based on the  Markovian property, in Section~\ref{Section 4}, we establish two versions of nonlocal logarithmic Sobolev inequality as well as many other functional inequalities involving the fractional Laplacian under some mild geometric conditions, cf. Section~\ref{Section 2}.
%including but not limited to nonlocal versions of Sobolev-Poincar\'e and Nash type inequalities.
Based on these inequalities, we further derive some useful heat kernel and semigroup estimates for the fractional Laplacian.

In the second part of the paper, we conduct a careful  study of \eqref{S1: FPME}. The emphasis is put on the asymptotic analysis of the solution.
Based on the work in Sections~\ref{Section 3} and \ref{Section 4}, we reveal a connection between nonlocal  logarithmic Sobolev inequalities and the ultracontractivity of the fractional porous medium equation~\eqref{S1: FPME}.
$L_p-L_\infty$ regularizing effects of the form $\|u(t)\|_\infty\leq C(u_0)t^{-\alpha}$ has been shown, where $C(u_0)$ depends on $\|u_0\|_p$. 
On Riemannian manifolds satisfying a Faber-Krahn type condition, we show that the solution converges to zero uniformly.
On a compact manifold  without boundary, the $L_p-L_\infty$ regularizing effects give  an $L_\infty$-bound of the solution for small $t>0$. 
Then in conjunction with an analysis of the structure of the $\omega-$limit set of the trajectory, the derived  bound implies the convergence of the solution to the mean of the initial datum. 
The asymptotic analysis is discussed in details in Sections~\ref{Section 6} and \ref{Section 7}.

Last but not least, we would like to highlight    the global well-posedness result of \eqref{S1: FPME}  we obtained in Section~\ref{Section 5} a little. 
In an Euclidean space $\R^N$,  \eqref{S1: FPME} was studied via one of the following constructions of  the fractional Laplacian or their analogues.

(1) The authors of \cite{PalRodVaz11, PalRodVaz12} constructed the fractional Laplacian via the Caffarelli-Silvestre extension \cite{CafSil07}, i.e. consider the solution of the problem:
$$
\nabla \cdot (y^{1-2\sigma} \nabla w)=0 \quad  (x,y)\in \R^N\times \R_+; \quad w(x,0)=u(x),\quad x\in \R^N.
$$
Then  for some constant $C_\sigma$
$$
(-\Delta)^\sigma u(x)= -C_\sigma \lim\limits_{y\to 0} y^{1-2\sigma}\frac{\partial w}{\partial y}(x,y).
$$

(2) In  \cite{BonSireVaz15, BonVaz15, BonVaz16}, the authors used the Spectral Fractional Laplacian (SFL) for the Dirichlet Laplacian $\Delta$ in a bounded domain $\Omega$ defined by:
$$
(-\Delta)^\sigma u (x):= \frac{1}{\Gamma(-\sigma)}\int_0^\infty (e^{t\Delta}u(x)-u(x))\,\frac{dt}{t^{1+\sigma}}= \sum\limits_{j=1}^\infty \lambda_j^\sigma \hat{u}_j \phi_j(x),
$$
where $(\phi_k, \lambda_k)_{k=1}^\infty$ is an orthonormal basis of
$L_2(\Omega)$ consisting of eigenfunctions of  $-\Delta$  and their corresponding eigenvalues.
$\hat{u}_k$ are the Fourier coefficients of $u$.

(3) In the Restricted Fractional Laplacian (RFL) approach, the fractional Laplacian is constructed by using the integral representation in terms of hypersingular kernels:
$$
(-\Delta)^\sigma u (x)= C_{N,\sigma} P.V.\int_{\R^N} \frac{u(x) - u(y)}{|x+y|^{N+2\sigma}}\, dy \quad \text{with } u(x)=0 \text { for } x\notin \Omega.
$$

(RFL) is obviously not applicable to general manifolds. 
In \cite{StinTorr10}, the authors proved that the Caffarelli-Silvestre extension holds when $\Delta$ has discrete spectrum. 
Similar results were established in \cite{BanGonSae15} under certain geometric assumptions, c.f. \cite[Proposition~3.3]{BanGonSae15}. 
However,  when such conditions are absent,  the applicability of the Caffarelli-Silvestre extension to \eqref{S1: FPME}   remains unknown. For a similar reason,   (SFL)   also seems to be restrictive.
% when the underlying space is an arbitrary Riemannian manifold.

In this paper, we will generalize the method in our earlier work \cite{RoidosShao18} to manifolds with infinite volumes.
The approach   relies only on the existence of a Markovian extension of the fractional Laplacian.
The only geometric assumption we   impose for the  global well-posedness    of \eqref{S1: FPME} is $(\M,g)$ being a complete Riemannian manifold. Particularly, no compactness, curvature or volume condition is needed.
This seems to be the most general one so far.

Before finishing the introduction, we would like to mention a very recent work \cite{BerBonGanGri} recommended to us by  G. Grillo. It studies the smoothing effect of solutions to a larger class of data  on the hyperbolic space. Their argument relies on a different method and a nonlocal  Poincar\'e inequality. 
This seems to be a very interesting direction to explore in the future.

%%%%%%%%%%%%%%%%%%%%%%%%%%%%%%%%%%%%%
\textbf{Notations:} 
%Given any interval $I$ containing $0$, $\dot{I}:= I \setminus\{0\}$. 
%Given any topological set $U$, $\mathring{U}$ denotes the interior of $U$. 
%If $U$ consists of only one point, we set $\mathring{U}:=U$. 

%%%%%%%%%%%%%%%%%%%%%%%%%%%%%%%%%%%%%%%%%
For any two Banach spaces $X,Y$, $X\doteq Y$ means that they are equal in the sense of equivalent norms. The notations
$$
X\hookrightarrow Y, \qquad X\xhookrightarrow{d} Y
$$
mean that $X$ is continuously embedded and  densely embedded, respectively.
%$\L(X,Y)$ denotes the set of all bounded linear maps from $X$ to $Y$, and $\L(X):=\L(X,X)$. Moreover, $\Lis(X,Y)$ stands for the subset of $\L(X,Y)$ consisting of all bounded linear isomorphisms from $X$ to $Y$. 
Given a sequence $(u_k)_k:=(u_1,u_2,\cdots)$ in $X$, $u_k \rightharpoonup u$ in $X$ means that $u_k$ converge weakly to some $u\in X$.
Given a densely-defined operator $\cA$ in $X$, $D(\cA)$ and $Rng(A)$ stand  for the domain and range of $\cA$, respectively.
%For any Banach space $E$, we abbreviate $\F(\R^m,E)$ to $\F(E)$ with $\F$ stands for any function space defined in this article. 
%The precise definitions for these function spaces will be presented in Section 2. 
%\smallskip\\
%%%%%%%%%%%%%%%%%%%%%%%%%%%%%%%%%%%%%%%%%

%$u\sim v$ means two functions $u,v$ are Lipschitz equivalent.

%In addition, $\R_+:=[0,\infty)$ and $\Nz:=\N\cup \{0\}$.

\section{\bf Main Results and Geometric Assumptions}\label{Section 2}

In this section, we will collect and state the main results of the article and the geometric assumptions needed for the proofs of the functional inequalities and the asymptotic behaviors of solutions to \eqref{S1: FPME}.

To prove the functional inequalities mentioned in the introduction,
we assume that $(\M,g)$ satisfies either of the following conditions.
\begin{itemize}
\item[(A1)] $(\M,g)$ is a complete and non-compact Riemannian manifolds with infinite volume and without boundary that satisfies a Faber-Krahn type condition. More precisely, there exist constant $M>0$ and  $n>2$ such that for each $\Omega\subset\subset \M$, i.e. $\overline{\Omega}$ is a compact subset of $\M$,
$$
\lambda_1(\Omega) \geq M |\Omega|^{-2/n}.
$$
Here $\lambda_1(\Omega)$ is the first Dirichlet eigenvalue of the Laplace-Beltrami operator on $\Omega$ with vanishing Dirichlet boundary condition on $\partial\Omega$.
\item[(A2)] $(\M,g)$ is an $n$-dimensional compact Riemannian manifold  without boundary for some integer $n>2$.
\end{itemize}
In this article, a closed  manifold always means one satisfying (A2).

\begin{theorem}\label{Thm: functional ineq A1}
Suppose that $(\M,g)$ is a Riemannian manifold satisfying {\em (A1)} with $n>2$. Then the followings hold  true.
\begin{itemize} 
\item[{\em (i)}] {\em (Heat kernel Gaussian upper bound):}  For some $C,A>0$, the heat kernel 
satisfies
$$
p_\sigma(x,y,t) \leq C t^{-n/2\sigma} e^{-d^2(x,y)/At},\quad x,y\in \M,\, t>0,
$$
that is,
$$
u(t,x)=\int_M p_\sigma(x,y,t)f(y)\, d\mu_g(y)
$$
solves
\begin{equation*}
\left\{\begin{aligned}
\partial_t u +(-\Delta )^\sigma u&=0   &&\text{on}&&\M\times (0,\infty);\\
u(0,x)&=f(x)   &&\text{on}&&\M.
\end{aligned}\right.
\end{equation*}
Here $d(x,y)$ is the geodesic distance between $x$ and $y$.
\item[{\em (ii)}] {\em (Ultracontractivity):} For some $C,A >0$,
$$
\|e^{-t (-\Delta)^\sigma} u \|_\infty \leq C t^{-\frac{n}{2 p\sigma }} e^{\frac{-d^2(x,y)}{Apt}}\|u\|_p,\quad 1\leq p<\infty.
$$
\item[{\em (iii)}] {\em (Nash inequality): } For some $C>0$,
$$
\|u\|^{1+(2\sigma/n)}_2 \leq C \|u\|_1^{2\sigma/n} \|( -\Delta)^{\sigma/2} u \|_2 , \quad u \in D((-\Delta)^{\sigma/2}) \cap L_1(\M).
$$
\item[{\em (iv)}] {\em (Sobolev-Poincar\'e  inequality):}  For some $\widehat{C}>0$,
$$
\|u\|_{2n/(n-2\sigma)} \leq \widehat{C} \|( -\Delta)^{\sigma/2} u\|_2 , \quad u \in D((-\Delta)^{\sigma/2}).
$$
\item[{\em (v)}] {\em (Super Poincar\'e inequality):} For any $r>0$,
$$
\|u\|_2^2 \leq r \|u \|_1^2 + \beta(r) \| (-\Delta)^{\sigma/2} u \|_2^2, \quad u \in D((-\Delta)^{\sigma/2}) \cap L_1(\M),
$$
where $\beta: (0,\infty)\to (0,\infty)$ is a decreasing function.
\item[{\em (vi)}] {\em (Logarithmic Sobolev inequality):} For all   $u\in D((-\Delta)^{\sigma/2})$ and $\varepsilon>0$,
$$
\int_\M |u|^2 \ln(\frac{|u|}{\|u\|_2 })^2\, d\mu_g \leq  \frac{n}{2\sigma}( \|u\|_2^2 \ln (\frac{1}{\varepsilon}) + \widehat{C}\varepsilon \|( -\Delta)^{\sigma/2} u\|_2^2 ),  
$$
where $\widehat{C}$ is the constant in {\em (iv)} and  $d\mu_g$ is the volume element induced by $g$.
\end{itemize}
\end{theorem}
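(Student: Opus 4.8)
The whole statement can be organized around a single master estimate, the heat kernel Gaussian upper bound (i), from which (ii)--(v) follow by the classical equivalences for symmetric Markov semigroups and (vi) by an elementary linearization. The guiding principle is that subordinating $e^{t\Delta}$ by the $\sigma$-stable subordinator realizes $(-\Delta)^\sigma$ as a symmetric Markov generator whose ``effective dimension'' is $\nu:=n/\sigma$. Since $n>2>2\sigma$ we have $\nu>2$, and every exponent appearing in (ii)--(vi) is exactly the one produced by the dimension-$\nu$ version of the corresponding inequality; so the strategy is to prove (i) honestly and then read off the rest with $\nu$ in place of the usual dimension.

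First I would prove (i). Under (A1), Grigor'yan's theorem identifies the Faber--Krahn inequality with the on-diagonal bound $p(x,y,t)\le C t^{-n/2}$ for the local heat semigroup $e^{t\Delta}$, and the off-diagonal Gaussian factor is recovered from the Davies--Gaffney integrated maximum principle. I would then pass to the fractional kernel through the subordination identity set up in Section~\ref{Section 3},
\[
p_\sigma(x,y,t)=\int_0^\infty p(x,y,s)\,\eta_{t,\sigma}(s)\,ds ,
\]
where $\eta_{t,\sigma}$ is the density of the $\sigma$-stable subordinator. Inserting the Gaussian bound for $p$ and exploiting the scaling $\eta_{t,\sigma}(s)=t^{-1/\sigma}\eta_{1,\sigma}(s\,t^{-1/\sigma})$ together with the algebraic decay of $\eta_{1,\sigma}$ at infinity, a splitting of the $s$-integral yields both the rate $t^{-n/2\sigma}$ and the Gaussian tail in $d(x,y)$. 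Integrating this bound against $L_p$ data and interpolating then gives the ultracontractive estimate (ii).

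Next, the on-diagonal form of (i), namely $\|e^{-t(-\Delta)^\sigma}\|_{1\to\infty}\le C t^{-\nu/2}$, is equivalent, through the Carlen--Kusuoka--Stroock and Varopoulos circle of ideas, to the Nash inequality in dimension $\nu$ (this is (iii), since $(-\Delta)^{\sigma/2}=A^{1/2}$ with $A=(-\Delta)^\sigma$ and $4/\nu=4\sigma/n$) and to the Sobolev inequality $\|u\|_{2\nu/(\nu-2)}\le \widehat{C}\|A^{1/2}u\|_2$ with $2\nu/(\nu-2)=2n/(n-2\sigma)$, which is (iv). The super Poincar\'e inequality (v), with an explicit decreasing $\beta$, follows from Nash by F.-Y.~Wang's equivalence between Nash-type and super Poincar\'e inequalities. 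Finally I would deduce (vi) from (iv): writing $q=2n/(n-2\sigma)$ and applying Jensen's inequality to the probability measure $|u|^2\|u\|_2^{-2}\,d\mu_g$ gives
\[
\int_\M |u|^2\ln\!\Big(\frac{|u|}{\|u\|_2}\Big)^2 d\mu_g \le \frac{n}{\sigma}\,\|u\|_2^2\,\ln\frac{\|u\|_q}{\|u\|_2},
\]
because $2q/(q-2)=n/\sigma$; inserting (iv) and linearizing the logarithm through $\ln z\le \ln(1/\varepsilon)+\varepsilon z$ then produces the stated form with the factor $n/2\sigma$ and the Sobolev constant from (iv).

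I expect the genuine obstacle to be the subordination step. Everything downstream is classical semigroup machinery applied with effective dimension $\nu=n/\sigma$, but transferring the \emph{full Gaussian} bound, rather than merely its on-diagonal value, through the integral against $\eta_{t,\sigma}$ requires sharp two-sided control of the stable subordinator density in both the small- and large-$s$ regimes, and it is precisely there that the correct joint dependence on $t$ and $d(x,y)$ must be extracted.
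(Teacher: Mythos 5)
Your downstream machinery is sound, but the step you yourself flagged as the obstacle is not merely hard — it fails, and it cannot be repaired. The $\sigma$-stable subordinator density has a heavy polynomial tail, $\eta_{1,\sigma}(r)\sim\frac{\sigma}{\Gamma(1-\sigma)}\,r^{-1-\sigma}$ as $r\to\infty$, forced by its L\'evy measure. Consequently, whenever the local kernel also satisfies a Gaussian \emph{lower} bound — e.g.\ on $\R^n$, which satisfies (A1) — the region $s\asymp d^2(x,y)$ of your subordination integral already contributes
$$
p_\sigma(x,y,t)\ \geq\ \int_{d^2}^{2d^2}(4\pi s)^{-n/2}e^{-d^2/4s}\,\eta_{t,\sigma}(s)\,ds\ \geq\ c\,t\,d(x,y)^{-n-2\sigma}, \qquad d(x,y)\geq C\,t^{1/2\sigma},
$$
which for fixed $t$ dominates $C t^{-n/2\sigma}e^{-d^2(x,y)/At}$ as $d(x,y)\to\infty$. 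What your splitting of the $s$-integral actually yields is the bound $p_\sigma(x,y,t)\leq C\min\{t^{-n/2\sigma},\,t\,d(x,y)^{-n-2\sigma}\}$, which is sharp on $\R^n$ (for $\sigma=1/2$ it is exactly the Poisson kernel profile). So no two-sided control of $\eta_{t,\sigma}$ can extract a Gaussian tail: part (i) as stated is false on $\R^n$, and the factor $e^{-d^2(x,y)/Apt}$ in (ii) (which in any case has free variables $x,y$ on the right of an $L_\infty$ bound) is equally unattainable. Your difficulty is diagnostic of a defect in the statement, not only in your proposal. Be aware that the paper's own treatment of this point is different but also unsound: it proves only the on-diagonal bound and then cites Grigor'yan's theorem \cite{Grig97} to append the Gaussian factor, yet that theorem concerns the heat kernel of the local Laplace--Beltrami operator and its proof rests on locality, so it does not apply to the nonlocal semigroup $e^{-t(-\Delta)^\sigma}$.

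Everything else in your proposal is correct and recovers (iii)--(vi) together with the on-diagonal parts of (i)--(ii). Your on-diagonal transfer is legitimate, since $\int_0^\infty r^{-n/2}\eta_{1,\sigma}(r)\,dr<\infty$ (superexponential decay at $0$, tail $r^{-n/2-1-\sigma}$ at infinity) gives $p_\sigma(x,y,t)\leq Ct^{-n/2\sigma}$; then the equivalences of Theorem~\ref{VSC Thm} with effective dimension $n/\sigma>2$ give (iii) and (iv), super Poincar\'e follows (the paper simply uses Young's inequality on the Nash inequality rather than Wang's equivalence), and your Jensen-plus-linearization proof of (vi) is the paper's proof verbatim. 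The structural difference is where subordination acts: you subordinate the kernel, whereas the paper never subordinates kernels at this stage — it subordinates the functional inequality itself, using Schilling--Wang \cite[Theorem~1]{SchiWang12} to pass from the Nash inequality for $\Delta$ (obtained from (A1) via Grigor'yan's on-diagonal estimate \cite{Grig94} and Theorem~\ref{VSC Thm}) directly to the Nash inequality (iii) for $(-\Delta)^\sigma$, then deduces (iv) from Theorem~\ref{VSC Thm}, (v) by Young, (vi) by Jensen, and only afterwards recovers the kernel and semigroup bounds from (iv). Both routes are equally effective for the parts of the theorem that are true; if you replace the Gaussian factors in (i)--(ii) by the polynomial bound above, or drop them, your argument is complete and suffices for every later application in the paper, all of which use only the on-diagonal/ultracontractive estimates.
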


\begin{theorem}\label{Thm: functional ineq A2}
Suppose that $(\M,g)$ is an $n$-dimensional Riemannian manifold satisfying {\em (A2)} with $n\geq 1$. Then the followings hold true.
\begin{itemize} 
\item[{\em (i)}] {\em (Heat kernel Gaussian upper bound):} For some $C,A>0$,  the heat kernel satisfies
$$
p_\sigma(x,y,t) \leq C \max\{1,t^{-n/2\sigma}\}  e^{-d^2(x,y)/At},\quad x,y\in \M,\, t>0.
$$
\item[{\em (ii)}]{\em  (Ultracontractivity):} For some $C,A >0$,
$$
\|e^{-t (-\Delta)^\sigma} u \|_\infty \leq C \max\{1, t^{-\frac{n}{2 p\sigma }} \} e^{\frac{-d^2(x,y)}{Apt}}\|u\|_p, \quad 1\leq p<\infty.
$$
\item[{\em (iii)}] {\em (Nash inequality):} When $n\geq 3$, for some $C >0$,
$$
\|u-\overline{u}\|_2^{1+2\sigma/n} \leq C 2^{1+2\sigma/n} \| (-\Delta)^{\sigma/2} u \|_2 \| u-\overline{u}\|_1^{2\sigma/n}, \quad u \in D((-\Delta)^{\sigma/2}),
$$
where $\displaystyle \overline{u}:= \frac{1}{{\rm vol}(\M) } \int_M u \, d\mu_g$   with   ${\rm vol}(\M)$   being the total volume of $(\M,g)$.
\item[{\em (iv)}] {\em (Sobolev-Poincar\'e  inequality):} When $n\geq 3$, for some $\widetilde{C} >0$,
$$
\| u -\overline{u}\|_{2n/(n-2\sigma)}  \leq \widetilde{C}\| (-\Delta)^{\sigma/2} u \|_2  , \quad u \in D((-\Delta)^{\sigma/2}).
$$
\item[{\em (v)}] {\em (Super Poincar\'e inequality ):} When $n\geq 3$, for any $r>0$
$$
\|u-\overline{u}\|_2^2 \leq r \|u-\overline{u}\|_1^2 + \beta(r) \| (-\Delta)^{\sigma/2} u \|_2^2, \quad u \in D((-\Delta)^{\sigma/2}),
$$
where $\beta: (0,\infty)\to (0,\infty)$ is a decreasing function.
\item[{\em (vi)}] {\em (Logarithmic Sobolev inequality):} There exist constants $M_0=M_0(\widetilde{C})>0$ and $M_1=M_1(\widetilde{C})>0$   such that for all $u\in D((-\Delta)^{\sigma/2})$ and $\varepsilon>0$
$$
\int_\M |u|^2 \ln(\frac{|u|}{\|u\|_2 })^2\, d\mu_g \leq  \frac{n}{2\sigma}( \|u\|_2^2 \ln (\frac{1}{\varepsilon}) + M_0\varepsilon \|( -\Delta)^{\sigma/2} u\|_2^2 + M_1 \varepsilon |\overline{u}|^2 ),
$$
where  $\widetilde{C}$ is the constant in {\em (iv)}.
\end{itemize}
\end{theorem}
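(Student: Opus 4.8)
The plan is to follow the strategy behind Theorem~\ref{Thm: functional ineq A1}, with the global Faber--Krahn hypothesis replaced by the combination of the \emph{local} Euclidean-type heat kernel behaviour valid on any closed manifold and the \emph{spectral gap} $\lambda_1>0$ afforded by compactness. The starting point is the subordination representation from Section~\ref{Section 3}: writing $p_1$ for the heat kernel of $\Delta$ and $\eta_t^\sigma$ for the density of the $\sigma$-stable subordinator, one has $p_\sigma(x,y,t)=\int_0^\infty p_1(x,y,s)\,\eta_t^\sigma(s)\,ds$. On a closed manifold the classical Li--Yau estimate furnishes $p_1(x,y,s)\le C\max\{1,s^{-n/2}\}e^{-d^2(x,y)/As}$ for all $s>0$; inserting this into the subordination integral and splitting the $s$-integration at the natural scale $s\sim t^{1/\sigma}$ produces the bound (i), whose $\max\{1,t^{-n/2\sigma}\}$ factor reflects the two regimes (the small-$s$ part contributes the singular $t^{-n/2\sigma}$, the large-$s$ part the bounded piece). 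Part (ii) is immediate: applying H\"older's inequality to $e^{-t(-\Delta)^\sigma}u(x)=\int_\M p_\sigma(x,y,t)u(y)\,d\mu_g(y)$ and estimating $\|p_\sigma(x,\cdot,t)\|_{p'}$ via (i) gives the ultracontractivity.

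For the four remaining inequalities the crucial point is that they all concern the mean-zero part $u-\overline u$, which lies in the orthogonal complement $L_2^0(\M)$ of the constants. Because $\M$ is compact, $-\Delta$ has a strictly positive first nonzero eigenvalue $\lambda_1$, so on $L_2^0(\M)$ the subordinated semigroup satisfies the spectral-gap bound $\|e^{-t(-\Delta)^\sigma}u\|_2\le e^{-\lambda_1^\sigma t}\|u\|_2$. Combining this exponential large-time decay with the small-time bound of (ii) and the factorization $T_t=T_{1/2}T_{t-1}T_{1/2}$ (with $T_t:=e^{-t(-\Delta)^\sigma}$) upgrades the ultracontractivity on $L_2^0(\M)$ to the clean estimate $\|e^{-t(-\Delta)^\sigma}u\|_\infty\le C\,t^{-n/2\sigma}\|u\|_1$ for \emph{all} $t>0$, since the exponential decay dominates the polynomial tail for large $t$. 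Feeding this into the Carlen--Kusuoka--Stroock equivalence between ultracontractivity and Nash-type inequalities for symmetric Markov semigroups---available here precisely because subordination preserves the Markovian structure---yields the Nash inequality (iii) for $u-\overline u$; the hypothesis $n\ge 3$ guarantees $n-2\sigma>0$ and hence the admissibility of the exponent $2n/(n-2\sigma)$, since $\sigma<1$.

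The Sobolev--Poincar\'e inequality (iv) is then extracted from the mean-zero Nash inequality by the classical ``Nash $\Rightarrow$ Sobolev'' implication, and the super Poincar\'e inequality (v) by optimizing the Nash inequality over a free parameter, a procedure well known to produce a decreasing $\beta$. Finally, the logarithmic Sobolev inequality (vi) follows from (iv) by the entropy--energy method: applying Jensen's inequality together with the elementary bound $\ln s\le\varepsilon s-\ln\varepsilon-1$ to $\int_\M|u|^2\ln(|u|/\|u\|_2)^2\,d\mu_g$ and inserting the Sobolev estimate converts the embedding into the stated logarithmic form, with $M_0,M_1$ depending only on $\widetilde C$; the additional term $M_1\varepsilon|\overline u|^2$ records the cost of passing from the mean-zero part $u-\overline u$ back to $u$.

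The principal obstacle is the nonlocality of $(-\Delta)^\sigma$: since this operator is defined through functional calculus and subordination rather than a local differential expression, the customary local-to-global patching of functional inequalities is unavailable, and every estimate must instead be transported through the subordination integral while simultaneously controlling the small-$s$ Euclidean regime responsible for the $t^{-n/2\sigma}$ singularity and the large-$s$ regime governed by the spectral gap. The technical crux is to verify that the mean-zero reduction is compatible with the fractional powers---namely that $(-\Delta)^{\sigma/2}$ annihilates constants and commutes with the orthogonal projection onto $L_2^0(\M)$---so that replacing $u$ by $u-\overline u$ leaves the energy $\|(-\Delta)^{\sigma/2}u\|_2$ unchanged while genuinely improving the semigroup decay.
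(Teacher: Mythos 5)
Your proposal inverts the paper's architecture. The paper proves the fractional Nash inequality (iii) first, by transporting the classical Nash inequality on a closed manifold through subordination (the Phillips formula \eqref{Phillips formula} plus the ODE-comparison argument of \cite{SchiWang12}); it then gets (v) from (iii) by Young's inequality, (iv) from \eqref{S3.3: super poincare-2}, \eqref{S3.3: weak poincare} and the weak-Poincar\'e criterion \cite[Proposition~1.3]{RockWang01} combined with the embedding $D((-\Delta)^{\sigma/2})\doteq H^\sigma_2(\M)\hookrightarrow L_{2n/(n-2\sigma)}(\M)$, (vi) from (iv) by the Jensen argument, and only then deduces (i)--(ii) from (iii). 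Reversing the order is not by itself a problem, but your very first step has a genuine gap: \emph{subordination does not preserve Gaussian bounds}. The $\sigma$-stable subordinator density has heavy tails, $\eta^\sigma_t(s)\approx t\, s^{-1-\sigma}$ for $s\geq t^{1/\sigma}$, so in $p_\sigma(x,y,t)=\int_0^\infty p_1(x,y,s)\,\eta^\sigma_t(s)\,ds$ the region $s\approx d^2(x,y)$, where the Li--Yau upper bound is itself of size $d(x,y)^{-n}$ (and two-sided bounds make $p_1$ comparable to this), contributes a term of order $t\,d(x,y)^{-n-2\sigma}$. For fixed $x\neq y$ this is linear in $t$ as $t\to 0^+$, while the claimed bound $C t^{-n/2\sigma}e^{-d^2(x,y)/At}$ tends to zero exponentially fast; hence no splitting of the $s$-integral can produce the Gaussian factor --- what comes out of subordination is the on-diagonal factor $\max\{1,t^{-n/2\sigma}\}$ together with stable-like polynomial tails. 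The paper's mechanism for the Gaussian factor is entirely different: it first proves $\|e^{-t(-\Delta)^\sigma}u\|_\infty\leq C\max\{1,t^{-n/2\sigma}\}\|u\|_1$ (via (iii), the resulting $L_1\to L_2$ decay on mean-zero functions, and a duality argument) and then invokes \cite[Theorem~3.1]{Grig97} to upgrade the on-diagonal bound to a Gaussian one. Since in your scheme (ii) is derived from (i), and (iii)--(vi) from (ii), this gap propagates through the entire proof.

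Second, even granting (i)--(ii), you apply Theorem~\ref{VSC Thm} and the Nash$\,\Rightarrow\,$Sobolev implication to the semigroup restricted to the mean-zero subspace. That restriction is not a Markov semigroup --- positivity is vacuous on $\{\overline{u}=0\}$ --- so the equivalence cannot be cited as a black box. The direction ultracontractivity$\,\Rightarrow\,$Nash does survive restriction (its proof uses only symmetry, the $L_1$-contractivity of the full semigroup, and invariance of the mean-zero subspace), but the Varopoulos direction Nash$\,\Rightarrow\,$Sobolev, which is exactly what you need for (iv), is the delicate one and would have to be re-proved on the invariant subspace. The paper sidesteps this entirely: (iv) is not obtained from (iii) at all, but from the super and weak Poincar\'e inequalities via \cite[Proposition~1.3]{RockWang01} together with the identification $D((-\Delta)^{\sigma/2})\doteq H^\sigma_2(\M)$ and the embedding \eqref{S3.3: Domain embedding}, available on a closed manifold through bounded imaginary powers and complex interpolation. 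Your spectral-gap bound $\|e^{-t(-\Delta)^\sigma}u\|_2\leq e^{-\lambda_1^\sigma t}\|u\|_2$ on mean-zero functions and the observation that $(-\Delta)^{\sigma/2}$ annihilates constants are both correct and give a nice large-time shortcut, but neither repairs the two gaps above.
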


Concerning the global well-poesdness of \eqref{S1: FPME}, the followings hold.
 
\begin{theorem}
\label{S4: global weak sol}
Suppose that $(\M,g)$ is a   complete Riemannian manifold, $\sigma\in (0,1)$ and $m>0$.
For every $u_0\in  L_1(\M) \cap L_{m+1}(\M)$, \eqref{S1: FPME} has a unique  strong solution in the sense of Definition~\ref{Def: solution}.
%If, in addition $m=1$, for any $u_0\in  L_1(\M) \cap L_\infty(\M)$
Additionally, the solution satisfies
\begin{itemize}
\item[{\em (I)} ] {\em Comparison principle: }If $u, \hat{u}$ are the unique strong solutions to \eqref{S1: FPME} with initial data $u_0, \hat{u}_0$, respectively, then $u_0\leq \hat{u}_0$ a.e. implies $u(t) \leq \hat{u}(t)$ a.e. for all $t\geq 0$.
\item[{\em (II)} ] {\em $L_p$-contraction: } If $u_0\in L_1(\M) \cap L_q(\M)$ with $q\in [m+1,\infty]$, then for all $0 \leq t_1 \leq t_2$ and $1\leq p\leq q$
$$
\|u(t_2)\|_p \leq \|u(t_1)\|_p.
$$
\item[{\em (III)} ] {\em Conservation of mass: } When ${\rm vol}(\M)<\infty$, for  all $t\geq 0$, it holds  that
$$
\int_\M u(t) \, d\mu_g = \int_\M u_0 \, d\mu_g.
$$
\end{itemize}
\end{theorem}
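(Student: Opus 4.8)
The plan is to recast \eqref{S1: FPME} as the abstract Cauchy problem $u'+\mathcal{A}u=0$ in $L_1(\M)$, where $\mathcal{A}u:=(-\Delta)^\sigma\Phi(u)$ and $\Phi(s):=|s|^{m-1}s$, and to generate the solution semigroup via the Crandall--Liggett theorem for $m$-accretive operators. The only structural ingredient is that $(-\Delta)^\sigma$ is the generator of a symmetric, \emph{submarkovian} semigroup---equivalently, it is associated with a symmetric Dirichlet form $\mathcal{E}$ satisfying the Beurling--Deny conditions---which is precisely the output of the subordination construction of Section~\ref{Section 3}. Because this input requires no curvature, volume, or compactness hypothesis, the scheme applies verbatim on an arbitrary complete $(\M,g)$, which is what removes the finite-volume restriction of our earlier work.

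The core step is the resolvent (implicit Euler) problem: for $\lambda>0$ and $f\in L_1(\M)\cap L_{m+1}(\M)$, solve $u+\lambda(-\Delta)^\sigma\Phi(u)=f$. Writing $v:=\Phi(u)$ and $\beta:=\Phi^{-1}$, so that $\beta(s)=|s|^{1/m}\mathrm{sign}(s)$, this becomes the strictly monotone elliptic equation $\beta(v)+\lambda(-\Delta)^\sigma v=f$, which I would solve by minimizing the strictly convex functional $v\mapsto\frac{\lambda}{2}\mathcal{E}(v,v)+\int_\M\widehat\beta(v)\,d\mu_g-\int_\M fv\,d\mu_g$, with $\widehat\beta'=\beta$, over $D((-\Delta)^{\sigma/2})\cap L_{(m+1)/m}(\M)$; here the term $\int_\M\widehat\beta(v)\,d\mu_g$ is coercive of order $|v|^{(m+1)/m}$ and the source is controlled by the duality $L_{m+1}$--$L_{(m+1)/m}$, which is exactly why $u_0\in L_{m+1}$ enters. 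The resolvent contraction and order estimates then come from the Markovian structure: subtracting two resolvent identities and testing against a smooth increasing approximation of $\mathrm{sign}(v_1-v_2)$, the Kato/Stroock--Varopoulos inequality $\int_\M(-\Delta)^\sigma v\cdot p(v)\,d\mu_g\ge 0$---valid for increasing normal contractions $p$ because $\mathcal{E}$ is Markovian---yields $\|u_1-u_2\|_1\le\|f_1-f_2\|_1$, the comparison $f_1\le f_2\Rightarrow u_1\le u_2$, and the bounds $\|u\|_p\le\|f\|_p$ for every $p\in[1,\infty]$.

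Granting these, $\mathcal{A}$ is $m$-accretive in $L_1(\M)$ with $\overline{D(\mathcal{A})}=L_1(\M)$, and Crandall--Liggett produces a contraction semigroup $S(t)$; the unique mild solution is $u(t):=S(t)u_0$. Passing the resolvent estimates through the exponential formula gives uniqueness together with the comparison principle (I) and the $L_p$-contraction (II). For conservation of mass (III) I would use that, when ${\rm vol}(\M)<\infty$, the constant $\mathbf{1}\in L_2(\M)$ lies in the form domain with $(-\Delta)^{\sigma/2}\mathbf{1}=0$, so pairing the equation with $\mathbf{1}$ gives $\frac{d}{dt}\int_\M u\,d\mu_g=-\mathcal{E}(\Phi(u),\mathbf{1})=0$. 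To upgrade the mild solution to a strong solution in the sense of Definition~\ref{Def: solution} for $u_0\in L_1(\M)\cap L_{m+1}(\M)$, I would exploit that $\Phi$ is positively homogeneous of degree $m$, so $\mathcal{A}$ is homogeneous of degree $m\ne 1$; the B\'enilan--Crandall regularizing estimate then gives $\|\frac{d}{dt}u(t)\|_1\le\frac{2}{|m-1|\,t}\|u_0\|_1$, whence $t\mapsto u(t)$ is locally Lipschitz into $L_1$ on $(0,\infty)$, a.e.\ differentiable with $u(t)\in D(\mathcal{A})$, and satisfies the equation in $L_1$ for a.e.\ $t$ (the case $m=1$ being the linear submarkovian semigroup). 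A companion energy estimate, from testing the regularized resolvent scheme against $\Phi(u)$, places $\Phi(u)$ in $D((-\Delta)^{\sigma/2})$ with the square-integrable-in-time control demanded by the definition.

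The step I expect to be the main obstacle is the solvability and closability behind the $m$-accretivity in infinite volume. Without a spectral gap or a compact Sobolev embedding---both unavailable under the sole assumption of completeness---the coercivity of the minimization and the identification of the nonlinear resolvent as the inverse of $I+\lambda\mathcal{A}$ must be argued through an approximation of the possibly degenerate or singular graph $\beta$ (for $m\ne 1$) by globally Lipschitz increasing truncations, with the limit passage controlled entirely by the $L_1$--$L_p$ a priori estimates above. Keeping these estimates uniform as both the truncation and an exhaustion of $\M$ are removed is the delicate point.
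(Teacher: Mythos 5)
Your overall strategy --- nonlinear semigroup theory in $L_1(\M)$ with the Markovian structure of $(-\Delta)^\sigma$ (via subordination) as the only geometric input, Crandall--Liggett for mild solutions, and B\'enilan--Crandall homogeneity estimates for regularization --- is the same as the paper's. The genuine divergence is how the resolvent equation $u+\lambda(-\Delta)^\sigma\Phi(u)=f$ is handled, and this is where your first gap lies. You attack it directly by convex minimization; the paper never solves this problem on an infinite-volume manifold. Instead it perturbs the operator to $\cA(u)=[\omega+(-\Delta_1)^\sigma]\Phi(u)$ with $\omega>0$, where $\omega+(-\Delta_1)^\sigma$ is boundedly invertible in $L_1(\M)$ (Proposition~\ref{S2: Lap-frac invert}), so that the Br\'ezis--Strauss theorem \cite[Theorem~1]{BreStr73} delivers solvability and the $L_1$-contraction and order estimates (Proposition~\ref{S4: semilinear thm}, Proposition~\ref{Prop: m-accretive}); Crandall--Liggett is then run for each fixed $\omega>0$, and the limit $\omega\to0^+$ is taken at the level of the \emph{evolution} problem, by showing $(u_{\omega_k})_k$ is Cauchy in $C([0,T],L_1(\M))$ (Section~\ref{Section 5.2}). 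This design exists precisely to sidestep what you yourself call ``the delicate point'': your minimizer $v$ lies in $D((-\Delta)^{\sigma/2})\cap L_{(m+1)/m}(\M)$, hence $u=\beta(v)\in L_{m+1}(\M)$, but on an infinite-volume manifold nothing yet places $u$ in $L_1(\M)$ or $(-\Delta)^\sigma v$ in $L_1(\M)$, which is what membership in the $L_1$-operator --- and hence $m$-accretivity and the range condition --- requires. (Coercivity, incidentally, is not the issue: the $\widehat\beta$ term dominates the source term by Young's inequality.) The sign-testing estimates you invoke can plausibly produce $\|u\|_1\le\|f\|_1$, but justifying those test functions for the weak Euler--Lagrange solution and identifying the variational resolvent with $(I+\lambda\cA)^{-1}$ in $L_1$ is the unresolved core of your plan; flagging it as delicate is not the same as closing it.

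The second gap is a definite error rather than an omission. You assert that the B\'enilan--Crandall bound makes $t\mapsto u(t)$ locally Lipschitz into $L_1$, ``whence a.e.\ differentiable.'' Lipschitz curves in $L_1(\M)$ need not be differentiable anywhere, because $L_1$ fails the Radon--Nikodym property (e.g.\ $t\mapsto\chi_{[0,t]}$ in $L_1(0,1)$); the estimate of \cite{BenCra81} only controls difference quotients, i.e.\ gives $u\in BV((\tau,T),L_1(\M))$. This is exactly why the paper's Proposition~\ref{Prop: exist strong sol} first derives, via Steklov averages and the energy identity, that $\partial_t(|u_\omega|^{(m-1)/2}u_\omega)\in L_{2,loc}((0,T),L_2(\M))$, and only then invokes B\'enilan--Gariepy \cite[Theorem~1.1]{BenGar95} to upgrade to $\partial_t u_\omega\in L_{\infty,loc}((0,T),L_1(\M))$; your proposal skips this mechanism entirely. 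Relatedly, your uniqueness claim only covers mild (semigroup) solutions: Theorem~\ref{S4: global weak sol} asserts uniqueness among \emph{strong} solutions in the sense of Definition~\ref{Def: solution}, and the paper proves this separately (Lemma~\ref{lem: unique strong sol}) through an $L_1$- (resp.\ $L_2$-) contraction estimate valid for arbitrary pairs of strong solutions, using Lemma~\ref{S2: general S-V ineq} and \cite[Theorem~6.2]{PalRodVaz12} when $m\neq1$; the exponential formula alone does not show that an arbitrary strong solution coincides with the Crandall--Liggett one.
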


The precise definition of strong solutions can be found in Definition~\ref{Def: solution}.

When $m>1$, we can prove  the asymptotic behavior of the solution and push the initial data to $L_1(\M) \cap L_2(\M)$ under  Assumption (A1) or (A2).

\begin{theorem}
\label{S5: Large time behavior}
Suppose that $(\M,g)$ is a  Riemannian manifold satisfying {\em (A1)}   and $m>1$. 
Then for every $  u_0\in  L_1(\M) \cap L_2(\M)$, \eqref{S1: FPME} has a unique strong solution $u$.
Furthermore, if $u_0\in L_1(\M) \cap L_p(\M)$ with  $p\in [2,\infty)$, then $u$ satisfies  
$$
\| u(t)\|_\infty \leq \frac{e^R}{t^\alpha}\|u_0\|_p^\gamma
$$
for some $R=R(p,\sigma,m,n,\widehat{C})>0$, $\displaystyle \alpha=\frac{n}{ 2\sigma p  + n(m-1)}$ and $\displaystyle \gamma=\frac{2\sigma p }{2\sigma p  + n(m-1)}$, where $\widehat{C}$ is the constant in Theorem~\ref{Thm: functional ineq A1}(iv). %\eqref{S2: Sob ineq frac}.
\end{theorem}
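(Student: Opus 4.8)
The plan is to first upgrade the existence part of Theorem~\ref{S4: global weak sol} to the class $L_1(\M)\cap L_2(\M)$, and then to run a Gross--Carlen--Loss/Bonforte--Grillo type argument driven by the logarithmic Sobolev inequality of Theorem~\ref{Thm: functional ineq A1}(vi).

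\emph{Existence for $L_1\cap L_2$ data.} Since $m>1$ we have $m+1>2$, so by interpolation $L_1(\M)\cap L_{m+1}(\M)\subset L_1(\M)\cap L_2(\M)$, and the target class is genuinely larger. Given $u_0\in L_1\cap L_2$, I would approximate it by the truncations $u_0^k:=\mathrm{sign}(u_0)\min\{|u_0|,k\}\in L_1\cap L_\infty\subset L_1\cap L_{m+1}$, which converge to $u_0$ in both $L_1$ and $L_2$. Theorem~\ref{S4: global weak sol} produces strong solutions $u^k$; the comparison principle (I), which for such equations comes with the $L_1$-contraction $\|u^k(t)-u^j(t)\|_1\le\|u_0^k-u_0^j\|_1\to0$, makes $(u^k)$ Cauchy in $C([0,T];L_1)$, while the $L_p$-monotonicity (II) gives the uniform bound $\|u^k(t)\|_2\le\|u_0^k\|_2\le\|u_0\|_2+o(1)$. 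Passing to the limit (strongly in $L_1$, weakly in $L_2$, uniformly in $t$) yields a strong solution in the sense of Definition~\ref{Def: solution}; uniqueness again follows from $L_1$-contraction.

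\emph{Energy dissipation and the Stroock--Varopoulos step.} For the regularizing estimate I would test the equation with $|u|^{p-2}u$ to get
\[
\frac{d}{dt}\,\frac1p\|u(t)\|_p^p=-\int_\M|u|^{p-2}u\,(-\Delta)^\sigma(|u|^{m-1}u)\,d\mu_g .
\]
Writing $f:=|u|^{m-1}u$ and noting $|u|^{p-2}u=|f|^{r-2}f$ with $r:=(p+m-1)/m$, the Stroock--Varopoulos inequality for the Markovian Dirichlet form of $(-\Delta)^\sigma$ (available from the subordination/Markov structure of Section~\ref{Section 3}) gives
\[
\int_\M|f|^{r-2}f\,(-\Delta)^\sigma f\,d\mu_g\ \ge\ \frac{4m(p-1)}{(p+m-1)^2}\,\big\|(-\Delta)^{\sigma/2}z\big\|_2^2,\qquad z:=|u|^{(p+m-1)/2}.
\]
Thus the dissipation controls the Dirichlet energy of $z$, which is precisely the function to which the functional inequalities of Theorem~\ref{Thm: functional ineq A1} apply.

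\emph{Gross integration with an evolving exponent.} Letting $p=p(t)$ increase and differentiating $\ln\|u\|_{p(t)}$ produces, besides the negative dissipation above, the entropy term $\tfrac{\dot p}{p^2}\,\mathrm{Ent}_p(u)/\|u\|_p^p$, where $\mathrm{Ent}_q(u):=\int_\M|u|^q\ln(|u|^q/\|u\|_q^q)\,d\mu_g$. The entropy that (vi) naturally controls is that of $z^2=|u|^{p+m-1}$, not of $|u|^p$; the two are reconciled by the identity $\mathrm{Ent}_q(u)/\|u\|_q^q=q^2\frac{d}{dq}\ln\|u\|_q$ together with the log-convexity of $q\mapsto\int_\M|u|^q$, which makes $q\mapsto\mathrm{Ent}_q(u)/\|u\|_q^q$ nondecreasing. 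Since $m>1$ forces $p+m-1>p$, this yields $\mathrm{Ent}_p(u)/\|u\|_p^p\le\mathrm{Ent}_{p+m-1}(u)/\|u\|_{p+m-1}^{p+m-1}$, so that (vi) applied to $z$ turns the entropy into $\|z\|_2^2\ln(1/\varepsilon)+\widehat C\varepsilon\|(-\Delta)^{\sigma/2}z\|_2^2$. Choosing $\varepsilon=\varepsilon(t)$ so that the positive multiple of $\|(-\Delta)^{\sigma/2}z\|_2^2$ it contributes is exactly absorbed by the dissipation leaves the pure differential inequality $\frac{d}{dt}\ln\|u\|_{p(t)}\le\frac{n}{2\sigma}\frac{\dot p}{p^2}\ln\frac1{\varepsilon(t)}$. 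Integrating from $0$ to $t$ while sending $p(t)\to\infty$ (so $\|u\|_{p(t)}\to\|u\|_\infty$) and optimizing the schedule $p(t)$ should reproduce $\alpha=\frac{n}{2\sigma p+n(m-1)}$, $\gamma=\frac{2\sigma p}{2\sigma p+n(m-1)}$, and the constant $e^R$.

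\emph{Main obstacle.} The delicate point is the final step: after the choice of $\varepsilon(t)$, the resulting $\ln(1/\varepsilon(t))$ still couples $\|u\|_{p+m-1}$ to $\|u\|_p$, so closing the ODE and extracting the \emph{sharp} exponents requires a careful choice of the schedule $p(t)$ and repeated use of the $L_p$-monotonicity (II) to bound the norm ratios, the excess being collected into $R=R(p,\sigma,m,n,\widehat C)$. I expect the bookkeeping of these constants — rather than any single inequality — to be the crux; the structural inputs (Stroock--Varopoulos, the entropy-monotonicity that makes the hypothesis $m>1$ essential, and the logarithmic Sobolev inequality) are exactly those isolated above.
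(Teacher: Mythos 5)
Your smoothing mechanism is in substance the one the paper uses (following Bonforte--Grillo): Stroock--Varopoulos with the constant $\tfrac{4m(p-1)}{(p+m-1)^2}$, the logarithmic Sobolev inequality of Theorem~\ref{Thm: functional ineq A1}(vi), the monotonicity of $q\mapsto \mathrm{Ent}_q(u)/\|u\|_q^q$ (exactly where $m>1$ enters, via Proposition~2.6(d) of Bonforte--Grillo), an evolving exponent with $r(s)=pt/(t-s)$, and integration of the resulting linear ODE. Nevertheless, your write-up has two genuine gaps.

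First, the ordering of your two steps is circular. The limiting argument you describe for $u_0\in L_1(\M)\cap L_2(\M)$ (Cauchy in $C([0,T];L_1(\M))$ plus a uniform weak $L_2$ bound) does not produce a \emph{strong} solution: for $m>1$ the map $\Phi(u)=|u|^{m-1}u$ is not globally Lipschitz, so $L_1$-convergence of $u^k$ alone does not allow passage to the limit in $\Phi(u^k)$, nor does it yield the regularity demanded by Definition~\ref{Def: solution}, namely $\partial_t u,\,(-\Delta)^\sigma\Phi(u)\in L_{\infty,loc}((0,\infty),L_1(\M))$ and $(-\Delta)^{\sigma/2}\Phi(u)\in L_{2,loc}((0,\infty),L_2(\M))$. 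Moreover the energy estimate \eqref{S4: unif est 3} requires $u_0\in L_{m+1}(\M)$, which an $L_1\cap L_2$ datum need not satisfy. The paper resolves this by proving the regularizing estimate \emph{first}, for $u_0\in L_1(\M)\cap L_\infty(\M)$ (where Theorem~\ref{S4: global weak sol} already supplies strong solutions), and only then running the approximation: the bound \eqref{S5: bound for u} gives uniform $L_\infty$ bounds for the approximants $u_k$ on $[\tau,\infty)$ for each $\tau>0$, hence uniform $L_{m+1}$ and Dirichlet-energy bounds from time $\tau$ onward, and these are what permit passing to the limit in the weak formulation and verifying, via the proof of Proposition~\ref{Prop: exist strong sol}, that the limit is a strong solution. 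As written, your existence step silently uses the estimate you intend to prove afterwards; the plan must be reordered to close.

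Second, your derivation of the dissipation identity tests \eqref{S1: FPME} directly with $|u|^{p-2}u$, which presupposes that $f=\Phi(u)$ lies in $D\bigl((-\Delta_{r})^\sigma\bigr)$ with $r=(p+m-1)/m$, so that $\int_\M |f|^{r-2}f\,(-\Delta)^\sigma f\, d\mu_g$ is meaningful. The paper points out explicitly that this is not known in general, and for precisely this reason performs the computation on the implicit time discretization \eqref{S4: DPME} of the perturbed problem \eqref{S4-FPME-omega}, where the scheme guarantees $(-\Delta)^\sigma\Phi(u_{n,k;\omega})\in L_1(\M)\cap L_\infty(\M)$, and then passes $n\to\infty$ (with Lemma~\ref{S4: Lyapunov stable semigroup}-type stability and interpolation) and finally $\omega\to 0^+$, keeping every constant uniform in $n$ and $\omega$. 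This regularization-by-discretization is not cosmetic bookkeeping: it is the device that legitimizes the formal computation $\frac{d}{dt}\frac1p\|u(t)\|_p^p=-\int_\M|u|^{p-2}u\,(-\Delta)^\sigma\Phi(u)\,d\mu_g$ and the subsequent Stroock--Varopoulos step, and some such justification is required before the Gross-type integration can even begin.
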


\begin{theorem}
\label{S6: Strong solution L 2 initial data}
Suppose that $(\M,g)$ is a  Riemannian manifold satisfying {\em (A2)}  and $m>1$. Let $m_0=\max\{m-1,1\}$.
Then for every $ u_0\in     L_2(\M)$, \eqref{S1: FPME} has a unique strong solution $u$.
Furthermore, if $u_0\in L_p(\M)$ with  $p\in [2,\infty)$, then $u$ satisfies  
$$
\| u(t)\|_\infty \leq C \frac{e^{E \|u_0\|_{m_0}^{m-1}  t }}{t^\alpha}\|u_0\|_p^\gamma
$$
for some $\alpha=\alpha(p,\sigma,m,n)>0$, $C=C(p,\sigma,m,n,M_0)>0$  and 
$\gamma=(\frac{p}{p+m-1})^{n/2\sigma}$, $E=\frac{4m M_1}{M_0} $, where $M_0,M_1$ are the constants in Theorem~\ref{Thm: functional ineq A2}(vi).
Moreover,
$$
\lim\limits_{t\to \infty} \| u(t)-  \frac{1}{{\rm vol}(\M)} \int_\M u_0 \, d\mu_g \|_q =0 ,\quad 1\leq q<\infty.
$$
In particular, when  $\int_\M u_0\, d\mu_g=0$, for any  $\varepsilon\in (0,1)$ and $t>2$, it holds
$$
\|u(t)\|_\infty \leq \frac{C \|u_0\|_p^{\varepsilon \gamma} }{[B (t-1)]^{\gamma(1-\varepsilon)/(m-1)}} 
$$
for some $B=(m,p)>0$ and $C=C(p,\sigma,m,n,M_0,M_1) >0$.
\end{theorem}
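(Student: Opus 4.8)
The plan is to combine the Gross-type mechanism linking logarithmic Sobolev inequalities to ultracontractivity, adapted to the nonlinear flow in the spirit of Bonforte--Grillo, with an approximation argument to lower the integrability demanded of the datum, and finally a Lyapunov/$\omega$-limit analysis for the large-time behaviour. I would first establish the regularizing estimate for bounded initial data, where the strong solution of Theorem~\ref{S4: global weak sol} is already at hand, and remove the extra integrability afterwards. Fix $u_0\in L_p(\M)\cap L_\infty(\M)$ and track a time-dependent norm $N(t)=\|u(t)\|_{q(t)}$ with an increasing exponent, $q(0)=p$, $q(t)\to\infty$. Differentiating $\ln N$ and using \eqref{S1: FPME} together with the nonlocal Dirichlet form $\mathcal{E}_\sigma(f,g)=\int_\M f\,(-\Delta)^\sigma g\,d\mu_g$ gives
\begin{equation*}
\frac{d}{dt}\ln N(t)=\frac{\dot q}{q^2 N^q}\,\mathrm{Ent}(|u|^q)-\frac{1}{N^q}\,\mathcal{E}_\sigma\big(|u|^{q-1}\mathrm{sign}(u),\,|u|^{m-1}u\big),
\end{equation*}
where $\mathrm{Ent}(|u|^q)=\int_\M |u|^q\ln\!\big(|u|^q/\!\int_\M|u|^q\big)\,d\mu_g$.

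The crucial nonlinear step is the Stroock--Varopoulos inequality, available because the subordinated semigroup is Markovian (Section~3): setting $w=|u|^{(q+m-1)/2}\mathrm{sign}(u)$, it bounds the Dirichlet term below by $c_{q,m}\|(-\Delta)^{\sigma/2}w\|_2^2$ with $c_{q,m}=4m(q-1)/(q+m-1)^2$. I would then feed the logarithmic Sobolev inequality of Theorem~\ref{Thm: functional ineq A2}(vi), applied to $w$, into the entropy term (noting $\|w\|_2^2=\int_\M|u|^{q+m-1}$ and $|w|=|u|^{(q+m-1)/2}$), optimize over the free parameter $\varepsilon$, and choose $q(t)$ so that the surviving terms integrate to a bound of the stated form with the precise $\alpha$ and $\gamma=(p/(p+m-1))^{n/2\sigma}$. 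The non-vanishing mean term $M_1\varepsilon|\overline{w}|^2$ in the compact case is handled by bounding $|\overline{w}|=\mathrm{vol}(\M)^{-1}\|u\|_{(q+m-1)/2}^{(q+m-1)/2}$ via the $L_p$-contraction of Theorem~\ref{S4: global weak sol}(II); this is exactly what produces the exponential factor $e^{E\|u_0\|_{m_0}^{m-1}t}$ with $E=4mM_1/M_0$ and $m_0=\max\{m-1,1\}$.

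To pass to general $u_0\in L_2(\M)$, I note that compactness gives $L_2(\M)\hookrightarrow L_1(\M)$, but for $m>1$ one has $L_{m+1}(\M)\subsetneq L_2(\M)$, so Theorem~\ref{S4: global weak sol} does not apply directly. I would approximate $u_0$ by $u_{0,k}\in L_\infty(\M)$ with $u_{0,k}\to u_0$ in $L_2$, solve to get $u_k$, and use the $L_1$-contraction between solutions (from the nonlinear-semigroup structure underlying Definition~\ref{Def: solution}) to show $(u_k)$ is Cauchy in $C([0,T];L_1(\M))$. The regularizing estimate with $p=2$ gives a uniform $L_\infty$ bound on $u_k(t)$ for $t\geq\delta>0$ depending only on the bounded quantity $\|u_{0,k}\|_2$, yielding enough compactness to identify the limit as the strong solution with datum $u_0$, to transfer the estimate to it, and to conclude uniqueness again from the contraction. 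For the convergence $\|u(t)-\overline{u}_0\|_q\to0$, with $\overline{u}_0=\mathrm{vol}(\M)^{-1}\int_\M u_0$, I would use the Lyapunov functional $\mathcal{F}[u]=(m+1)^{-1}\int_\M|u|^{m+1}$, whose energy identity $\frac{d}{dt}\mathcal{F}[u]=-\|(-\Delta)^{\sigma/2}(|u|^{m-1}u)\|_2^2$ forces $\int_0^\infty\|(-\Delta)^{\sigma/2}(|u|^{m-1}u)\|_2^2\,dt<\infty$. The $L_\infty$ bound makes $\{u(t):t\geq1\}$ relatively compact, so the $\omega$-limit set is nonempty; any limit point $w$ obtained along a dissipation-vanishing sequence satisfies $(-\Delta)^{\sigma/2}(|w|^{m-1}w)=0$, hence $w$ is constant, and conservation of mass (Theorem~\ref{S4: global weak sol}(III)) pins it to $\overline{u}_0$. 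Thus the $\omega$-limit set is $\{\overline{u}_0\}$, the whole trajectory converges in $L_2$, and interpolation with the $L_\infty$ bound upgrades this to every $L_q$. In the zero-mass case the limit is $0$, and I would obtain the algebraic rate by applying the regularizing estimate at $t=1$ to control $\|u(1)\|_\infty$ and then running the dissipation identity against the zero-mean Sobolev--Poincar\'e inequality of Theorem~\ref{Thm: functional ineq A2}(iv), which carries no constant-function obstruction, to derive an ODE whose integration yields the rate $[B(t-1)]^{-\gamma(1-\varepsilon)/(m-1)}$.

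The main obstacle is the regularizing estimate itself: reconciling the nonlinearity, which forces $w=|u|^{(q+m-1)/2}$ rather than $|u|^{q/2}$ to enter the logarithmic Sobolev inequality, with the moving exponent $q(t)$, and controlling the non-vanishing mean term in the compact-case inequality tightly enough to recover the exact constants $\alpha$, $\gamma$, and $E$.
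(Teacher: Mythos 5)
Your overall strategy coincides with the paper's: the smoothing bound comes from the Bonforte--Grillo moving-exponent technique fed by the compact-case logarithmic Sobolev inequality of Theorem~\ref{Thm: functional ineq A2}(vi) and the Stroock--Varopoulos inequality, the mean term is what produces the factor $e^{E\|u_0\|_{m_0}^{m-1}t}$, the passage to $u_0\in L_2(\M)$ is by the same approximation, and the convergence statement rests on a Lyapunov/$\omega$-limit analysis. However, two of your steps would fail as written. The first is the claim that ``the $L_\infty$ bound makes $\{u(t):t\geq 1\}$ relatively compact'': bounded subsets of $L_\infty(\M)$ are not relatively compact in $L_1(\M)$ or $L_2(\M)$ (rapidly oscillating functions on a torus are a counterexample), so precompactness of the trajectory is a genuine issue that boundedness alone cannot settle. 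The paper obtains it from smoothing: Lemma~\ref{S4: phi(u_omega) L infinity} shows that $\Phi(\gamma(u(\tau)))$ is bounded in $D((-\Delta)^{\sigma/2})\doteq H^\sigma_2(\M)$, Rellich--Kondrachov makes this set relatively compact in $L_1(\M)$, and the estimate $\|u-v\|_1\leq C\|\Phi(u)-\Phi(v)\|_1^{1/m}$ (valid since $\beta=\Phi^{-1}\in BC^{1/m}(\R)$ for $m>1$) transfers the compactness back to the trajectory of $u$ itself. Relatedly, your dissipation-vanishing-sequence argument only identifies \emph{some} $\omega$-limit points as constants; an arbitrary sequence $t_n\to\infty$ with $u(t_n)$ convergent need not have $\|(-\Delta)^{\sigma/2}\Phi(u(t_n))\|_2\to 0$. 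To conclude that the whole trajectory converges one needs Lyapunov stability of the flow (the $L_1$-contraction of Lemma~\ref{S4: Lyapunov stable semigroup}, applied in Dafermos' framework as the paper does, together with invariance of the $\omega$-limit set), not just integrability of the dissipation.

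The second gap is in the zero-mean rate. Your plan to run the dissipation identity ``against the zero-mean Sobolev--Poincar\'e inequality of Theorem~\ref{Thm: functional ineq A2}(iv), which carries no constant-function obstruction'' does not work: the dissipation controls $\|(-\Delta)^{\sigma/2}|u|^{d/2}\|_2^2$ with $d=p+m-1$, and the function $w=|u(t)|^{d/2}$ does \emph{not} have zero mean even when $\int_\M u(t)\,d\mu_g=0$; its mean equals $\|u(t)\|_{d/2}^{d/2}/{\rm vol}(\M)$, which is strictly positive unless $u(t)\equiv 0$. Theorem~\ref{Thm: functional ineq A2}(iv) therefore only bounds $\|w-\overline{w}\|_{2n/(n-2\sigma)}$, which cannot close the ODE for $\|u(t)\|_p^p$. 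The paper instead proves the inequality $K\,\||u|^{d/2}\|_2^2\leq \|(-\Delta)^{\sigma/2}|u|^{d/2}\|_2^2$ by following the argument of Alikakos--Rostamian, in which the zero mean of $u$ itself (not of $w$) is exploited. A more minor point of the same nature: differentiating $t\mapsto\|u(t)\|_{q(t)}$ along the flow requires justification, since it is not known that $\Phi(u(t))\in D((-\Delta_{d/m})^\sigma)$; the paper derives the differential inequality \eqref{S5: der phi_r} by passing through the implicit time discretization \eqref{S4: DPME} and the regularized problem \eqref{S4-FPME-omega} rather than differentiating the PDE directly.
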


This manuscript is organized as follows.

In Section~\ref{Section 3}, we study the Markovian property of the fractional Laplacian. 
In Sections~\ref{Section 4.1} and \ref{Section 4.2}, we establish  some important inequalities for the fractional Laplacian including two nonlocal logarithmic Sobolev type inequalities. 
They are an essential ingredient of the proofs of Theorems~\ref{S5: Large time behavior} and \ref{S6: Strong solution L 2 initial data} as mentioned in the introduction.
Then in Section~\ref{Section 4.3}, we derive the remaining functional inequalities, various heat kernel and semigroup estimates for the fractional Laplacian in Theorems~\ref{Thm: functional ineq A1} and \ref{Thm: functional ineq A2}.
Section~\ref{Section 5} is devoted to the proof of Theorem~\ref{S4: global weak sol}.
Theorems~\ref{S5: Large time behavior} and \ref{S6: Strong solution L 2 initial data} are proved in Sections~\ref{Section 6} and \ref{Section 7}, respectively.
To avoid possible distractions, we collect some basic facts from the Markov  semigroup and the non-linear semigroup theories in Appendix~\ref{Section A} and prove the $m$-accretivity of a perturbed fractional Laplacian.

%%%%%%%%%%%%%%%%%%%%%%%%%%%%%%%%%%%%%%%%%%%%%%%%%%%%%%%%

\section{\bf Subordinated Semigroups and the Fractional Laplacian}\label{Section 3}

Suppose that  $(\M,g)$ is a complete Riemannian manifold. Then the Laplace-Beltrami operator
$$
\Delta u =\div \nabla u
$$
is essentially self-adjoint on $C_c^\infty(\M)$, cf. \cite[Theorem~5.2.3]{Dav89}. The unique self-adjoint extension, i.e. the Friedrichs extension, will still be denoted by $\Delta$. 
It is a well-known result that $\Delta$ generates a symmetric Markov semigroup $\{e^{t\Delta}\}_{t\geq 0}$ in $L_2(\M)$, c.f. Definition~\ref{Def: Mark semigroup} and \cite[Theorem~5.11]{Grig09}.

To introduce the semigroup generated by the fractional powers of $ -\Delta$, we will need some concepts from subordinated semigroup theory.
\begin{definition}
A Bernstein function $g$ is a smooth function $:(0,\infty)\to [0,\infty)$ such that
$$
(-1)^{n-1} g^{(n)}(x) \geq 0
$$
for all $n\in\N $ and $x>0$.
\end{definition}
Standard examples of Bernstein functions include
$$
1-e^{-x}, \quad \ln(1+x),\quad x^\sigma \text{ with }\sigma\in (0,1).
$$
Following \cite[Theorem~3.2]{SchiSongVon12}, a smooth function $g:(0,\infty)\to [0,\infty)$ is a Bernstein function iff it admits a representation:
\begin{equation}
\label{S3: Bernstein}
g(x)=a+b x + \int_0^\infty (1-e^{-tx}) \nu(dt),
\end{equation}
where $a,b \geq 0$ are constants and  $\nu$ is a measure on $(0,\infty)$ satisfying 
$$
\int_0^\infty (1 \wedge t) \nu(dt)<\infty.
$$
The  triplet $(a,b,\nu)$ defines $g$ uniquely and vice versa.
The measure $\nu$ and the triplet $(a,b,\nu)$ in \eqref{S3: Bernstein}
are called the {\em L\'evy measure} and the {\em L\'evy triplet} of the Bernstein
function $g$.  

There is another way to characterize a Bernstein function $g$. Given a  convolution semigroup of sub-probability measure $\{\mu_t\}_{t>0}$ on $[0,\infty)$. Then there exists a unique Bernstein function $g$ such that the Laplace transform of $\{\mu_t\}_{t>0}$ satisfies
\begin{equation}
\label{S3: Laplace transform}
\int_0^\infty e^{-sx} \, d\mu_t(s)= e^{-t g(x)} .
\end{equation}
Conversely, given a Bernstein function $g$, there exists a unique convolution semigroup of sub-probability measures $\{\mu_t\}_{t>0}$ on $[0,\infty)$ such that \eqref{S3: Laplace transform} holds, cf. \cite[Theorem~5.2]{SchiSongVon12}.

Suppose that  $ A: D(A)\subset L_2(\M) \to L_2(\M)$ is a non-negative self-adjoint operator, which generates a Markov  semigroup  $\{e^{-t A} \}_{t\geq 0}$. Given a Bernstein function $g$ and its corresponding convolution semigroup of sub-probability measures $\{\mu_t\}_{t>0}$, then the Bochner integral
\begin{equation}
\label{S2: def subordinate semigroup}
e^{-t g(A)} u =  \int_0^\infty e^{-sA} u\, d\mu_t(s),\quad u\in L_2(\M) 
\end{equation}
defines again a symmetric Markov semigroup, cf. \cite[Proposition~12.1]{SchiSongVon12}.
This semigroup is called {\em subordinate} (in the sense of Bochner) to the semigroup $\{e^{-t A} \}_{t\geq 0}$ with respect to the Bernstein function $g$.
Its infinitesimal generator $g(A) $ is given by the Phillips formula
$$
-g(A)= -a u +bA u + \int_0^\infty (e^{-s A}u-u)\, \nu(ds),
$$
where $(a,b,\nu)$ is the {\em L\'evy triplet} of  $g$, cf \cite[Theorem~12.6]{SchiSongVon12}.

In the sequel, we will focus on the case $g(x)=x^\sigma$  for $x>0$ and $\sigma\in (0,1)$.
This is a Bernstein function with {\em L\'evy triplet}
$$
a=b=0,\quad  \nu(dt)= \frac{\sigma}{  \Gamma(1-\sigma)} t^{-\sigma-1} dt,
$$
which gives
\begin{equation}
\label{Phillips formula}
g(-\Delta)u= ( -\Delta)^\sigma u= \frac{ \sigma}{\Gamma(1-\sigma)} \int_0^\infty (u-e^{t \Delta} u) t^{-\sigma-1} dt ,\quad u\in D(\Delta).
\end{equation}
This definition    is equivalent to Balakrishnan's  formula for the fractional power of a dissipative operator:
\begin{align}
\label{S3: Balakrishnan}
(  -\Delta)^\sigma u: = -\frac{\sin(\pi\sigma)}{\pi} \int_0^\infty t^{\sigma-1}   \Delta    (t  -\Delta)^{-1}u \, dt ,\quad u\in D(\Delta).
\end{align}
See \cite[Section 12.2]{SchiSongVon12}.

By \eqref{S2: def subordinate semigroup}, we immediately obtain the following proposition concerning $( -\Delta)^\sigma$.
\begin{prop}\label{S2: Markov semigroup frac}
Given any $\omega\geq 0$, $\Delta-\omega$ generates a symmetric Markov semigroup on $L_2(\M)$. So does $-( -\Delta)^\sigma$.
\end{prop}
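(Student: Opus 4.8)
The plan is to dispatch both assertions by the subordination machinery recalled in this section, using that $\{e^{t\Delta}\}_{t\ge 0}$ is already a symmetric Markov semigroup and that $\Delta$ is the non-positive self-adjoint Friedrichs extension.

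\textbf{First assertion.} For fixed $\omega\ge 0$ the operator $\Delta-\omega$ is again self-adjoint, being a real scalar shift of $\Delta$, and since $\omega$ is a bounded multiple of the identity commuting with $\Delta$, it generates the semigroup $T_\omega(t):=e^{t(\Delta-\omega)}=e^{-\omega t}e^{t\Delta}$. I would then verify the defining properties of a symmetric Markov semigroup (Definition~\ref{Def: Mark semigroup}) one by one: strong continuity and symmetry in $L_2(\M)$ are inherited from $\{e^{t\Delta}\}$ because $e^{-\omega t}$ is a positive real scalar; positivity preservation holds since $e^{-\omega t}>0$; and the $L_\infty$-contractivity follows from $\|T_\omega(t)u\|_\infty=e^{-\omega t}\|e^{t\Delta}u\|_\infty\le e^{-\omega t}\|u\|_\infty\le\|u\|_\infty$, using $0<e^{-\omega t}\le 1$ for $\omega,t\ge 0$ together with the sub-Markovian bound for $\{e^{t\Delta}\}$.

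\textbf{Second assertion.} Here I would simply invoke the quoted subordination result \cite[Proposition~12.1]{SchiSongVon12} with the choice $A=-\Delta$. Its hypotheses are met: $A=-\Delta$ is non-negative and self-adjoint because $\Delta$ is the non-positive self-adjoint Friedrichs extension, and $\{e^{-tA}\}_{t\ge 0}=\{e^{t\Delta}\}_{t\ge 0}$ is a symmetric Markov semigroup as recorded at the start of this section. Taking the Bernstein function $g(x)=x^\sigma$ together with its convolution semigroup of sub-probability measures $\{\mu_t\}_{t>0}$ attached to it via \eqref{S3: Laplace transform}, the subordinate semigroup
$$
e^{-tg(A)}u=\int_0^\infty e^{s\Delta}u\,d\mu_t(s),\qquad u\in L_2(\M),
$$
of \eqref{S2: def subordinate semigroup} is again a symmetric Markov semigroup; by the Phillips formula \eqref{Phillips formula} its generator equals $-g(A)=-(-\Delta)^\sigma$, which is exactly the claim. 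The very same argument, now applied to the non-negative self-adjoint operator $A=\omega-\Delta=-(\Delta-\omega)$ furnished by the first assertion, would additionally yield Markovianity of the semigroup subordinate to $\{T_\omega(t)\}$, should the shifted fractional power be needed later.

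\textbf{Main obstacle.} There is no deep difficulty: once the hypotheses are in place, both statements follow immediately from the cited theorems, so the argument is essentially bookkeeping. The only point deserving care is confirming that $-\Delta$ (and, for the remark, $\omega-\Delta$) genuinely satisfies the standing assumptions of the subordination framework — non-negativity, self-adjointness, and Markovianity of the generated semigroup — all of which we extract from the essential self-adjointness of $\Delta$ on $C_c^\infty(\M)$ and the fact that $\{e^{t\Delta}\}$ is Markov.
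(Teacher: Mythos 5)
Your proposal is correct and follows essentially the same route as the paper: the first assertion is the elementary observation that $e^{t(\Delta-\omega)}=e^{-\omega t}e^{t\Delta}$ inherits symmetry, positivity, and $L_\infty$-contractivity (since $0<e^{-\omega t}\le 1$), and the second assertion is exactly the paper's one-line argument, namely applying the subordination result \cite[Proposition~12.1]{SchiSongVon12} via \eqref{S2: def subordinate semigroup} with $A=-\Delta$ and $g(x)=x^\sigma$, with the generator identified through the Phillips formula. The only difference is that you spell out the bookkeeping the paper leaves implicit, which is harmless.
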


For $1\leq p <\infty$, there are two ways to construct the semigroups.
 
(1) First, following a standard process in \cite{Dav89}, for each $1\leq p <\infty$, one can easily show that $ \Delta|_{L_2(\M)\cap L_p(\M)}$ can be extended to the infinitesimal generator of a contraction $C_0$-semigroup on $L_p(\M)$, denoted by $\Delta_p$. 
Following \cite[Chapter~12]{SchiSongVon12} and the same procedure as above, we can define $( -\Delta_p)^\sigma$ and show that $-(  -\Delta_p)^\sigma$ still generates a contraction $C_0$-semigroup on $L_p(\M)$. Moreover, $(  -\Delta_p)^\sigma$ satisfies Balakrishnan's  formula~\eqref{S3: Balakrishnan} as well. 

(2) Second, we can begin with the symmetric Markov semigroup $\{e^{-t(-\Delta)^\sigma}\}_{t\geq 0}$.  Following again the standard process in \cite{Dav89}, one can easily show that the semigroup $\{e^{-t (-\Delta)^\sigma}|_{L_2(\M)\cap L_p(\M)}\}_{t\geq 0}$ can be extended to a contraction $C_0$-semigroup on $L_p(\M)$. We denote its infinitesimal generator by $-(-\Delta)_p^\sigma$.

Since  $\{e^{-t (-\Delta_p)^\sigma} \}_{t\geq 0}$ and $\{e^{-t (-\Delta)_p^\sigma} \}_{t\geq 0}$ coincide on a dense subspace of $L_p(\M)$, we conclude that 
$$
(-\Delta_p)^\sigma=(-\Delta)_p^\sigma.
$$
Therefore, in the sequel, we will  use the notation $(-\Delta_p)^\sigma$ exclusively.
In addition, we always adopt the convention that    $\Delta_2=\Delta$ throughout.
%Following a standard process in \cite{Dav89}, for each $1\leq p <\infty$, one can easily show that $ \Delta|_{L_2(\M)\cap L_p(\M)}$ can be extended to the infinitesimal generator of a contraction $C_0$-semigroup on $L_p(\M)$, denoted by $\Delta_p$ with  the convention that $\Delta_2=\Delta$. 
%Following the same procedure as above, we can define $(  -\Delta_p)^\sigma$ and show that $-(  -\Delta_p)^\sigma$ still generates a contraction $C_0$-semigroup on $L_p(\M)$. Moreover, $(  -\Delta_p)^\sigma$ satisfies Balakrishnan's  formula~\eqref{S3: Balakrishnan} as well. \textcolor{blue}{Check this part carefully.}

Then the following proposition is at our disposal.
\begin{prop}
\label{S2: Lap-frac invert}
For $1\leq p< \infty$, $-(-\Delta_p)^\sigma$ generates a contraction $C_0$-semigroup on $L_p(\M)$, 
and $0\in \rho(\omega+(-\Delta_p)^\sigma)$ for all $\omega>0$.
The semigroup is analytic when $1<p<\infty$.
\end{prop}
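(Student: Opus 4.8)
The plan is to treat the three assertions separately, drawing most of the work from the subordination picture already set up above.

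\emph{Generation of a contraction $C_0$-semigroup.} Here I would record that the claim is essentially the content of construction~(2) preceding the statement. By Proposition~\ref{S2: Markov semigroup frac} the family $\{e^{-t(-\Delta)^\sigma}\}_{t\ge 0}$ is a symmetric Markov semigroup on $L_2(\M)$; being Markov it is a contraction on $L_\infty(\M)$ and, by symmetry and duality, on $L_1(\M)$, while self-adjointness makes it a contraction on $L_2(\M)$. Riesz--Thorin interpolation then yields contractivity on every $L_p(\M)$, and strong continuity on $L_p(\M)$ for $p<\infty$ follows from the density of $L_2(\M)\cap L_p(\M)$ together with the uniform bound. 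Hence the restriction extends to a contraction $C_0$-semigroup on $L_p(\M)$ whose generator is, by definition, $-(-\Delta_p)^\sigma$.

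\emph{Invertibility of $\omega+(-\Delta_p)^\sigma$.} Since $-(-\Delta_p)^\sigma$ generates a contraction $C_0$-semigroup, the Hille--Yosida theorem places the entire half-line $(0,\infty)$ in the resolvent set of the generator, with resolvent estimate $\|(\omega+(-\Delta_p)^\sigma)^{-1}\|_{\mathcal{L}(L_p(\M))}\le 1/\omega$ for every $\omega>0$. In particular $\omega+(-\Delta_p)^\sigma$ is boundedly invertible, that is $0\in\rho(\omega+(-\Delta_p)^\sigma)$, for each $\omega>0$; note the strict sign $\omega>0$ is needed since $0$ sits at the bottom of the spectrum of $(-\Delta_p)^\sigma$.

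\emph{Analyticity for $1<p<\infty$.} Here I would exploit the self-adjoint structure at $p=2$ and then interpolate. On $L_2(\M)$ the generator $(-\Delta)^\sigma=(-\Delta_2)^\sigma$ is, by Proposition~\ref{S2: Markov semigroup frac}, self-adjoint and non-negative, so the spectral theorem gives $\|e^{-z(-\Delta)^\sigma}\|_{\mathcal{L}(L_2(\M))}\le 1$ for all $\mathrm{Re}\,z\ge 0$; thus the semigroup is bounded analytic of angle $\pi/2$ on $L_2(\M)$. Combining this full sector on $L_2(\M)$ with the mere contractivity on $L_1(\M)$ and $L_\infty(\M)$, I would invoke Stein's interpolation theorem for analytic semigroups to conclude that $\{e^{-t(-\Delta)^\sigma}\}$ extends to a bounded analytic semigroup on $L_p(\M)$ for every $1<p<\infty$. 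The attainable analyticity angle degenerates to $0$ as $p\to 1$ or $p\to\infty$, which is precisely why the two endpoints are excluded from the statement.

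The first two items are essentially bookkeeping on top of the subordination construction and Hille--Yosida. The real content, and the main obstacle, is the analyticity for $p\neq 2$: at $p=2$ it is free from self-adjointness, but at the endpoints $p\in\{1,\infty\}$ only contractivity is available, so everything hinges on the interpolation step. The delicate point is verifying the hypotheses of Stein's theorem, namely that the complex-time family $z\mapsto e^{-z(-\Delta)^\sigma}$ is a consistent, suitably bounded analytic family in the relevant strip; once this is in place, interpolating the $L_2$-sector against endpoint boundedness produces a genuinely positive analyticity sector on each intermediate $L_p(\M)$.
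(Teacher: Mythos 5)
Your proposal is correct and is essentially the paper's own route: the proposition is presented there as a consequence of construction~(2) together with the ``standard process'' in \cite{Dav89}, which is precisely your duality/Riesz--Thorin argument for $L_p$-contractivity and strong continuity, with the resolvent claim coming from Hille--Yosida and the analyticity for $1<p<\infty$ from Stein's interpolation theorem for symmetric Markov semigroups (this is Theorem~1.4.2 in \cite{Dav89}, the result the paper implicitly invokes). One remark: the paper's alternative construction~(1), subordination performed directly on $L_p(\M)$, combined with the classical theorem that fractional powers of exponent $\sigma<1$ of generators of bounded $C_0$-semigroups generate analytic semigroups, would yield analyticity even at $p=1$, so the endpoint degeneration you describe is an artifact of the Stein route rather than a genuine obstruction --- but this does not affect the correctness of your proof of the statement as given.
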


%%%%%%%%%%%%%%%%%%%%%%%%%%%%%%%%%%%%%%%%%%%%%%%%%%%%%%%%%%%%%%%%%%%%%%%%
\section{\bf Functional Inequalities  via Subordination}\label{Section 4}

\subsection{\bf A Logarithmic Sobolev  Inequality on  Complete Non-compact Manifolds}\label{Section 4.1}

Assume that  $(\M,g)$ is a  Riemannian manifold satisfying (A1). 
By taking $\Lambda(x)=Mx^{-2/n}$, where $M$ is the constant in (A1), and $V(t)= (\frac{2M}{n}t)^{n/2}$ in \cite[Theorem~1.1]{Grig94}, one can derive that (A1) implies that the heat kernel $p(t,x,y)$ satisfies 
\begin{equation*}
%\label{S2: Gaussian bound on mnfd}
p(t,x,y)\leq C t^{-n/2}.
\end{equation*}
This is equivalent to
\begin{equation}
\label{S2: Ultracontraction mnfd}
\| e^{t\Delta_1} u \|_\infty \leq Ct^{-n/2} \|u\|_1.
\end{equation}
%Denote by $\langle \cdot, \cdot \rangle$ the inner product in $L_2(\M)$.

We start with a theorem by  N.T. Varopoulos, L. Saloff-Coste,  and T. Coulhon for symmetric Markov semigroups.
\begin{theorem}[Theorem~II.5.2 in \cite{VarCosteCoul92}]\label{VSC Thm}
Given a symmetric Markov semigroup $\{e^{tH}\}_{t\geq 0}$ on $L_2(\M)$, when $d>2$, the following conditions are equivalent:
\begin{itemize}
\item[{\em (H)}]  $\| e^{t H} u \|_\infty \leq C t^{-d/2} \|u\|_1$ for $u\in L_1(\M)\cap L_2(\M)$.
\item[{\em (S)}]  $\|u\|^2_{2d/(d-2)} \leq C_1 \langle -H u , u \rangle$ for $u\in D(H)$.
\item[{\em (N)}] $\|u\|^{2+(4/d)}_2 \leq C_2 \|u\|_1^{4/d} \langle -H u , u \rangle$ for $u\in L_1(\M)\cap D(H)$.
\end{itemize}
\end{theorem}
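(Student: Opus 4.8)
The plan is to prove the three-way equivalence by establishing the cyclic chain $(S)\Rightarrow(N)\Rightarrow(H)\Rightarrow(S)$, so that the conditions are mutually equivalent. Two of the three steps are essentially elementary, and only the last requires genuine work; I expect that step to be the main obstacle.

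First, for $(S)\Rightarrow(N)$, I would interpolate the $L_2$-norm between $L_1$ and $L_{2d/(d-2)}$ by Hölder's inequality. A direct computation of the interpolation parameter gives $\theta=d/(d+2)$, so that $\|u\|_2\le \|u\|_1^{2/(d+2)}\,\|u\|_{2d/(d-2)}^{d/(d+2)}$; raising this to the power $2+4/d=2(d+2)/d$ and inserting $(S)$ to bound $\|u\|_{2d/(d-2)}^2$ by $\langle -Hu,u\rangle$ yields $(N)$ with $C_2=C_1$. No semigroup structure is used here. Second, for $(N)\Rightarrow(H)$, I would run Nash's classical ODE argument. Fix $u_0\in L_1\cap L_2$ with $\|u_0\|_1=1$, set $u(t)=e^{tH}u_0$ and $f(t)=\|u(t)\|_2^2$. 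Markovianity gives $L_1$-contractivity, so $\|u(t)\|_1\le 1$; differentiating, $f'(t)=2\langle Hu,u\rangle=-2\langle -Hu,u\rangle$, and $(N)$ forces $\langle -Hu,u\rangle\ge C_2^{-1}f^{1+2/d}$. The resulting differential inequality $f'\le -2C_2^{-1}f^{1+2/d}$ integrates, by monotonicity of $t\mapsto f(t)^{-2/d}$, to $f(t)\le C t^{-d/2}$, i.e. $\|e^{tH}\|_{1\to 2}\le C t^{-d/4}$. Symmetry of the semigroup dualizes this to $\|e^{tH}\|_{2\to\infty}\le C t^{-d/4}$, and the semigroup law $e^{tH}=e^{(t/2)H}e^{(t/2)H}$ composes the two factors to recover $(H)$.

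The crux is $(H)\Rightarrow(S)$, which I would carry out through the functional calculus. Writing $(S)$ equivalently as the boundedness $(-H)^{-1/2}\colon L_2\to L_{2d/(d-2)}$ via the substitution $v=(-H)^{1/2}u$, I would use the representation $(-H)^{-1/2}=\Gamma(1/2)^{-1}\int_0^\infty t^{-1/2}e^{tH}\,dt$. Riesz--Thorin interpolation between $(H)$ and $L_1$-contractivity yields $\|e^{tH}\|_{2\to\infty}\le C t^{-d/4}$, so I would split the integral at a threshold $T$, estimating the tail in $L_\infty$ and the near part in $L_2$. Choosing $T$ optimally at each level $\lambda$ produces the weak-type bound $|\{\,|(-H)^{-1/2}v|>\lambda\,\}|\le C(\|v\|_2/\lambda)^{2d/(d-2)}$, where the exponent $2d/(d-2)$ emerges precisely because $d>2$ guarantees convergence of the tail integral $\int_T^\infty t^{-1/2-d/4}\,dt$. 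Running the same splitting with $L_p$-contractivity for $p$ in a neighborhood of $2$ gives weak-type estimates on both sides of the target exponent, and Marcinkiewicz interpolation upgrades them to the strong-type bound, which is $(S)$. The \emph{delicate} points, and the reason this direction is the obstacle, are exactly the convergence condition $d>2$ and the passage from weak-type to strong-type through real interpolation.
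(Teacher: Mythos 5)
The paper does not prove this theorem at all: it is imported verbatim as Theorem~II.5.2 of the cited monograph of Varopoulos, Saloff-Coste and Coulhon, so there is no internal proof to compare against. Your reconstruction is correct and is essentially the classical argument from that source: H\"older interpolation for (S)$\Rightarrow$(N), Nash's differential-inequality iteration plus self-adjoint duality and the semigroup law for (N)$\Rightarrow$(H), and Varopoulos's ``Hardy--Littlewood--Sobolev theory of semigroups'' (splitting $(-H)^{-1/2}=c\int_0^\infty t^{-1/2}e^{tH}\,dt$ at an optimized threshold, weak-type bounds, Marcinkiewicz) for (H)$\Rightarrow$(S), with the convergence of the tail integral being exactly where $d>2$ enters. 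Two cosmetic points: to reach $\|e^{tH}\|_{2\to\infty}\le Ct^{-d/4}$ you should interpolate (H) against the $L_\infty$-contraction (or interpolate against the $L_1$-contraction to get $\|e^{tH}\|_{1\to 2}$ and then dualize by symmetry--- as written, Riesz--Thorin between $(1,\infty)$ and $(1,1)$ bounds only gives maps out of $L_1$); and in passing from boundedness of $(-H)^{-1/2}$ back to (S) one should note that (H) forces $\ker H=\{0\}$ in $L_2$, so the identity $u=\Gamma(1/2)^{-1}\int_0^\infty t^{-1/2}e^{tH}(-H)^{1/2}u\,dt$ is valid on all of $D(H)$. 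Neither affects the correctness of the outline.
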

In particular, (H) and (N) are equivalent when $d>0$. 
Following Varopoulos' terminology, the number $d$ is referred to as the dimension of the semigroup  $\{e^{t H}\}_{t\geq 0}$.

Based on \eqref{S2: Ultracontraction mnfd}, we immediately have 
\begin{equation}
\label{S2: Nash ineq Delta}
\|u\|^{2+(4/n)}_2 \leq C_2 \|u\|_1^{4/n} \langle   -\Delta  u , u \rangle,\quad u\in D(\Delta).
\end{equation}
Recall that $\Delta$ generates a symmetric Markov semigroup. Choosing  $B(x)=\frac{1}{C_2}x^{2/n}$  in \cite[Theorem~1]{SchiWang12}, one can derive a Nash type inequality from \eqref{S2: Nash ineq Delta} 
\begin{equation}
\label{S2: Nash ineq frac}
\|u\|^{2+(4\sigma/n)}_2 \leq C_3 \|u\|_1^{4\sigma/n} \langle (-\Delta)^\sigma u, u \rangle = C_3 \|u\|_1^{4\sigma/n} \|( -\Delta)^{\sigma/2} u \|_2^2
\end{equation}
for some $C_3>0$ and all $u\in D((-\Delta)^\sigma)$.
Together with Theorem~\ref{VSC Thm}, this   implies  the following Sobolev-Poincar\'e type inequality
\begin{equation}
\label{S2: Sob ineq frac}
\|u\|^2_{2n/(n-2\sigma)} \leq \widehat{C} \langle ( -\Delta)^\sigma u, u \rangle = \widehat{C} \|( -\Delta)^{\sigma/2} u\|_2^2
\end{equation}
for some $\widehat{C}>0$.
Since $D((-\Delta)^\sigma) \xhookrightarrow{d} D((-\Delta)^{\sigma/2})$,  \eqref{S2: Nash ineq frac} and \eqref{S2: Sob ineq frac} actually  hold for all $u\in D((-\Delta)^{\sigma/2})$.

\eqref{S2: Sob ineq frac} paves the way to the  Logarithmic Sobolev inequality in Theorem~\ref{Thm: functional ineq A1}(vi).
\begin{proof}(of Theorem~\ref{Thm: functional ineq A1}(vi))
Without loss of generality, we assume that $\|u\|_2=1$ so that $u^2 d\mu_g$ is a probability measure. Put $p=2n/(n-2\sigma)$.
By the Jensen's inequality, 
\begin{align*}
\int_\M |u|^2 \ln(|u|)\, d\mu_g =& \frac{1}{p-2}\int_\M  \ln(|u|^{p-2})|u|^2\, d\mu_g \\
 \leq & \frac{1}{p-2}  \ln \|u\|_p^p \\
=& \frac{p}{2(p-2)}  \ln \|u\|_p^2 \\
\leq & \frac{n}{4\sigma}(\ln (\frac{1}{\varepsilon}) +\varepsilon \|u\|^2_{2n/(n-2\sigma)}).
\end{align*}
The last step is due to the fact that $\ln(t)\leq \varepsilon t  - \ln(\varepsilon)$ for all $t,\varepsilon>0$.
%Following the proof for \cite[Theorem~2.4.2]{Dav89}, one has
%$$
%\int_\M |u|^2 \ln(u) \, d\mu_g \leq \frac{n}{4\sigma}(\ln (\frac{1}{\varepsilon}) +\varepsilon \|u\|^2_{2n/(n-2\sigma)}).
%$$
Applying \eqref{S2: Sob ineq frac}, we infer that
\begin{align*}
\int_\M |u|^2 \ln(|u|) \, d\mu_g \leq \frac{n}{4\sigma}(\ln (\frac{1}{\varepsilon}) +\varepsilon \widehat{C} \|( -\Delta)^{\sigma/2} u\|_2^2).
\end{align*}
This establishes the desired inequality.
\end{proof}

Proposition~\ref{S2: Markov semigroup frac} implies that $-(-\Delta)^\sigma$ generates a symmetric Markov semigroup. So we can derive a Strook-Varopoulos type inequality from \cite[Theorem~2.1]{LisSem96}.
\begin{lem}
\label{S2: S-V ineq}
If  $p\in (1,\infty)$ and $u\in D((-\Delta_p)^\sigma)$, then $|u|^{\frac{p-2}{2}}u\in D((-\Delta)^{\sigma/2})$
and
\begin{align*}
\frac{4(p-1)}{p^2}\| ( -\Delta)^{\sigma/2} (|u|^{\frac{p-2}{2}}u)\|_2^2 &\leq   \int_\M |u|^{p-2}u ( -\Delta_p)^\sigma u \, d\mu_g  \\
&\leq   C\| ( -\Delta)^{\sigma/2} (|u|^{\frac{p-2}{2}}u)\|_2^2
\end{align*}
for some $C=C(p).$ 
\end{lem}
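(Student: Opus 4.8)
The statement is the Stroock–Varopoulos inequality for the subordinate Dirichlet form, with nonlinearity $\Phi(s) = |s|^{p-2}s$ and its square-root partner $\Psi(s) = |s|^{(p-2)/2}s$. By Proposition~\ref{S2: Markov semigroup frac}, the operator $-(-\Delta)^\sigma$ generates a symmetric Markov semigroup on $L_2(\M)$; since $(-\Delta)^\sigma$ is non-negative and self-adjoint, its square root $(-\Delta)^{\sigma/2}$ is well defined and
$$
\mathcal{E}(v,w) := \langle (-\Delta)^{\sigma/2}v, (-\Delta)^{\sigma/2}w\rangle_2, \qquad D(\mathcal{E}) = D((-\Delta)^{\sigma/2}),
$$
is the associated closed symmetric Dirichlet form. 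In this language the claim reads $\tfrac{4(p-1)}{p^2}\mathcal{E}(\Psi(u),\Psi(u)) \leq \langle (-\Delta_p)^\sigma u, \Phi(u)\rangle \leq C(p)\,\mathcal{E}(\Psi(u),\Psi(u))$, together with the form-domain membership $\Psi(u)\in D(\mathcal{E})$.

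The economical route is to invoke \cite[Theorem~2.1]{LisSem96} directly. Its hypotheses — a symmetric submarkovian semigroup on $L_2(\M)$ — are exactly what Proposition~\ref{S2: Markov semigroup frac} supplies, and its $L_p$-generator is $(-\Delta_p)^\sigma$ because Section~\ref{Section 3} showed the two constructions of the $L_p$-semigroup coincide. Feeding in the explicit pair $(\Phi,\Psi)$ then yields both bounds and the membership $\Psi(u)\in D(\mathcal{E})$ at once. The lower constant is forced by the identity $\Phi'(s) = \tfrac{4(p-1)}{p^2}\,(\Psi'(s))^2$ (equivalently $(p-1)|s|^{p-2} = \tfrac{4(p-1)}{p^2}\cdot\tfrac{p^2}{4}|s|^{p-2}$), which is precisely why $\tfrac{4(p-1)}{p^2}$ appears.

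If instead one wants a self-contained argument, the mechanism is subordination combined with a pointwise two-point inequality. Using the Phillips representation \eqref{Phillips formula} I would write both $\langle (-\Delta_p)^\sigma u, \Phi(u)\rangle$ and $\mathcal{E}(\Psi(u),\Psi(u))$ as $\frac{\sigma}{\Gamma(1-\sigma)}\int_0^\infty \langle (I - e^{t\Delta})(\,\cdot\,), (\,\cdot\,)\rangle\, t^{-\sigma-1}\,dt$, and represent each inner energy by its nonnegative-kernel jump form $\frac12 \iint p(t,x,y)\,\delta v\,\delta w\, d\mu_g\,d\mu_g$ plus a mass-defect term, where $\delta v := v(x)-v(y)$. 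One then applies, under the double integral, the elementary bounds
$$
\tfrac{4(p-1)}{p^2}\,(\Psi(a)-\Psi(b))^2 \leq (a-b)(\Phi(a)-\Phi(b)) \leq C(p)\,(\Psi(a)-\Psi(b))^2, \qquad a,b\in\R,
$$
and reassembles the $t$-integral to recover the two energies.

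The main obstacle is the upper bound. The lower two-point inequality with the sharp constant is a standard convexity computation, but the upper one holds only with a nonsharp $p$-dependent constant and must be checked carefully when $a,b$ have opposite signs; moreover one has to justify $\Psi(u)\in D((-\Delta)^{\sigma/2})$ and the interchange of the $t$-integral with the spatial integrations (Tonelli suffices on the nonnegative lower side, while the upper side and the domain claim require the closedness of $\mathcal{E}$ together with a monotone/limiting argument). All of these technicalities are exactly what \cite[Theorem~2.1]{LisSem96} packages, which is why citing it is the cleanest choice.
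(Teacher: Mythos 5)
Your proposal is correct and takes essentially the same route as the paper: the paper's entire proof consists of observing that Proposition~\ref{S2: Markov semigroup frac} makes $-(-\Delta)^\sigma$ the generator of a symmetric Markov semigroup on $L_2(\M)$ and then invoking \cite[Theorem~2.1]{LisSem96}, exactly as in your first paragraph. Your additional self-contained subordination sketch is extra detail the paper does not record, but the core argument coincides.
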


A generalization of Strook-Varopoulos inequality can be obtained analogously by means of \cite[Theorem~2.2]{LisSem96}.
\begin{lem}
\label{S2: general S-V ineq}
Let $\psi\in C^2(\R) $ be such that $\psi(s)=0$ for $s\leq 0$, $\psi^\prime(s)>0$ for $s>0$ and $0\leq \psi\leq 1$
and $\displaystyle \sup\limits_{t>0}(1+ \frac{t \psi^{\prime\prime}(t)}{2\psi^\prime(t) })^2<\infty$.
 %and $\phi$ be such that $\phi^\prime=\sqrt{\psi^\prime}$ and $\phi(0)=0$. 
Further, put $G_\psi(t)=t \sqrt{\psi^\prime(t)}$.
If $u\in D((-\Delta_p)^\sigma)\cap L_p(\M,\R_+)$ for some $p\in [1,\infty)$ and $\psi(u)(-\Delta)^\sigma u \in L_1(\M)$, then
$G_\psi(u)\in D((-\Delta)^{\sigma/2})$ and
$$
\|(-\Delta)^{\sigma/2} G_\psi(u)\|_2^2 \leq C \int_\M \psi(u)(-\Delta)^\sigma u \, d\mu_g 
$$
for some $C=C(\psi) $. 
\end{lem}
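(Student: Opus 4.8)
The plan is to deduce the statement directly from the generalized Strook--Varopoulos inequality \cite[Theorem~2.2]{LisSem96}, in complete parallel with the way Lemma~\ref{S2: S-V ineq} was extracted from \cite[Theorem~2.1]{LisSem96}. The starting point is Proposition~\ref{S2: Markov semigroup frac}, which guarantees that $-(-\Delta)^\sigma$ generates a symmetric Markov semigroup on $L_2(\M)$. Its associated quadratic form is therefore a closed, symmetric, Markovian Dirichlet form $\cE$ with form domain $D((-\Delta)^{\sigma/2})$, acting as
$$
\cE(v,v)=\langle (-\Delta)^{\sigma/2} v, (-\Delta)^{\sigma/2} v\rangle = \|(-\Delta)^{\sigma/2} v\|_2^2, \qquad v\in D((-\Delta)^{\sigma/2}).
$$
This is precisely the object to which \cite[Theorem~2.2]{LisSem96} applies, and rewriting its conclusion in terms of $\cE$ will produce the left-hand side $\|(-\Delta)^{\sigma/2} G_\psi(u)\|_2^2$ we are after.

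Next I would verify that $\psi$ meets the structural hypotheses of that theorem. The assumptions $\psi\in C^2(\R)$, $\psi\equiv 0$ on $(-\infty,0]$, $\psi'>0$ and $0\le\psi\le 1$ on $(0,\infty)$, together with the uniform bound $\sup_{t>0}\bigl(1+ t\psi''(t)/(2\psi'(t))\bigr)^2<\infty$, are exactly the conditions under which \cite[Theorem~2.2]{LisSem96} yields both the membership $G_\psi(u)=u\sqrt{\psi'(u)}\in D((-\Delta)^{\sigma/2})$ and the form bound
$$
\cE(G_\psi(u),G_\psi(u)) \le C\int_\M \psi(u)(-\Delta)^\sigma u\, d\mu_g,
$$
with $C=C(\psi)$ controlled by the displayed supremum. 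The standing hypothesis $\psi(u)(-\Delta)^\sigma u\in L_1(\M)$ is what renders the right-hand pairing finite and meaningful. Substituting $\cE(G_\psi(u),G_\psi(u)) = \|(-\Delta)^{\sigma/2} G_\psi(u)\|_2^2$ then gives the assertion.

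The step requiring the most care, and the one I expect to be the main obstacle, is reconciling the $L_p$ framing of the hypothesis with the $L_2$-based form inequality of \cite[Theorem~2.2]{LisSem96}. The element $u$ is only assumed to lie in $D((-\Delta_p)^\sigma)\cap L_p(\M,\R_+)$ for a general $p\in[1,\infty)$, and the $(-\Delta)^\sigma u$ in the integrand must be read as $(-\Delta_p)^\sigma u$. Here I would invoke the consistency $(-\Delta_p)^\sigma=(-\Delta)_p^\sigma$ recorded before Proposition~\ref{S2: Lap-frac invert}, so that the $L_p$ and $L_2$ realizations agree on a common dense core; this identifies the pairing $\int_\M \psi(u)(-\Delta_p)^\sigma u\,d\mu_g$ with the form quantity demanded by the theorem. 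A density/approximation argument, approximating $u$ by elements of the common core and passing to the limit via the closability of $\cE$, then transfers the conclusion to the full class in the statement. The uniform bound on $1+ t\psi''(t)/(2\psi'(t))$ is exactly the ingredient preventing $G_\psi$ from leaving the form domain under this limit, and is therefore the hypothesis doing the essential analytic work.
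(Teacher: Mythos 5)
Your proposal is correct and follows essentially the same route as the paper, which likewise obtains the lemma by applying the generalized Strook--Varopoulos inequality of \cite[Theorem~2.2]{LisSem96} to the symmetric Markov semigroup generated by $-(-\Delta)^\sigma$ (Proposition~\ref{S2: Markov semigroup frac}), in parallel with the derivation of Lemma~\ref{S2: S-V ineq} from \cite[Theorem~2.1]{LisSem96}. Your additional care with the $(-\Delta_p)^\sigma=(-\Delta)_p^\sigma$ consistency and the density argument only makes explicit what the paper leaves implicit.
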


\begin{remark}
In particular, we can take $\psi$ to be appropriate approximations of the Heaviside function in Lemma~\ref{S2: general S-V ineq}.
\end{remark}

%%%%%%%%%%%%%%%%%%%%%%%%%%%%%%%%%%%%%%%%%%%%%%%%%%%%%%%%%

\subsection{\bf A Logarithmic Sobolev  Inequality on Closed Manifolds}\label{Section 4.2}

Assume that  $(\M,g)$ is a  closed manifold with dimension $n >2$, i.e., it satisfies (A2). For simplicity, we suppose that ${\rm vol}(\M)=1$. 
Here ${\rm vol}(\M)$ is the total volume of $(\M,g)$.
It is well known that $D(\Delta_p)=H^2_p(\M)$ for all $1<p<\infty$, where $H^s_p(\M)$ is the Bessel potential space.
Since it follows from \cite[Theorem 10.3]{AmaHieSim94} that, for certain $c>0$, $c-\Delta_p$ has bounded imaginary power, by \cite[(I.2.9.8)]{Ama95} and \cite[Lemma 2.3.5]{Tana}
$$
D((-\Delta_p))^\sigma \doteq [L_p(\M), H^2_p(\M)]_\sigma \doteq H^{2\sigma}_p(\M),
$$
where $[\cdot,\cdot]_\theta$ is the complex interpolation method. 
Further, it follows from the standard embedding theorem that
\begin{equation}\label{S3.3: Domain embedding}
H^\sigma_2(\M) \hookrightarrow  L_q(\M),
\end{equation}
where $q=\frac{2n}{n-2\sigma}$.

We first start with the well-known  Sobolev-Poinc\'are inequality 
\begin{equation}
\label{S3.3: Poincare-cpt}
\|u-\overline{u}\|_{2^*} \leq C_1 \|\nabla u\|_2,
\end{equation}
where $2^*=\frac{2n}{n-2}$. By the H\"older inequality, we have
$$
\|u-\overline{u}\|_2 \leq \|u-\overline{u}\|_{2^*}^\theta \|u-\overline{u}\|_1^{1-\theta},
$$
where $\theta=\frac{n}{n+2}$. This implies the following Nash inequality 
\begin{equation}
\label{S3.3: Nash-cpt}
\|u-\overline{u}\|_2^{1+2/n} \leq C_1 \|\nabla u\|_2 \|u-\overline{u}\|_1^{2/n}.
\end{equation}
Based on \eqref{S3.3: Nash-cpt}, we will follow the idea in \cite[Proposition~6 and Theorem~1]{SchiWang12} and prove a non-local version of Nash type inequality.
By  \eqref{S3.3: Nash-cpt}, for all $u\in H^2_2(\M)$ with $\|u-\bar{u}\|_1=1$
\begin{align*}
\frac{d}{dt} \|e^{t \Delta} (u - \bar{u})\|_2^2 = &  2 \langle \Delta e^{t \Delta} (u - \bar{u}), e^{t \Delta} (u - \bar{u}) \rangle  
=  - 2 \| \nabla e^{t \Delta} u\|_2^2 \\
\leq &  -2C_1   \|e^{t \Delta} (u - \overline{u})\|_2^{2+4/n}/ \|e^{t \Delta} (u - \overline{u})\|_1^{4/n} \\
\leq & -2C_1   \|e^{t \Delta} (u - \overline{u})\|_2^{2+4/n}.
\end{align*}
By choosing $h(t)= \|e^{t \Delta} (u - \bar{u})\|_2^2= \|e^{t \Delta}  u - \bar{u} \|_2^2$ and $\varphi(t)= 2C_1  t^{1+2/n}$  for $t\geq 0$
in \cite[Lemma~5]{SchiWang12}, we immediately have
$$
\| e^{t \Delta} u - \overline{u}\|_2^2 \leq G^{-1}(G(\|u-\overline{u}\|_2^2)-t),\quad t\geq 0,
$$
holds for all $u\in H^2_2(\M)$ with $\|u-\overline{u}\|_1=1$ and $t \geq 0$. Here 
$$
G(t)=  \frac{n}{4 C_1}(1-t^{-2/n}),\quad t > 0.
$$
\eqref{Phillips formula} implies that for all $u\in H^2_2(\M)$ with $\|u-\overline{u}\|_1=1$ 
\begin{align*}
& \langle  (-\Delta)^\sigma u - \bar{u}, u - \bar{u} \rangle \\
= &  \frac{ \sigma}{\Gamma(1-\sigma)} \int_0^\infty t^{-\sigma-1}  \langle  u-e^{t \Delta} u , u - \bar{u} \rangle \, dt \\
=&  \frac{ \sigma}{\Gamma(1-\sigma)} \int_0^\infty t^{-\sigma-1} \left( \| u-\bar{u} \|_2^2-  \| e^{\frac{t}{2} \Delta} u - \bar{u} \|_2^2  \right) \, dt \\
\geq &  \frac{ \sigma}{\Gamma(1-\sigma)} \int_0^\infty t^{-\sigma-1} \left( \| u-\bar{u} \|_2^2 -G^{-1}(G(\| u-\bar{u} \|_2^2)-\frac{t}{2} ) \right)\, dt =:g(\| u-\bar{u} \|_2^2),
\end{align*}
where
$$
g(r)= \frac{ \sigma}{\Gamma(1-\sigma)} \int_0^\infty t^{-\sigma-1}  \left(r-G^{-1}(G(r)-\frac{t}{2} ) \right)\, dt.
$$
We have
\begin{align}
\notag g(r) = & \frac{ \sigma}{\Gamma(1-\sigma)}\int_0^\infty t^{-\sigma-1}  \left(\int_{G(r)-t/s}^{G(r)} \, dG^{-1}(u)  \right)\, dt \\
\notag =&   \frac{ 1}{2^\sigma \Gamma(1-\sigma)} \int_0^r \frac{ds}{(G(r)-G(s))^\sigma} \\
\label{step}
\geq &  C\int_{r/2}^r \frac{s^{\sigma+2\sigma/n}}{(r-s)^\sigma}\, ds  \\
\notag \geq & C r^{1+\frac{2\sigma}{n}},
\end{align}
where in \eqref{step} we have used \cite[(10)]{SchiWang12} by choosing $B(t)=t^{2/n}$, i.e.
$$
\frac{G(r)-G(u)}{r-u} \geq \frac{u^{-1-2/n}}{2}.
$$
This establishes   the following Nash type inequality
\begin{equation}
\label{S3.3: fractional Nash-cpt}
\|u-\overline{u}\|_2^{1+2\sigma/n} \leq C_2 \| (-\Delta)^{\sigma/2} u \|_2 \| u-\overline{u}\|_1^{2\sigma/n}
\end{equation}
holds for all $u\in D((-\Delta)^\sigma)$ and thus for all $u\in D((-\Delta)^{\sigma/2})$.

%Based on \eqref{S3.3: Nash-cpt}, we can follow the proof of \cite[Proposition~6]{SchiWang12} \textcolor{blue}{reproduce?} and show that
%$$
%\| e^{t \Delta} u - \overline{u}\|_2^2 \leq G^{-1}(G(\|u-\overline{u}\|_2^2)-t),\quad t\geq 0,
%$$
%holds for all $u\in H^2_2(\M)$ with $\|u-\overline{u}\|_1=1$ and $t \geq 0$. Here 
%$$
%G(t)=C_1^{2} \frac{n}{4}(1-t^{-2/n}),\quad t \geq 0.
%$$
%Then one can follow the proof of \cite[Theorem~1]{SchiWang12} step by step and prove that the following Nash type inequality
%\begin{equation}
%\label{S3.3: fractional Nash-cpt}
%\|u-\overline{u}\|_2^{1+2\sigma/n} \leq C_2 \| (-\Delta)^{\sigma/2} u \|_2 \| u-\overline{u}\|_1^{2\sigma/n}
%\end{equation}
%holds for all $u\in D((-\Delta)^\sigma)$ and thus for all $u\in D((-\Delta)^{\sigma/2})$, where $C_2=C_1^\sigma 2^{\frac{n+2\sigma}{2n}}$.

Applying the Young's inequality to \eqref{S3.3: fractional Nash-cpt}, we immediately derive a super Poincar\'e type inequality 
\begin{equation}
\label{S3.3: super poincare}
\|u-\overline{u}\|_2^2 \leq r \|u-\overline{u}\|_1^2 + \beta(r) \| (-\Delta)^{\sigma/2} u \|_2^2, \quad u \in D((-\Delta)^{\sigma/2}) 
\end{equation}
for all $r>0$, where $\beta: (0,\infty)\to (0,\infty)$ is a decreasing function.
In view of the fact ${\rm vol}(\M)=1$, this implies 
\begin{equation}
\label{S3.3: super poincare-2}
\|u-\overline{u}\|_2^2 \leq r \|u-\overline{u}\|_\infty^2 + \beta(r) \| (-\Delta)^{\sigma/2} u \|_2^2, \quad u \in D((-\Delta)^{\sigma/2}).
\end{equation}
A direct computation shows that
$$
\|u-\overline{u}\|_2^2= \|u\|_2^2 - \overline{u}^2
$$
and
$$
\|u-\overline{u}\|_1^2 \leq 4 \|u\|_1^2.
$$
Plugging these results into \eqref{S3.3: super poincare}, we infer that
\begin{equation}\label{S3.3: weak poincare}
\|u\|_2^2 \leq C_3 \|u\|_1^2 + C_4 \| (-\Delta)^{\sigma/2} u \|_2^2 , \quad u \in D((-\Delta)^{\sigma/2}).
\end{equation}
Based on \eqref{S3.3: super poincare-2} and \eqref{S3.3: weak poincare}, \cite[Proposition~1.3]{RockWang01} implies that
\begin{equation*}
\| u -\overline{u}\|_2^2 \leq C \| (-\Delta)^{\sigma/2} u \|_2^2 , \quad u \in D((-\Delta)^{\sigma/2}).
\end{equation*}
Combining with \eqref{S3.3: Domain embedding}, we establish the following  Sobolev-Poincar\'e type inequality:
\begin{equation}\label{S3.3: fractional Poincare-cpt}
\| u -\overline{u}\|_{2n/(n-2\sigma)}^2 \leq \widetilde{C} \| (-\Delta)^{\sigma/2} u \|_2^2 , \quad u \in D((-\Delta)^{\sigma/2}).
\end{equation}
Note that by the H\"older  inequality, \eqref{S3.3: fractional Poincare-cpt} implies \eqref{S3.3: fractional Nash-cpt}. 

Based on \eqref{S3.3: fractional Poincare-cpt}, Theorem~\ref{Thm: functional ineq A2}(vi) immediately follows from a similar   proof of Theorem~\ref{Thm: functional ineq A1}(vi).

Finally, we would like to point out that  Lemmas~\ref{S2: S-V ineq} and \ref{S2: general S-V ineq} still hold true for  closed manifolds $(\M,g)$.

%We thus obtain the following proposition.

%%%%%%%%%%%%%%%%%%%%%%%%%%%%%%%%%%%%%%%%%%%

\subsection{\bf Other Functional Inequalities, Heat Kernel and Semigroup Estimates via Subordination}\label{Section 4.3}

In this subsection, we will continue the discussion in Sections~\ref{Section 4.1} and \ref{Section 4.2} and derive the remaining functional inequalities  and the heat kernel and semigroup estimates for $\{e^{-t (-\Delta)^\sigma}\}_{t\geq 0}$ in Theorems~\ref{Thm: functional ineq A1} and \ref{Thm: functional ineq A2}.

First, we consider a  Riemannian manifold $(\M,g)$ satisfying (A1) with  $n >2$. 
Applying the Young's inequality to the Nash type inequality~\eqref{S2: Nash ineq frac}, we obtain a super Poincar\'e type inequality 
\begin{equation}
\label{A: frac super poincare}
\|u\|_2^2 \leq r \|u \|_1^2 + \beta(r) \| (-\Delta)^{\sigma/2} u \|_2^2, \quad u \in D((-\Delta)^{\sigma/2}),
\end{equation}
where $\beta: (0,\infty)\to (0,\infty)$ is a decreasing function.
The Sobolev-Poincar\'e type inequality \eqref{S2: Sob ineq frac}  and Theorem~\ref{VSC Thm} imply that 
\begin{equation}
\label{A: frac heat semigroup}
\| e^{-t (-\Delta)^\sigma} u \|_\infty \leq C t^{-n/2\sigma} \|u\|_1
\end{equation} 
for all $u\in L_1(\M)$.

We denote by $p_\sigma(x,y,t)$ the heat kernel of the semigroup $\{e^{-t(-\Delta)^\sigma}\}_{t\geq 0}$.
Given any $\Omega\subset\subset \M$, it follows from Proposition~\ref{S2: Markov semigroup frac} that
$$
\int_\Omega p_\sigma(x,y,t)\, d\mu_g(y) \leq 1 .
$$
Letting $\Omega$ invade $\M$ yields
$$
\int_\M p_\sigma(x,y,t)\, d\mu_g(y) \leq 1.
$$
It is evident that $p_\sigma(x,y,t)\geq 0$ for all $x,y\in \M$ and $t>0$ and symmetric in $x$ and $y $. 
This, in particular, implies that for every fixed $x$, the heat kernel $p_\sigma(x,y,t)$, as a function of $y$, has $L_1-$norm no larger than $1$.
Using the semigroup property
$$
\int_\M p_\sigma(x,y,t) p_\sigma(y,z,s)\, d\mu_g(y)  = p_\sigma(x,z,t+s) 
$$
and \eqref{A: frac heat semigroup},
we can derive the heat kernel upper bound
\begin{equation}
\label{A: frac heat kernel upper bound}
p_\sigma(x,y,t) \leq C t^{-n/2\sigma},\quad x,y\in \M,\, t>0.
\end{equation} 
By \cite[Theorem~3.1]{Grig97}, we can further derive the Gaussian upper bound for the heat kernel
\begin{equation}
\label{A: frac heat kernel Gauss upper bound}
p_\sigma(x,y,t) \leq C t^{-n/2\sigma} e^{-d^2(x,y)/At},\quad x,y\in \M,\, t>0,
\end{equation} 
for some $C,A>0$. Here $d(x,y)$ is the distance between $x$ and $y$.
By Jensen's inequality, for all $1\leq p<\infty$ and $t>0$
\begin{align*}
|e^{-t (-\Delta)^\sigma} u (x)|^p =& |\int_M p_\sigma(x,y,t) u(y)\,d\mu_g(y)|^p \\
\leq &   \int_M p_\sigma(x,y,t) |u(y)|^p \,d\mu_g(y)  \\
\leq & C  t^{-n/2\sigma}e^{-d^2(x,y)/At} \|u\|_p^p.
\end{align*}
This implies that the semigroup $\{e^{-t (-\Delta)^\sigma} \}_{t\geq 0}$ is ultracontractive, i.e.
\begin{equation}
\label{A: frac heat semigroup ultracontractive}
\|e^{-t (-\Delta)^\sigma} u \|_\infty \leq C t^{-\frac{n}{2 p\sigma }} e^{\frac{-d^2(x,y)}{Apt}}\|u\|_p.
\end{equation}
Finally, \eqref{S2: Nash ineq frac}, \eqref{S2: Sob ineq frac}, \eqref{A: frac super poincare}, \eqref{A: frac heat kernel Gauss upper bound} and \eqref{A: frac heat semigroup ultracontractive} give Theorems~\ref{Thm: functional ineq A1}.

\begin{remark}
The estimate \eqref{A: frac heat semigroup} can also be derived by using \eqref{S2: def subordinate semigroup}. 
\end{remark}

Now we turn our attention to a closed Riemannian manifold $(\M,g)$.
%By \cite[(6.40)]{Grig99},  the heat kernel  $p(x,y,t)$ of the semigroup $\{e^{t\Delta}\}_{t\geq 0}$ fulfils
%\begin{equation}
%\label{A: heat kernel closed}
%p(x,y,t) \leq C \max\{1, t^{-n/2}\} e^{-d^2(x,y)/At}
%\end{equation}
%for some $C,A>0$.
%Note that \cite[(6.40)]{Grig99} is satisfied for manifolds of bounded geometry and thus for closed manifolds.
%Based on \eqref{A: heat kernel closed}, we can derive that 
%\begin{equation*}
%\label{A: heat semigroup closed}
%\| e^{t  \Delta} u \|_\infty \leq   C \max\{1, t^{-n/2}\} \|u\|_1.
%\end{equation*}

Pick any $u\in H^{2\sigma}_2(\M)$ with  $\|u-\overline{u}\|_1=1$. Then
\begin{align*}
\frac{d}{dt}\| e^{-t(-\Delta)^\sigma} (u-\overline{u})\|_2^2 \leq & -2 \langle (-\Delta)^\sigma e^{-t(-\Delta)^\sigma} (u-\overline{u}), e^{-t(-\Delta)^\sigma} (u-\overline{u}) \rangle \\
\leq & -C \|e^{-t(-\Delta)^\sigma} (u-\overline{u})\|_2^{2+4\sigma/n} \|e^{-t(-\Delta)^\sigma} (u-\overline{u})\|_1^{-4\sigma/n}\\
\leq & -C \|e^{-t(-\Delta)^\sigma} (u-\overline{u})\|_2^{2+4\sigma/n}.
\end{align*}
The second line follows from \eqref{S3.3: fractional Nash-cpt} and the third is a direct consequence of the contraction of the semigroup $\{e^{-t(-\Delta)^\sigma}\}_{t\geq 0}$.
This implies that for any $u\in H^{2\sigma}_2(\M)$
$$
\|e^{-t(-\Delta)^\sigma} (u-\overline{u})\|_2 \leq C t^{-\frac{n}{4\sigma}} \|u-\overline{u}\|_1.
$$
By the triangle inequality, we immediately have
$$
\|e^{-t(-\Delta)^\sigma} u\|_2 \leq  C (t^{-\frac{n}{4\sigma}} +1) \|u\|_1 \leq C \max\{1, t^{-\frac{n}{4\sigma}}\}\|u\|_1.
$$
Given any $f\in L_1(\M)$, 
\begin{align*}
\langle e^{-t(-\Delta)^\sigma} u, f \rangle \leq & \|e^{-t(-\Delta)^\sigma/2} u \|_2 \| e^{-t(-\Delta)^\sigma/2} f \|_2 \\
\leq &  C \max\{1, t^{-\frac{n}{2\sigma}}\}\|u\|_1 \|f\|_1,
\end{align*}
which implies
\begin{equation*}
\label{A: frac heat semigroup closed}
\| e^{-t (-\Delta)^\sigma} u \|_\infty \leq C \max\{1, t^{-n/2\sigma}\} \|u\|_1.
\end{equation*}
Now following the   argument leading to \eqref{A: frac heat kernel upper bound}, we can derive the Gaussian upper bound for the heat kernel
\begin{equation}
\label{A: frac heat kernel upper bound-closed}
p_\sigma(x,y,t) \leq C \max\{1,t^{-n/2\sigma}\}  e^{-d^2(x,y)/At},\quad x,y\in \M,\, t>0,
\end{equation} 
and the ultracontractivity
\begin{equation}
\label{A: frac heat semigroup ultracontractive-closed}
\|e^{-t (-\Delta)^\sigma} u \|_\infty \leq C \max\{1, t^{-\frac{n}{2 p\sigma }} \} e^{\frac{-d^2(x,y)}{Apt}}\|u\|_p, \quad 1\leq p<\infty.
\end{equation} 
In sum, \eqref{S3.3: fractional Nash-cpt}, \eqref{S3.3: super poincare}, \eqref{S3.3: fractional Poincare-cpt}, \eqref{A: frac heat kernel upper bound-closed} and \eqref{A: frac heat semigroup ultracontractive-closed} give Theorems~\ref{Thm: functional ineq A2}.

%%%%%%%%%%%%%%%%%%%%%%%%%%%%%%%%%%%%%%%%%%%%%%%%%%%%%%%%%

\section{\bf Solutions to the Fractional Porous Medium Equation}
\label{Section 5}

%%%%%%%%%%%%%%%%%%%%%%%%%%%%%%%%%%%%%%%

To prove the global well-posedness of \eqref{S1: FPME}, we first study the following generalization of \eqref{S1: FPME} with $\omega\geq 0$
\begin{equation}
\label{S4-FPME-omega}
\left\{\begin{aligned}
\partial_t u +[\omega+(-\Delta)^\sigma] (|u|^{m-1}u  )&=0   &&\text{on}&&\M\times (0,\infty);\\
u(0)&=u_0    &&\text{on}&&\M .
\end{aligned}\right.
\end{equation}
In \cite{RoidosShao18}, we  established the existence and uniqueness of a strong solution  to \eqref{S1: FPME} on an incomplete Riemannian manifold with conical singularities and finite volume. 
We will nevertheless state a brief proof for  the existence and uniqueness part  for two reasons: 
(1) we will adopt a more elegant argument which is applicable to   manifolds with infinite volume;
%(2) the new approach is simpler and generalize the results in \cite{RoidosShao18}; 
(2) the proofs of the asymptotic behaviors of  solutions rely on how they are constructed. 
%The proofs for the cases ${\rm vol}(\M)=\infty$ and ${\rm vol}(\M)<\infty$ are essentially the same.
%For this, in this section, we will focus on the former.
% and only state the necessary changes for the finite volume case.

In this section, we assume that the initial datum $u_0\in L_1(\M)\cap L_{m+1}(\M)$. 
The initial condition will be relaxed to $u_0\in L_1(\M)\cap L_2(\M)$ when $m>1$ and the underlying manifolds satisfying (A1) or (A2) in the next two sections.

\subsection{\bf Definition of  Solutions}\label{Section 5.1}
 
Let $\Phi(x)=|x|^{m-1}x$  and $\beta=\Phi^{-1}$. Note that $\Phi$ and $\beta$ are maximal monotone graphs in $\R^2$ containing $(0,0)$.
We define the notions of solutions to \eqref{S4-FPME-omega} as follows.
\begin{definition}\label{Def: solution}
Given $\omega\geq 0$, we say that $u$ is a weak solution to \eqref{S4-FPME-omega}   if
\begin{itemize}
\item $u\in L_{\infty,loc}((0,\infty), L_{m+1}(\M))$, and 
\item $ (-\Delta)^{\sigma/2} \Phi(u) , \sqrt{\omega}\Phi(u)\in L_{2,loc}((0,\infty), L_2(\M) )$, and
\item $u\in C([0,\infty),L_1(\M))$.
\end{itemize}
Moreover, for every $\phi\in C^1_c([0,\infty)\times \M)$, it holds that
\begin{align}
\label{S4: weak sol} 
 \notag &\int_0^\infty \int_\M ( -\Delta)^{\sigma/2} \Phi(u)  ( -\Delta)^{\sigma/2}  \phi \, d\mu_g dt +\omega \int_0^\infty \int_\M   \Phi(u)    \phi \, d\mu_g dt\\
 = &  \int^\infty_0\int_\M u \partial_t \phi \, d\mu_g dt 
 +\int_\M  u_0\phi(0)\, d\mu_g.
\end{align}
If, in addition, $u$ satisfies
\begin{itemize}
\item when $m=1$,   $\partial_t u, (-\Delta)^\sigma \Phi(u) \in L_{2,loc}((0,\infty),   L_2(\M))$ and  further
$u\in C([0,\infty),L_2(\M))$; or
\item when $m\in (0,1)\cup (1,\infty)$, $\partial_t u, (-\Delta)^\sigma \Phi(u) \in L_{\infty,loc}((0,\infty), L_1(\M) ),$
\end{itemize}
we call $u$ a strong solution to \eqref{S4-FPME-omega}.
%and $lim_{t\to 0} u (t, \cdot)=u_0$ in $\cH^{0,\gamma_1}_1(\M)$.
\end{definition}

%\begin{remark}
%The definitions of weak and strong solutions in Definition~\ref{Def: solution} are slightly weaker than those in \cite[Definitions~5.4 and 7.1]{RoidosShao18}. 
%However, for $u_0\in L_1(\M)\cap L_{m+1}(\M)$, a weak  solution to \eqref{S1: FPME}   indeed satisfies the conditions in \cite[Definition~5.4]{RoidosShao18}; if further $m>1$, the solution is strong in the sense of \cite[Definition~7.1]{RoidosShao18}.
%The weaker version  Definition~\ref{Def: solution}  is only needed to generalize the results in \cite{RoidosShao18}. 
%\end{remark}

\subsection{\bf Existence of Weak Solution}\label{Section 5.2}

By Proposition~\ref{Prop: m-accretive}, the operator 
$$
\cA(u):=[\omega+(-\Delta_1)^\sigma]\Phi(u)  :D(\cA)\subset L_1(\M)\to L_1(\M)
$$ 
is $m$-accretive and with dense domain.
We can apply the Crandall-Liggett generation theorem \cite[Theorem~I]{CraLig71} and  prove the existence of a  global mild solution to \eqref{S4-FPME-omega}.
%Mild solutions are defined as the limit of a sequence of approximation solutions by implicit time discretization. 
More precisely, given $T>0$,
for a partition $\mathcal{P}=\{0=t_0<t_1 <\cdots < t_n=T\}$ of $[0,T)$ with $\Delta T_k = t_k-t_{k-1}$, the discretized problem to \eqref{S4-FPME-omega} is 
\begin{equation}
\label{S4: DPME}
\Delta T_k [\omega+(-\Delta)^\sigma]\Phi(u_{n,k;\omega})  = u_{n,k-1;\omega}- u_{n,k;\omega}  \quad \text{with} \quad 
u_{n,0;\omega} = u_0  .
\end{equation}
For simplicity, we may take $\Delta T_k=T/n$.
The piecewise solution is defined as
$$
u_{n;\omega}(0)=u_0,\quad u_{n;\omega}(t)= u_{n,k;\omega} \quad \text{for } t\in (t_{k-1},t_k].
$$
The the uniform limit 
$
u_\omega\in C([0,T], L_1(\M))
$
of $u_{n;\omega}$, i.e. for any $\varepsilon>0$, 
\begin{equation}
\label{S5: mild sol def and converg}
\|u_\omega(t)-u_{n;\omega}(t)\|_1<\varepsilon, \quad t\in [0,T]
\end{equation}
for sufficiently large $n$, is the unique global mild solution to \eqref{S4-FPME-omega}.

$u_\omega$ is $L_q-$contractive for all $1\leq q\leq m+1$. Indeed, 
\cite[Proposition~4]{BreStr73} implies that  
\begin{equation}
\label{S4: L_infty contraction dis sol}
\|u_{n,k;\omega}\|_q  \leq 	\|u_{n,k-1;\omega}\|_q \leq \|u_0\|_q\quad 1\leq q \leq m+1,
\end{equation}
and it follows from  Fatou's Lemma and \eqref{S5: mild sol def and converg} that for any $0\leq t$,
\begin{equation}
\label{S4: unif est 2}
 \|u_\omega(t)\|_q\leq \|u_0\|_q.
\end{equation}
In view of \eqref{S4: DPME}, \eqref{S4: L_infty contraction dis sol} reveals that $[\omega+(-\Delta)^\sigma ]\Phi(u_{n,k;\omega}) \in L_1(\M)\cap  L_{m+1}(\M)$. 
 
Multiplying  \eqref{S4: DPME} by $\Phi(u_{n,k;\omega})$ and integrating over $\M$ give 
\begin{align}
\label{S4: weak est 1}
\notag& \frac{T}{n}  \int_\M \Big[ | ( -\Delta)^{\sigma/2}) \Phi(u_{n,k;\omega})|^2 +\omega |\Phi(u_{n,k;\omega})|^2 \Big]\, d\mu_g  \\
\notag =& \int_\M  u_{n,k-1;\omega} \Phi(u_{n,k;\omega})\, d\mu_g -\int_\M  u_{n,k;\omega} \Phi(u_{n,k;\omega})\, d\mu_g \\
\leq & \frac{1}{m+1} (\int_\M | u_{n,k-1;\omega}|^{m+1} \, d\mu_g  - \int_\M | u_{n,k;\omega}|^{m+1} \, d\mu_g ).
\end{align}
We have used the H\"older and Young's inequalities in \eqref{S4: weak est 1}.
Summing over   $k=1,2,\cdots,n$  yields
$$
\int_0^T \int_\M \Big[ | ( -\Delta)^{\sigma/2}) \Phi(u_{n ;\omega})|^2 +\omega |\Phi(u_{n ;\omega})|^2 \Big]\, d\mu_g dt \leq \frac{1}{m+1}\int_\M |u_0 |^{m+1}\, d\mu_g ;
$$
and thus 
\begin{equation}
\label{S4: unif est 3}
%\omega \|u_{ \omega}\|_{L_2((0,T), L_{2m}(\M))} + 
\|(-\Delta)^{\sigma/2} \Phi(u_{ \omega})\|_{L_2((0,T), L_2(\M))} 
\leq  \frac{1}{m+1}\int_\M |u_0 |^{m+1}\, d\mu_g .
\end{equation}
Multiplying  \eqref{S4: DPME} by $\phi\in C^1_c([0,\infty)\times\M)$ and integrating over $\M$ yield
\begin{align*}
& \int_\M  ( -\Delta)^{\sigma/2}  \Phi(u_{n,k;\omega}) ( -\Delta)^{\sigma/2}  \phi \, d\mu_g + \omega\int_\M    \Phi(u_{n,k;\omega})  \phi \, d\mu_g \\
=& \frac{n}{T}\int_\M (u_{n,k-1;\omega}-u_{n,k;\omega})\phi\, d\mu_g.
\end{align*}
Then integrate over $[t_{k-1},t_k)$ and sum over $k=1,2,\cdots,n$. The right hand side equals
\begin{align*}
&\frac{n}{T} \sum\limits_{k=1}^n \int_{t_{k-1}}^{t_k} \int_\M (u_{n,k-1;\omega} -u_{n,k;\omega})\phi \, d\mu_g dt\\
=& \int_0^T\int_\M u_{n;\omega}(t) \frac{\phi(t+T/n) -\phi(t)}{T/n}\, d\mu_g dt +  \frac{n}{T}\int_0^{t_1} \int_\M  u_0 \phi(t)\, d\mu_g dt\\
& - \frac{n}{T}\int_{t_{n-1}}^T \int_\M u_{n;\omega}(T) \phi(t+T/n)\, d\mu_g dt.
\end{align*}
Pushing $n\to\infty$ yields 
\begin{align}
\label{S4: weak sol-u_omega}
\notag & \int^T_0\int_\M u_\omega \partial_t \phi \, d\mu_g dt+\int_\M  u_0 \phi(0)\, d\mu_g -  \int_\M  u_\omega(T) \phi(T)\, d\mu_g\\
=&\int_0^T \int_\M ( -\Delta)^{\sigma/2} \Phi(u_\omega)  ( -\Delta)^{\sigma/2}  \phi \, d\mu_g dt +  \omega \int_0^T \int_\M\Phi(u_\omega)    \phi \, d\mu_g dt. 
\end{align}

%%%%%%%%%%%%%%%%%%%%%%%%%%%%%%%%%%%%%%

Take any positive   sequence $ \omega_k \to 0^+$. We rewrite \eqref{S4: DPME} as 
\begin{equation*}
\left\{\begin{aligned}
u_{n,k;\omega_h} + \Delta T_k [\omega_l+(-\Delta)^\sigma]\Phi(u_{n,k;\omega_h}) &= u_{n,k-1;\omega_h}-\Delta T_k (\omega_h-\omega_l)   \Phi(u_{n,k;\omega_h}) ;\\
u_{n,0;\omega_h} &=u_0  
\end{aligned}\right.
\end{equation*}
for $h<l$.
The existence of $u_{n;\omega_h}$ has already been established. Now we try to estimate $\|u_{n,k;\omega_h}-u_{n,k;\omega_l}\|_1$. 
Proposition~\ref{S4: semilinear thm} implies
\begin{align*}
\|u_{n,1;\omega_h}-u_{n,1;\omega_l}\|_1 \leq &\frac{T}{n} (\omega_h-\omega_l) \|  \Phi(u_{n,1;\omega_h})\|_1= \frac{T}{n} (\omega_h-\omega_l)\|u_{n,1;\omega_h}\|_m^m  \\
\leq & \frac{T}{n} (\omega_h-\omega_l)\|u_0\|_m^m,
\end{align*}
where the last step is due to \eqref{S4: L_infty contraction dis sol};
and
\begin{align*}
\|u_{n,2;\omega_h}-u_{n,2;\omega_l}\|_1  \leq & \|u_{n,1;\omega_h}-u_{n,1;\omega_l}\|_1 +  \frac{T}{n} (\omega_h-\omega_l) \|  \Phi(u_{n,2;\omega_h})\|_1 \\
\leq &\frac{2T}{n} (\omega_h-\omega_l)\|u_0\|_m^m.
\end{align*}
By induction, we thus have
$$
\|u_{n,k;\omega_h}-u_{n,k;\omega_l}\|_1 \leq \frac{kT}{n} (\omega_h-\omega_l)\|u_0\|_m^m.
$$
This implies that 
$$
\|u_{\omega_h}-u_{\omega_l}\|_{C([0,T], L_1(\M))} \leq (\omega_h-\omega_l)\|u_0\|_m^m.
$$
We conclude that $(u_{\omega_k})_k$ is Cauchy in $C([0,T], L_1(\M))$ and thus converges to some $u\in C([0,T], L_1(\M))$.
\eqref{S4: unif est 2} and \eqref{S4: unif est 3}  imply that  
\begin{align*}
%\label{S4:converg-1}
(-\Delta)^{\sigma/2}\Phi(u_\omega) & \rightharpoonup (-\Delta)^{\sigma/2}\Phi(u) \quad && \text{in}\quad L_2((0,T), L_2(\M))\\
%\label{S4:converg-2}
u_\omega  & \rightharpoonup u   \quad && \text{in}\quad L_\infty((0,T),L_{m+1} (\M)) 
\end{align*}
as $\omega\to 0^+$.
In view of \eqref{S4: unif est 2}, pushing $\omega \to 0^+$ in \eqref{S4: weak sol-u_omega} yields that 
\begin{align*}
  & \int^T_0\int_\M u \partial_t \phi \, d\mu_g dt+\int_\M u_0\phi(0)\, d\mu_g - \int_\M u(T)\phi(T)\, d\mu_g \\
     = &   \int_0^T \int_\M   ( -\Delta)^{\sigma/2 } \Phi(u  )  ( -\Delta)^{\sigma/2 }  \phi \, d\mu_g dt  
\end{align*}
for any $\phi\in C^1_c([0,\infty)\times \M)$.
Note that the estimate~\eqref{S4: unif est 3} holds for all $T>0$. We thus have
\begin{align*}
    \int^\infty_0\int_\M u \partial_t \phi \, d\mu_g dt =&  \lim\limits_{T\to \infty} \int^T_0\int_\M u \partial_t \phi \, d\mu_g dt  \\ 
     = &   \lim\limits_{T\to \infty} \int_0^T \int_\M   ( -\Delta)^{\sigma/2 } \Phi(u  )  ( -\Delta)^{\sigma/2 }  \phi \, d\mu_g dt\\  &-\int_\M u_0\phi(0)\, d\mu_g +  \lim\limits_{T\to \infty} \int_\M u(T)\phi(T)\, d\mu_g \\
     =& \int_0^\infty \int_\M   ( -\Delta)^{\sigma/2 } \Phi(u  )  ( -\Delta)^{\sigma/2 }  \phi \, d\mu_g dt -\int_\M u_0\phi(0)\, d\mu_g.
\end{align*}
Therefore, $u$ is a weak solution to \eqref{S1: FPME}.

%%%%%%%%%%%%%%%%%%%%%%%%%%%%%%%%%%%%%%%%%%%%%%%%%%%%%%
\subsection{\bf Existence and Uniqueness of Strong Solution}\label{Section 5.3}

The  strategies in this subsection are picked from \cite[Sections~6 and 8]{PalRodVaz12}. However, we will generalize some results in \cite{PalRodVaz12} to \eqref{S4-FPME-omega} with $\omega>0$, which will be used in the next two sections.

\begin{prop}\label{Prop: exist strong sol}
For any $\omega\geq 0$, the weak solutions $u_\omega$ to \eqref{S4-FPME-omega} constructed in Section~\ref{Section 5.2} are strong solutions.
\end{prop}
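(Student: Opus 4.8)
The plan is to upgrade the mild solutions $u_\omega$ constructed in Section~\ref{Section 5.2} to strong solutions by establishing the extra time regularity demanded in Definition~\ref{Def: solution}, treating the linear case $m=1$ and the genuinely nonlinear case $m\neq 1$ separately. Throughout I will use that $u_\omega$ is the Crandall--Liggett mild solution of the abstract Cauchy problem $u'+\cA u=0$, where $\cA=[\omega+(-\Delta_1)^\sigma]\Phi$ is the $m$-accretive, densely defined operator on $L_1(\M)$ furnished by Proposition~\ref{Prop: m-accretive}, and that the associated nonlinear semigroup is an $L_1$-contraction.

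For $m=1$ the operator $\cA=\omega+(-\Delta)^\sigma$ is linear and, by Proposition~\ref{S2: Lap-frac invert}, generates an analytic $C_0$-semigroup on $L_2(\M)$. Since the $L_p$-realizations of $(-\Delta)^\sigma$ are constructed consistently in Section~\ref{Section 3}, the mild solution coincides with $e^{-t\cA}u_0$ for $u_0\in L_1(\M)\cap L_2(\M)$. Analyticity then gives $u_\omega(t)\in D(\cA)$ and $\partial_t u_\omega(t)=-\cA u_\omega(t)$ for every $t>0$, with $t\mapsto \cA u_\omega(t)$ smooth into $L_2(\M)$; hence $\partial_t u_\omega,(-\Delta)^\sigma u_\omega\in L_{2,loc}((0,\infty),L_2(\M))$, while strong continuity of the semigroup on $L_2(\M)$ yields $u_\omega\in C([0,\infty),L_2(\M))$. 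This matches the $m=1$ requirements.

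For $m\neq 1$ the key tool is the B\'enilan--Crandall scaling argument, following \cite[Sections~6 and 8]{PalRodVaz12}. Because $\Phi(\lambda u)=\lambda^m\Phi(u)$ for $\lambda>0$, the operator is homogeneous of degree $m$, i.e.\ $\cA(\lambda u)=\lambda^m\cA u$, so the rescaled family $u_\lambda(t):=\lambda\,u_\omega(\lambda^{m-1}t)$ solves \eqref{S4-FPME-omega} with datum $\lambda u_0$ and, by uniqueness, is the mild solution issued from $\lambda u_0$. The $L_1$-contraction then gives $\|\lambda u_\omega(\lambda^{m-1}t)-u_\omega(t)\|_1\le |\lambda-1|\,\|u_0\|_1$. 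Setting $\lambda=1+\varepsilon$, dividing by $\varepsilon$ and letting $\varepsilon\to 0$ produces $\|u_\omega(t)+(m-1)t\,\partial_t u_\omega(t)\|_1\le \|u_0\|_1$, whence, using $\|u_\omega(t)\|_1\le \|u_0\|_1$ from \eqref{S4: unif est 2},
$$
\|\partial_t u_\omega(t)\|_1\le \frac{2\|u_0\|_1}{|m-1|\,t},\qquad t>0.
$$
Thus $t\mapsto u_\omega(t)$ is locally Lipschitz into $L_1(\M)$ on $(0,\infty)$, so by the nonlinear semigroup facts collected in Appendix~\ref{Section A} the mild solution is a strong solution of the abstract equation: $u_\omega(t)\in D(\cA)$ for a.e.\ $t>0$ with $\partial_t u_\omega(t)=-\cA u_\omega(t)$. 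Membership in $D(\cA)$ forces $\Phi(u_\omega(t))\in D((-\Delta_1)^\sigma)\subset L_1(\M)$ and $(-\Delta_1)^\sigma\Phi(u_\omega(t))\in L_1(\M)$; combining this with the bound above and the control $\|\Phi(u_\omega(t))\|_1=\|u_\omega(t)\|_m^m$ (via the $L_m$-contraction \eqref{S4: unif est 2} when $m\ge 1$, and a splitting over $\{|u_\omega|\le 1\}$ and $\{|u_\omega|>1\}$ using $L_1(\M)\cap L_{m+1}(\M)$ when $0<m<1$) yields $\partial_t u_\omega,(-\Delta)^\sigma\Phi(u_\omega)\in L_{\infty,loc}((0,\infty),L_1(\M))$, as required.

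I expect the nonlinear step to be the main obstacle: rigorously justifying the $\varepsilon\to 0$ differentiation of the rescaled family in $L_1(\M)$, invoking the principle that a locally Lipschitz mild solution of an accretive problem is an abstract strong solution, and then identifying the abstract image $\cA u_\omega$ with the two PDE quantities $(-\Delta)^\sigma\Phi(u_\omega)$ and $\omega\Phi(u_\omega)$ individually. This last identification is delicate precisely when $\omega>0$ and $0<m<1$, where the integrability of $\Phi(u_\omega)$ on an infinite-volume manifold has to be extracted from domain membership rather than from a direct contraction estimate.
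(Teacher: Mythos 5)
Your $m=1$ argument is sound and close to the paper's in effect (the paper identifies the constructed weak solution with the analytic-semigroup solution through a duality test-function argument rather than through consistency of the $L_1$ and $L_2$ resolvents, but both routes work). The genuine gap is in the $m\neq 1$ case. The homogeneity/scaling argument correctly yields the difference-quotient bound $\|(1+\varepsilon)u_\omega((1+\varepsilon)^{m-1}t)-u_\omega(t)\|_1\le \varepsilon\|u_0\|_1$, hence that $t\mapsto u_\omega(t)$ is locally Lipschitz (in particular $BV$) into $L_1(\M)$ on $(0,\infty)$. But your next two steps fail. First, ``dividing by $\varepsilon$ and letting $\varepsilon\to 0$'' to produce $\|u_\omega(t)+(m-1)t\,\partial_t u_\omega(t)\|_1\le\|u_0\|_1$ presupposes that $\partial_t u_\omega(t)$ exists as a strong $L_1$ derivative, which is exactly what must be proved. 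Second, the ``principle that a locally Lipschitz mild solution of an accretive problem is an abstract strong solution'' is false in $L_1(\M)$: that principle is valid in Banach spaces with the Radon--Nikodym property (e.g.\ reflexive spaces), but $L_1$ lacks it, and a Lipschitz curve in $L_1$ need not be differentiable at any point. Appendix~\ref{Section A} contains no such semigroup fact either; it only provides the $m$-accretivity of $\cA$ and resolvent estimates, so there is nothing there to invoke.

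This is precisely the gap the paper's proof (following \cite[Lemma~8.1 and Theorem~8.2]{PalRodVaz12}) is built to close. The paper first tests the weak formulation with $\phi=\zeta\,\delta^h(\Phi(u_\omega))$ (Steklov averages) and uses the pointwise inequality $(\delta^h u_\omega)(\delta^h\Phi(u_\omega))\ge c\,\bigl(\delta^h(|u_\omega|^{(m-1)/2}u_\omega)\bigr)^2$ together with the uniform bound \eqref{S4: unif est 3} to extract the structural regularity $\partial_t\bigl(|u_\omega|^{(m-1)/2}u_\omega\bigr)\in L_{2,loc}((0,T),L_2(\M))$. Only then, combining this with the $BV$-in-time property furnished by the homogeneity result \cite[Theorems~1 and 2]{BenCra81} (your scaling estimate), does it apply \cite[Theorem~1.1]{BenGar95} --- a theorem designed precisely to upgrade $BV$ mild solutions of such degenerate equations to strongly differentiable ones in $L_1$ --- to conclude $\partial_t u_\omega\in L_{\infty,loc}((0,T),L_1(\M))$ with the bound $\frac{2}{|m-1|t}\|u_0\|_1$, whence $(-\Delta)^\sigma\Phi(u_\omega)\in L_{\infty,loc}((0,T),L_1(\M))$ from the equation. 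Your proposed derivative bound is the correct final estimate, but without the Steklov-average step and the B\'enilan--Gariepy theorem (or a substitute playing their role), the passage from Lipschitz-in-time to an a.e.\ $L_1$-derivative does not go through.
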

\begin{proof}
When $m=1$, the standard semigroup theory, c.f. \cite[Theorem 4.1.4]{Pazy}, and Proposition~\ref{S2: Lap-frac invert} imply that  \eqref{S1: FPME} has  a unique solution in the class
\begin{align*}
\tilde{u}_\omega\in   C^1([0,\infty),L_2(\M))\cap C([0,\infty),D((-\Delta)^\sigma)).
\end{align*}
Define
$$
\phi(t)=\int_t^T ( u_\omega - \tilde{u}_\omega )\, ds ,\quad 0\leq t\leq T, 
$$
and $\phi\equiv 0$ for $t\geq T$,
which belongs to $H^1_2((0,T), D((-\Delta)^{\sigma/2}))\hookrightarrow C([0,T], L_2(\M))$. By a standard approximation argument, $\phi$ is a valid   test function in \eqref{S4: weak sol}. We have
\begin{align*}
\int_0^T\int_\M &(-\Delta)^{\sigma/2} ( u_\omega - \tilde{u}_\omega )(t) [\int_t^T (-\Delta)^{\sigma/2} ( u_\omega - \tilde{u}_\omega )(s)\, ds] \, d\mu_g dt \\
& + \omega \int_0^T\int_\M   ( u_\omega - \tilde{u}_\omega )(t) [\int_t^T  ( u_\omega - \tilde{u}_\omega )(s)\, ds] \, d\mu_g dt \\
&+ \int_0^T\int_\M (u_\omega-\tilde{u}_\omega)^2(t)   \, d\mu_g dt=0,
\end{align*}
which is equivalent to
\begin{align*}
& \frac{1}{2}\int_\M \Big[ \int_0^T   (\Delta)^{\sigma/2} ( u_\omega - \tilde{u}_\omega )(t)\, dt \Big]^2 \, d\mu_g  \\
 + & \frac{\omega}{2} \int_\M \Big[ \int_0^T  ( u_\omega - \tilde{u}_\omega )(t)\, dt \Big]^2 \, d\mu_g   
  + \int_0^T\int_\M (u_\omega-\tilde{u}_\omega)^2(t)    \, d\mu_g dt=0.
\end{align*}
All integrals need to be zero. We thus infer that $u_\omega=\tilde{u}_\omega$. 
This implies that
$$
\partial_t u_\omega \in L_{2,loc}((0,\infty), L_2(\M)) \quad \text{and} \quad u_\omega\in C ([0,\infty),L_2(\M)).
$$
Therefore, $u_\omega$ is   a strong solution to \eqref{S4-FPME-omega}.

When $m\neq 1$, the argument follows the idea in  \cite[Lemma~8.1 and Theorem~8.2]{PalRodVaz12}. 
For any $f\in L_{1,loc}(0,T)$, define the Steklov average of $f$ by
$$
f^h(t):=\frac{1}{h}\int^{t+h}_t f(s) \, ds,
$$
and 
$$
\delta^h f (t):=\partial_t f^h(t)= \frac{f(t+h)-f(t)}{h}\quad \text{a.e.}
$$
The weak formulation \eqref{S4: weak sol} can be restated as
\begin{align}
\label{S3: weak sol2}
\notag & \int_0^T \int_\M (\delta^h u_\omega )\phi \, d\mu_g\,dt  + \omega \int_0^T \int_\M (\Phi(u_\omega))^h  \phi \, d\mu_g\,dt
\\
&+ \int_0^T \int_\M  (-\Delta)^{\sigma/2} (\Phi(u_\omega))^h (-\Delta)^{\sigma/2} \phi \, d\mu_g\,dt =0.
\end{align}
For any $[\tau,S]\subset (0,T)$, we choose $\zeta\in C_0^1((0,T),[0,1])$ such that $\zeta\equiv 1$ on $[\tau,S]$ and vanishes outside $[\tau^\prime, S^\prime]$ for some $[\tau^\prime, S^\prime]\subset (0,T)$ with $[\tau,S]\subset (\tau^\prime, S^\prime)$.
Let us take $\phi=\zeta \delta^h (\Phi(u_\omega))$.
% for $0\leq \zeta\in C^1([0,T])\cap C_0((0,T))$. 
Then \eqref{S3: weak sol2} yields
\begin{align}
\label{S3: weak sol3}
\notag &\quad\int_0^T \int_\M \zeta (\delta^h u_\omega) \delta^h (\Phi(u_\omega)) \, d\mu_g\,dt + \omega \int_0^T \int_\M \zeta   (\Phi(u_\omega))^h  \partial_t(\Phi(u_\omega))^h \, d\mu_g\,dt \\
&+ \int_0^T \int_\M \zeta  (-\Delta)^{\sigma/2} (\Phi(u_\omega))^h (-\Delta_g)^{\sigma/2} \partial_t(\Phi(u_\omega))^h \, d\mu_g\,dt
=0.
\end{align}
Since  $(\delta^h u_\omega)(\delta^h \Phi(u_\omega))\geq c (\delta^h (|u_\omega|^{(m-1)/2}u_\omega))^2$, cf. \cite[Section~5.3]{PalRodVaz11},
the first term on the left hand side of \eqref{S3: weak sol3} satisfies that
$$
\int_0^T \int_\M \zeta (\delta^h u_\omega ) \delta^h (\Phi(u_\omega)) \, d\mu_g\,dt \geq c \int_0^T \int_\M \zeta (\delta^h (|u_\omega|^{(m-1)/2}u_\omega))^2 \, d\mu_g\,dt.
$$
The  second  and third terms on the left hand side of \eqref{S3: weak sol3} can be estimated as follows 
\begin{align*}
\notag   \Big|\int_0^T \int_\M \zeta    (\Phi(u_\omega))^h   \partial_t(\Phi(u_\omega))^h \, d\mu_g\,dt \Big|  
\leq &  C\int_{\tau^\prime}^{S^\prime} \int_\M |\zeta^\prime | [   (\Phi(u_\omega))^h ]^2\, d\mu_g\,dt\\
\leq & C \int_{\tau^\prime}^{S^\prime} \int_\M   [  (\Phi(u_\omega))^h ]^2\, d\mu_g\,dt  
\end{align*}
and similarly
\begin{align*}
\notag & \Big|\int_0^T \int_\M \zeta  (-\Delta)^{\sigma/2} (\Phi(u_\omega))^h (-\Delta)^{\sigma/2} \partial_t(\Phi(u_\omega))^h \, d\mu_g\,dt \Big| \\
\leq & C \int_{\tau^\prime}^{S^\prime} \int_\M   [ (-\Delta)^{\sigma/2} (\Phi(u_\omega))^h ]^2\, d\mu_g\,dt  .
\end{align*}
It follows from \eqref{S4: unif est 3} that
$$
\int_\tau^S \int_\M  (\delta^h (|u_\omega|^{(m-1)/2}u_\omega))^2 \, d\mu_g\,dt \leq \int_0^T \int_\M \zeta (\delta^h (|u_\omega|^{(m-1)/2}u_\omega))^2 \, d\mu_g\,dt \leq C
$$
for some $C>0$ independent of $h$,
and thus
$$
\partial_t (|u_\omega|^{(m-1)/2}u_\omega)\in L_{2,loc}((0,T), L_2(\M) ).
$$
Since \cite[Theorems 1 and 2]{BenCra81} implies $u_\omega\in BV((\tau,T), L_1(\M))$ for any $\tau>0$, it then follows from \cite[Theorem~1.1]{BenGar95} that
%Now one can apply the same argument for \cite[Lemma~8.2 and Corollary~8.3]{Vaz07} and prove that 
$$
\partial_t u_\omega \in L_{\infty,loc}((0,T), L_1(\M)) 
$$
with 
$$
\|\partial_t u_\omega(t))\|_1 \leq \frac{2}{|m-1| t}\|u_0\|_1.
$$
This leads to 
$$
  (-\Delta_g)^\sigma \Phi(u_\omega) \in L_{\infty,loc}((0,T),L_1(\M)).
$$
\end{proof}

The uniqueness of the strong solution follows from the following lemma.
\begin{lem}\label{lem: unique strong sol}
Given $u_0\in L_1(\M)$ when $m\neq 1$ or $u_0\in L_2(\M)$ when $m= 1$, \eqref{S1: FPME} has at most one strong solution.
\end{lem}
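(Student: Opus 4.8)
The plan is to establish $L_1$-contraction for the difference of two strong solutions, which immediately yields uniqueness once the initial data coincide. I would first dispose of the linear case $m=1$: here \eqref{S1: FPME} reads $\partial_t u + (-\Delta)^\sigma u = 0$, and since $-(-\Delta)^\sigma$ generates a contraction $C_0$-semigroup on $L_2(\M)$ by Proposition~\ref{S2: Lap-frac invert}, the same duality test function used for $m=1$ in the proof of Proposition~\ref{Prop: exist strong sol} gives $u=\hat u$; alternatively this is immediate from linear semigroup theory with $u_0\in L_2(\M)$.

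For $m\neq 1$, let $u,\hat u$ be two strong solutions with common initial datum $u_0$, and set $v:=u-\hat u$ and $w:=\Phi(u)-\Phi(\hat u)$. Subtracting the two equations gives $\partial_t v + (-\Delta)^\sigma w = 0$, where by Definition~\ref{Def: solution} both $\partial_t v$ and $(-\Delta)^\sigma w$ lie in $L_{\infty,loc}((0,\infty),L_1(\M))$. Choose a smooth, odd, nondecreasing approximation $p_\varepsilon$ of $\sg$ with $p_\varepsilon(0)=0$, $|p_\varepsilon|\leq 1$, and $p_\varepsilon(s)\to \sg(s)$ as $\varepsilon\to 0^+$. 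Pairing the difference equation with $p_\varepsilon(w)$ and integrating over $\M$ yields
\begin{equation*}
\int_\M \partial_t v\, p_\varepsilon(w)\, d\mu_g + \int_\M (-\Delta)^\sigma w\, p_\varepsilon(w)\, d\mu_g = 0.
\end{equation*}

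The decisive point is that the second integral is nonnegative. This is where the Markovian nature of $-(-\Delta)^\sigma$ from Proposition~\ref{S2: Markov semigroup frac} enters: the associated Dirichlet form has the Beurling--Deny jump representation, so pairing $(-\Delta)^\sigma w$ against any nondecreasing Lipschitz function of $w$ vanishing at $0$ produces a nonnegative quantity --- precisely the mechanism underlying Lemma~\ref{S2: general S-V ineq}. Hence $\int_\M \partial_t v\, p_\varepsilon(w)\, d\mu_g\leq 0$. It then remains to let $\varepsilon\to 0^+$: since $\Phi$ is strictly increasing, $\sg(w)=\sg(v)$ on $\{v\neq 0\}$, and since $v\in W^{1,1}_{loc}((0,\infty),L_1(\M))$ the map $t\mapsto \|v(t)\|_1$ is absolutely continuous with $\tfrac{d}{dt}\|v(t)\|_1=\int_\M \partial_t v\,\xi\, d\mu_g$ for any measurable selection $\xi\in\sg(v)$, the contribution of $\{v=0\}$ being null. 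Passing to the limit gives $\tfrac{d}{dt}\|v(t)\|_1\leq 0$, and since $v(0)=0$ we conclude $\|u(t)-\hat u(t)\|_1\equiv 0$, i.e. $u=\hat u$.

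I expect the main obstacle to be the two limiting/regularity steps rather than the algebra. First, justifying rigorously that $\int_\M (-\Delta)^\sigma w\, p_\varepsilon(w)\, d\mu_g\geq 0$ and that $p_\varepsilon(w)$ is an admissible pairing requires moving from the integral pairing to the Dirichlet form, using that $w(t)\in D((-\Delta_1)^\sigma)$ for a.e.\ $t$ together with the fact that $p_\varepsilon(w)\in D((-\Delta)^{\sigma/2})$ as the image of $w$ under a normal contraction. Second, the chain rule for the $L_1$-norm together with the careful treatment of the set $\{v=0\}$ (where $\partial_t v$ need not vanish pointwise but integrates to zero against the sign selection) must be invoked. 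Both facts are standard in the $L_1$-theory of nonlinear diffusions, but must be handled with the nonlocality of $(-\Delta)^\sigma$ in mind; indeed, the $m$-accretivity already recorded in Proposition~\ref{Prop: m-accretive} is exactly the abstract shadow of this computation and can be cited to streamline the positivity step.
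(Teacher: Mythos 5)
Your proposal is correct and takes essentially the same route as the paper: for $m\neq 1$ the paper's proof consists precisely of invoking Lemma~\ref{S2: general S-V ineq} together with the proof of \cite[Theorem~6.2]{PalRodVaz12}, i.e.\ pairing the difference equation with monotone (Heaviside-type) approximations of the sign of $\Phi(u_1)-\Phi(u_2)$ and using the Markovian/Stroock--Varopoulos positivity to get the $T$-contraction $\int_\M (u_1-u_2)_+(t_2)\, d\mu_g \leq \int_\M (u_1-u_2)_+(t_1)\, d\mu_g$, which is your computation with the positive part in place of the absolute value. For $m=1$ the paper uses the direct $L_2$ energy estimate $\int_\M \partial_t(u_1-u_2)(u_1-u_2)\, d\mu_g = -\|(-\Delta)^{\sigma/2}(u_1-u_2)\|_2^2\leq 0$ rather than your duality/semigroup citation, but both arguments are standard and the difference is immaterial.
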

\begin{proof}
When $m\neq 1$, if $u_1$, $u_2$, are strong solutions to \eqref{S1: FPME} with
initial data $u_{0,1}, u_{0,2} \in L_1(\M)$, then it follows from Lemma~\ref{S2: general S-V ineq} and the proof of \cite[Theorem~6.2]{PalRodVaz12} that  
for every $0\leq t_1<t_2$ it holds
$$
\int_\M (u_1-u_2)_+ (t_2)\, d\mu_g \leq \int_\M (u_1-u_2)_+ (t_1)\, d\mu_g.
$$
When $m=1$, if $u_1$, $u_2$, are strong solutions to \eqref{S1: FPME} with
initial data $u_{0,1}, u_{0,2} \in L_2(\M)$, then  we have
$$
\int_\M \partial_t (u_1-u_2) ( u_1-u_2)\, d\mu_g = - \int_\M | (-\Delta)^{\sigma/2} (u_1-u_2)|^2 \, d\mu_g \leq 0,
$$
which implies that for every $0< t_1<t_2$
$$
\|(u_1-u_2)(t_2)\|_2 \leq  \|(u_1-u_2)(t_1)\|_2.
$$
The fact that $u_1,u_2\in C([0,\infty),L_2(\M))$ then implies the uniqueness of strong solution.
\end{proof}

\subsection{\bf Proof of Theorem~\ref{S4: global weak sol}}\label{Section 5.4}

\begin{proof}
(of Theorem~\ref{S4: global weak sol})
We have already proved the existence and uniqueness  of a strong solution.
The additional properties (I)-(III) follow from the proof of \cite[Theorem~6.1]{RoidosShao18}.
\end{proof} 

Before concluding this section, we will prove two useful properties of the solutions.
\begin{lem}\label{S4: Lyapunov stable semigroup}
Suppose that $u,\hat{u}$ are strong solutions to \eqref{S1: FPME} with respect to the initial data $u_0,\hat{u}_0$ obtained by the above argument. Then for any $t\geq 0$
$$
\| u(t)- \hat{u}(t)\|_1 \leq \| u_0- \hat{u}_0\|_1.
$$
\end{lem}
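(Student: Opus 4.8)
The plan is to read the $L_1$-contraction directly off the construction of the solutions, exploiting that the underlying operator is $m$-accretive on $L_1(\M)$. Recall from Proposition~\ref{Prop: m-accretive} that $\cA=[\omega+(-\Delta_1)^\sigma]\Phi$ is $m$-accretive on $L_1(\M)$ for every $\omega\ge 0$; equivalently, for each $\lambda>0$ the resolvent $J_\lambda:=(I+\lambda\cA)^{-1}$ is a well-defined contraction on $L_1(\M)$, i.e. $\|J_\lambda f-J_\lambda g\|_1\le\|f-g\|_1$. Since the present lemma only involves the genuine norm, the whole argument reduces to propagating this one-step contraction through the Crandall-Liggett scheme.

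Concretely, I would fix $\omega>0$ and note that the discretized scheme \eqref{S4: DPME} is precisely $u_{n,k;\omega}=J_{T/n}u_{n,k-1;\omega}$ and $\hat u_{n,k;\omega}=J_{T/n}\hat u_{n,k-1;\omega}$; applying the resolvent contraction and inducting on $k$ gives $\|u_{n,k;\omega}-\hat u_{n,k;\omega}\|_1\le\|u_0-\hat u_0\|_1$ for all $k$, hence $\|u_{n;\omega}(t)-\hat u_{n;\omega}(t)\|_1\le\|u_0-\hat u_0\|_1$ for the piecewise-constant interpolants at every $t\in[0,T]$. Passing to the uniform Crandall-Liggett limit in $C([0,T],L_1(\M))$ via \eqref{S5: mild sol def and converg} preserves the bound, giving $\|u_\omega(t)-\hat u_\omega(t)\|_1\le\|u_0-\hat u_0\|_1$; finally letting $\omega\to0^+$ and using that $u_\omega\to u$ and $\hat u_\omega\to\hat u$ in $C([0,T],L_1(\M))$ yields the claim for arbitrary $T>0$, hence for all $t\ge 0$.

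An equally short alternative, valid when $m\ne 1$, is to invoke the one-sided estimate already produced in the proof of Lemma~\ref{lem: unique strong sol} (itself a consequence of the generalized Strook-Varopoulos inequality, Lemma~\ref{S2: general S-V ineq}): taking $t_1=0$ there gives $\int_\M(u-\hat u)_+(t)\,d\mu_g\le\int_\M(u_0-\hat u_0)_+\,d\mu_g$, and adding the companion inequality with $u$ and $\hat u$ interchanged, via $|u-\hat u|=(u-\hat u)_++(\hat u-u)_+$, reproduces the $L_1$-bound; for $m=1$ linearity and the Markov property of $e^{-t(-\Delta)^\sigma}$ give it at once. I do not anticipate a genuine obstacle: the analytic substance (accretivity, or equivalently the Strook-Varopoulos order contraction) is already established, and the only points needing care are the $L_1$-contractivity of the resolvent step and the identification of the limits of the approximate solutions, both of which are routine given the earlier results.
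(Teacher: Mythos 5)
Your main argument is correct and is essentially the paper's own proof: the paper likewise reads the contraction off the discretized scheme \eqref{S4: DPME} via Proposition~\ref{S4: semilinear thm} (which is exactly the $L_1$-resolvent contraction you invoke through Proposition~\ref{Prop: m-accretive}), then passes to the Crandall--Liggett limit via \eqref{S5: mild sol def and converg}, and finally lets $\omega\to 0^+$ using the convergence of $u_\omega,\hat{u}_\omega$ to $u,\hat{u}$ in $C([0,T],L_1(\M))$.
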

\begin{proof}
Proposition~\ref{S4: semilinear thm} implies that
$
\| u_{n;\omega}(t)- \hat{u}_{n;\omega}(t)\|_1 \leq \| u_0- \hat{u}_0\|_1,
$
and by \eqref{S5: mild sol def and converg}. 
$$
\| u_{\omega}(t)- \hat{u}_{\omega}(t)\|_1 \leq \| u_0- \hat{u}_0\|_1.
$$
The assertion then  follows from the  convergence of $u_{\omega}, \hat{u}_{\omega}$ to $u,\hat{u}$ in $C([0,T],L_1(\M))$.
\end{proof}

\begin{lem}\label{S4: phi(u_omega) L infinity}
Given $u_0\in L_1(\M)\cap L_\infty(\M)$, for any $0<\tau$, the strong solution $u$ to \eqref{S1: FPME} satisfies
$$
\|(-\Delta )^{\sigma/2}\Phi(u)(t)\|_2\leq M=M(\|u(\tau)\|_\infty)  \quad \text{for a.a. } \tau<t.
$$
\end{lem}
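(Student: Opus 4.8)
The plan is to test the equation against $\Phi(u)$, turn the resulting $L_2$ Dirichlet energy into a product of an $L_1$ norm and an $L_\infty$ norm, and then invoke estimates that are already available.

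First I would record the energy identity. Since $u$ is a strong solution, for a.a. $t>0$ we have $\partial_t u(t),\,(-\Delta)^\sigma\Phi(u)(t)\in L_1(\M)$ with $\partial_t u=-(-\Delta)^\sigma\Phi(u)$, while $\Phi(u)(t)\in D((-\Delta)^{\sigma/2})$ by the weak-solution class. Because $u_0\in L_1(\M)\cap L_\infty(\M)$, the $L_\infty$-contraction of Theorem~\ref{S4: global weak sol}(II) (with $q=\infty$) keeps $u(t)$ bounded, so $\Phi(u)=|u|^{m-1}u$ is bounded as well. Pairing the equation with $\Phi(u)(t)\in L_\infty(\M)$ then gives, for a.a.\ $t$,
$$
\|(-\Delta)^{\sigma/2}\Phi(u)(t)\|_2^2=\int_\M(-\Delta)^\sigma\Phi(u)(t)\,\Phi(u)(t)\,d\mu_g=-\int_\M\partial_t u(t)\,\Phi(u)(t)\,d\mu_g .
$$

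Next I would apply H\"older's inequality to the last integral and insert the two known bounds:
$$
\|(-\Delta)^{\sigma/2}\Phi(u)(t)\|_2^2\leq\|\partial_t u(t)\|_1\,\|\Phi(u)(t)\|_\infty=\|\partial_t u(t)\|_1\,\|u(t)\|_\infty^{m}.
$$
The B\'enilan--Crandall type bound obtained in the proof of Proposition~\ref{Prop: exist strong sol}, namely $\|\partial_t u(t)\|_1\leq\frac{2}{|m-1|t}\|u_0\|_1$, together with $\|u(t)\|_\infty\leq\|u(\tau)\|_\infty$ for $t\geq\tau$ (again Theorem~\ref{S4: global weak sol}(II)), yields for a.a.\ $t>\tau$
$$
\|(-\Delta)^{\sigma/2}\Phi(u)(t)\|_2^2\leq\frac{2\|u_0\|_1}{|m-1|\,\tau}\,\|u(\tau)\|_\infty^{m}=:M^2,
$$
which is the assertion, with $M$ depending on $\|u(\tau)\|_\infty$ and on the fixed quantities $m,\sigma,\tau,\|u_0\|_1$. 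In the excluded linear case $m=1$ one argues instead from the analyticity of $\{e^{-t(-\Delta)^\sigma}\}$ (Proposition~\ref{S2: Lap-frac invert}): the spectral theorem gives $\|(-\Delta)^{\sigma/2}u(t)\|_2^2=\langle(-\Delta)^\sigma u(t),u(t)\rangle\leq\frac{C}{\tau}\|u(\tau/2)\|_2^2$ for $t\geq\tau$, and $\|u(\tau/2)\|_2^2\leq\|u(\tau/2)\|_\infty\|u(\tau/2)\|_1$ is finite.

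I expect the main obstacle to be the rigorous justification of the first display, i.e.\ that the $L_1$--$L_\infty$ pairing $\int_\M(-\Delta)^\sigma\Phi(u)\,\Phi(u)\,d\mu_g$ equals the $L_2$ Dirichlet form $\|(-\Delta)^{\sigma/2}\Phi(u)\|_2^2$, since the strong solution only provides $(-\Delta)^\sigma\Phi(u)(t)\in L_1(\M)$ and not membership of $\Phi(u)(t)$ in the $L_2$-generator domain $D((-\Delta_2)^\sigma)$. I would settle this by regularizing $w:=\Phi(u)(t)$ through the subordinated Markov semigroup, setting $w_\varepsilon=e^{-\varepsilon(-\Delta)^\sigma}w$: then $w_\varepsilon\in D((-\Delta_2)^\sigma)$, the clean identity $\langle(-\Delta_2)^\sigma w_\varepsilon,w_\varepsilon\rangle=\|(-\Delta)^{\sigma/2}w_\varepsilon\|_2^2$ holds by self-adjointness, and one passes to the limit $\varepsilon\to0^+$ using $w_\varepsilon\to w$ in $D((-\Delta)^{\sigma/2})$, the $L_1$-convergence $(-\Delta)^\sigma w_\varepsilon=e^{-\varepsilon(-\Delta)^\sigma}(-\Delta_1)^\sigma w\to(-\Delta_1)^\sigma w$, the uniform bound $\|w_\varepsilon\|_\infty\leq\|w\|_\infty$ from the Markov property, and dominated convergence. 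Alternatively, since the discrete scheme already furnishes the clean identity \eqref{S4: weak est 1} for the approximants $u_{n,k;\omega}$, one may pass to the limit there instead; in either reduction the consistency $(-\Delta_1)^\sigma=(-\Delta_2)^\sigma$ on $L_1(\M)\cap L_2(\M)$ established in Section~\ref{Section 3} is what makes the argument legitimate.
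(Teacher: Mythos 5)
Your argument is correct. Note that the paper itself gives no details for this lemma: it is disposed of by citing the argument leading to (5.24) in the companion work \cite{RoidosShao18}, and your chain --- pairing the equation with $\Phi(u)$, estimating the pairing by $\|\partial_t u(t)\|_1\,\|u(t)\|_\infty^m$, and then invoking the B\'enilan--Crandall bound $\|\partial_t u(t)\|_1\le \tfrac{2}{|m-1|t}\|u_0\|_1$ from the proof of Proposition~\ref{Prop: exist strong sol} together with the $L_\infty$-contraction of Theorem~\ref{S4: global weak sol}(II) --- is precisely the natural self-contained version of that argument. Your semigroup regularization of the pairing identity $\int_\M (-\Delta_1)^\sigma w\, w\, d\mu_g=\|(-\Delta)^{\sigma/2}w\|_2^2$ for $w=\Phi(u)(t)\in D((-\Delta_1)^\sigma)\cap D((-\Delta)^{\sigma/2})\cap L_\infty(\M)$ correctly handles the one genuinely delicate point, using exactly the consistency of the $L_1$ and $L_2$ subordinated semigroups established in Section~\ref{Section 3}. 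Two harmless remarks: first, your $M$ also depends on $\|u_0\|_1$, $\tau$ and $m$; this is unavoidable (on a manifold of infinite volume $\|u(\tau)\|_\infty$ alone controls no integrated quantity), is no looser than the lemma's shorthand $M=M(\|u(\tau)\|_\infty)$, and suffices for the lemma's only application, namely the uniform boundedness of $\Phi[\gamma(u(\tau))]$ in $D((-\Delta)^{\sigma/2})$ in Section~\ref{Section 7.3}. Second, the time-derivative bound is stated in the paper for the approximants $u_\omega$ with $\omega\ge 0$, so for $u=\lim_{\omega\to 0^+}u_\omega$ one should add the one-line observation that the bound is uniform in $\omega$ and passes to the limit through difference quotients (i.e. $\|u(t+h)-u(t)\|_1\le \tfrac{2h}{|m-1|t}\|u_0\|_1$, then divide by $h$), or simply read Proposition~\ref{Prop: exist strong sol} at $\omega=0$ as the paper does.
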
 
\begin{proof}
The assertion can be proved by following the argument leading to \cite[(5.24)]{RoidosShao18}. 
\end{proof}

%%%%%%%%%%%%%%%%%%%%%%%%%%%%%%%%%%%%%%%

%%%%%%%%%%%%%%%%%%%%%%%%%%%%%%%%%%%%%%%%%%%%%%%%%%%%%%%%%

\section{\bf Asymptotic Behavior: Complete and Non-compact Manifolds}\label{Section 6}

In this and the next section, we always assume that $m>1$. 
We first consider the case $u_0 \in L_1(\M)\cap L_\infty(\M)$.
The initial condition  will be weakened in Section~\ref{Section 6.2}.

\subsection{\bf Asymptotic behavior for $u_0\in L_1(\M) \cap L_\infty(\M)$} 
\label{Section 6.1}

Given $\omega>0$, for the strong solution $u_{\omega}$ to \eqref{S4-FPME-omega}, we put
$$
\phi_r(t):= \| u_\omega(t) \|_r^r.
$$
Our aim is to derive an ordinary differential inequality for $\ln \|u_\omega(s)\|_{r(s)} $. 
Note that \cite[Proposition~4]{BreStr73}, Fatou's Lemma  and \eqref{S5: mild sol def and converg} show  that for all $p\in [1,\infty]$
\begin{equation}
\label{contraction discrete general}
  \|u_{n,k;\omega}\|_p, \|u_\omega\|_p \leq \|u_0\|_p.
\end{equation} 
Moreover,  \eqref{S4: DPME} implies that $
( -\Delta)^\sigma  \Phi(u_{n,k;\omega}) \in L_1(\M) \cap L_\infty(\M)
$.

%The fact that  $u_0\geq 0$ and the proof for \cite[Theorem~6.1(I)]{RoidosShao18} implies that $u_{n,k;\omega} \geq 0$.
For $r\geq 2$, we multiply  \eqref{S4: DPME} by $|u_{n,k;\omega}|^{r-2} u_{n,k;\omega}$ and integrate over $\M$. Putting $d=r+m-1$, this yields
\begin{align}
\notag & \int_\M |u_{n,k;\omega}|^{r-2} u_{n,k;\omega} (u_{n,k-1;\omega} - u_{n,k;\omega})\, d\mu_g\\
\notag=& \Delta T_k \Big[ \omega\|   u_{n,k;\omega}  \|_d^d + \int_\M  ( -\Delta)^\sigma  \Phi(u_{n,k;\omega}) |u_{n,k;\omega}|^{r-2} u_{n,k;\omega} \, d\mu_g \Big] \\
\label{S5: lower bd phi disc}
\geq & \Delta T_k \Big[ \omega \|   u_{n,k;\omega}  \|_d^d +  \frac{4m(r-1)}{d^2} \int_\M | ( -\Delta)^{\sigma/2} |u_{n,k;\omega}|^{d/2} |^2\, d\mu_g \Big].
\end{align}
We have used Lemma~\ref{S2: S-V ineq} in \eqref{S5: lower bd phi disc}.

Note that we have to start from \eqref{S4: DPME}  instead of \eqref{S1: FPME}, as in general we do not know whether $\Phi(u) \in D((-\Delta_{d/m})^\sigma)$.

As before, one can derive from the H\"older and the Young's inequalities that
\begin{align*}
& \int_\M |u_{n,k;\omega}|^{r-2} u_{n,k;\omega}  (u_{n,k-1;\omega} - u_{n,k;\omega})\, d\mu_g   \\
\leq &  \frac{1}{r} \int_\M  \Big[|u_{n,k-1;\omega}|^r -|u_{n,k ;\omega}|^r \Big] \, d\mu_g <M
\end{align*}
for some $M>0$ independent of $n$ and $\omega$ by \eqref{contraction discrete general}.

For $t\in (0,T)$ and $h>0$ small so that $t+h<T$, without loss of generality, we may assume $t=t_i$, $t+h=t_j$ for some $i,j\in \{1,2,\cdots,n-1\}$. We sum over all $[t_{k-1},t_k)$ contained in $[t,t+h)$ and obtain
\begin{align}
\notag
& \frac{4m(r-1)}{d^2} \int_t^{t+h} \int_\M  | ( -\Delta)^{\sigma/2} |u_{n;\omega}(s)|^{d/2}  |^2\, d\mu_g\, ds  \\
\label{S5: control disc}
\leq  & \frac{1}{r} \int_\M \Big[ |u_{n;\omega}(t)|^r -|u_{n;\omega}(t+h)|^r \Big] \, d\mu_g.
\end{align}
Hence due to \eqref{contraction discrete general}
\begin{align}
\notag &\int_t^{t+h} \int_\M  | (-\Delta)^{\sigma/2} |u_\omega(s)|^{d/2} )|^2\, d\mu_g\, ds \\
\label{S5: weak converg}
\leq &
 \liminf\limits_{n\to \infty} \int_t^{t+h} \int_\M  | ( -\Delta)^{\sigma/2} |u_{n;\omega}(s)|^{d/2}  |^2\, d\mu_g\, ds<M.
\end{align}
By the interpolation theory, \eqref{S5: mild sol def and converg} and \eqref{contraction discrete general}, for any $1<q<\infty$
\begin{align*}
\| u_{n;\omega}(t) -u_\omega(t)\|_q\leq \| u_{n;\omega}(t) -u_\omega(t)\|_1^{1/q} \|u_{n;\omega}(t) -u_\omega(t)\|_\infty^{1-1/q}\to 0
\end{align*}
as $n\to \infty$. Thus we can control the right hand side of \eqref{S5: control disc} and obtain
\begin{align}
\notag &\frac{4m(r-1)}{d^2} \int_t^{t+h} \int_\M  | (-\Delta)^{\sigma/2} |u_\omega(s)|^{d/2}  |^2\, d\mu_g\, ds \\
\label{S5: control u}
\leq &   \frac{1}{r} \int_\M [|u_\omega(t)|^r -|u_\omega(t+h)|^r] \, d\mu_g 
\end{align}
for a.a. $h>0$ small. 
Based on \eqref{contraction discrete general} and the Dominated Convergence Theorem, dividing both sides by $h$ and letting $h\to 0$ yields
\begin{equation}
\label{S5: der phi_r}
\frac{d}{dt} \phi_r(t) \leq - \frac{4m r(r-1)}{(r+m-1)^2} \| (-\Delta)^{\sigma/2} |u_\omega(t)|^{\frac{r+m-1}{2}} \|_2^2,
\quad r\geq 2.
\end{equation}
%Note that, at this stage, we cannot let $\omega\to 0$, as we do not know whether $u_\omega(t)\to u(t)$ in $L_r(\M)$ in general.

Given any $p\geq 2$, we define a $C^1$ and non-decreasing function $r:[0,t)\to [p,\infty)$ such that $r(0)=p$ and $\lim\limits_{s\to t^-} r(s)= +\infty$. Put $d(s)= r(s)+m-1$. 

We set $\Phi(r,s):= \|u_\omega(s)\|_r^r$. Then \eqref{S5: der phi_r} yields
\begin{align}
 \frac{d}{ds} \Phi(r(s),s) 
\notag =& \frac{\partial}{\partial s}\Phi(r ,s)|_{r=r(s)} + \dot{r}(s) \frac{\partial}{\partial r}\Phi(r ,s)|_{r=r(s)}\\
\notag  \leq & - \frac{4mr(s)(r(s)-1)}{d^2(s)} \| (-\Delta)^{\sigma/2} |u_\omega(s)|^{d(s)/2}\|_2^2 \\
\label{S5: Phi der}
&+ \dot{r}(s)\int_\M \ln(|u_\omega(s)|) |u_\omega(s)|^{r(s)}\, d\mu_g.
\end{align}
Defining 
$$Y(s):=\ln\|u_\omega(s)\|_{r(s)}$$ 
and following \cite{BonGri05}, we introduce the Young functional  $J:[1,\infty)\times X$, where $X=\bigcap_{p=1}^\infty L_p(\M)$ is defined by
$$
J(r,u):=\int_\M \ln  \Big(\frac{|u|}{\|u\|_r}  \Big) \frac{|u|^r}{\|u\|^r_r}\, d\mu_g.
$$ 
One can compute by using \eqref{S5: Phi der} that
\begin{align}
\notag \frac{d}{ds} Y(s)  = & -\frac{\dot{r}(s)}{r^2(s)} \ln \|u_\omega(s)\|^{r(s)}_{r(s)} + \frac{1}{r(s)\|u_\omega(s)\|_{r(s)}^{r(s)}} \frac{d}{ds} \Phi(r(s),s) \\
\notag  \leq & -\frac{\dot{r}(s)}{r^2(s)} \ln \|u_\omega(s)\|^{r(s)}_{r(s)} -   \frac{4m(r(s)-1)}{d^2(s)} \frac{ \| (-\Delta)^{\sigma/2} | u_\omega(s)|^{d(s)/2}\|_2^2 }{\|u_\omega(s)\|_{r(s)}^{r(s)}}\\
 & + \frac{\dot{r}(s)}{r(s)\|u_\omega(s)\|_{r(s)}^{r(s)}}\int_\M \ln(|u_\omega(s)|) |u_\omega(s)|^{r(s)}\, d\mu_g \\
\label{S5: equation 1}
=& \frac{\dot{r}(s)}{r(s)}J(r(s),u_\omega(s)) -   \frac{4m(r(s)-1)}{d^2(s)} \frac{ \| (-\Delta)^{\sigma/2}| u_\omega(s)|^{d(s)/2}\|_2^2 }{\|u_\omega(s)\|_{r(s)}^{r(s)}}.
\end{align}
We have used \cite[Proposition~2.6(a)]{BonGri05} in the last step.

Note that it follows from \eqref{S5: weak converg} that for a.a $t\in (0,T)$, $|u_\omega(t)|^{d(t)/2}\in D((-\Delta)^{\sigma/2})$. So by Theorem~\ref{Thm: functional ineq A1}(vi), it holds that
\begin{align*}
 & \|( -\Delta)^{\sigma/2} |u_\omega(s)|^{d(s)/2}\|_2^2   \\
\geq & \frac{2\sigma}{\widehat{C} \varepsilon n}\int_\M |u_\omega(s)|^{d(s)} \ln \Big(\frac{ |u_\omega(s)|^{d(s)}}{\|u_\omega(s)\|_{d(s)}^{d(s)}} \Big) \,d\mu_g + \frac{1}{\widehat{C}\varepsilon}\|u_\omega(s)\|_{d(s)}^{d(s)} \ln\varepsilon\\
=&  \frac{1}{\widehat{C} \varepsilon  }\|u_\omega(s)\|_{d(s)}^{d(s)} \left[\frac{2\sigma}{n} J(1,|u_\omega(s)|^{d(s)}) + \ln\varepsilon \right].
\end{align*}
where $\widehat{C}$ is the constant in Theorem~\ref{Thm: functional ineq A1}(iv), and we have used the equality 
$$\||u_\omega(s)|^{ d(s)/2}\|_2^2=\|u_\omega(s)\|_{d(s)}^{d(s)}.$$
Plugging this inequality into \eqref{S5: equation 1}, one can infer that
\begin{align*}
\notag \frac{d}{ds} Y(s) \leq & \frac{\dot{r}(s)}{r(s)}J(r(s),u_\omega(s))\\
& -   \frac{4m(r(s)-1)}{d^2(s)\widehat{C} \varepsilon} \frac{ \|u_\omega(s)\|_{d(s)}^{d(s)} }{\|u_\omega(s)\|_{r(s)}^{r(s)}}\left[\frac{2\sigma}{n} J(1,|u_\omega(s)|^{d(s)}) + \ln\varepsilon \right].
\end{align*}
%Now we have arrived at the inequality in \cite[(4.1)]{BonGri05}. 
Taking
$$
\varepsilon= \frac{4m}{n \widehat{C}} \frac{ r(s) [2\sigma r(s) +n(m-1)](r(s)-1)}{\dot{r}(s) d^2(s) } \frac{ \|u_\omega(s)\|_{d(s)}^{d(s)} }{\|u_\omega(s)\|_{r(s)}^{r(s)}}
$$
and using \cite[Proposition~2.6(b)]{BonGri05}, we have
\begin{align*}
  \frac{d}{ds} Y(s) \leq & \frac{\dot{r}(s)}{r^2(s)} \left[ J(1,|u_\omega(s)|^{r(s)}) - \frac{2\sigma r(s)}{2\sigma r(s) 
  + n(m-1)}J(1,|u_\omega(s)|^{d(s)}) \right] \\
& - \frac{\dot{r}(s)}{r^2(s)}   \frac{n  r(s)}{2\sigma r(s) + n(m-1)} \ln \frac{ \|u_\omega(s)\|_{d(s)}^{d(s)} }{\|u_\omega(s)\|_{r(s)}^{r(s)}} \\
& - \frac{\dot{r}(s)}{r (s)}   \frac{ n  }{2\sigma r(s) + n(m-1)} \ln \Big[\frac{4m}{n \widehat{C}} \frac{ r(s) [2\sigma r(s) +n(m-1)](r(s)-1)}{\dot{r}(s) d^2(s) } \Big].
\end{align*}
It follows from \cite[(4.3)]{BonGri05} and \cite[Proposition~2.6(b)]{BonGri05} that
\begin{align*}
 \frac{d}{ds} Y(s) \leq & \frac{\dot{r}(s)}{r^2(s)} \left[ J(1,|u_\omega(s)|^{r(s)}) - \frac{2\sigma r(s)}{2\sigma r(s) 
 + n(m-1)}J(1,|u_\omega(s)|^{d(s)}) \right] \\
& - \frac{\dot{r}(s)}{r^2(s)}   \frac{n  r(s) (m-1)}{2\sigma r(s) + n(m-1)} \Big[ J(r(s),u_\omega(s)) +Y(s) \Big] \\
& - \frac{\dot{r}(s)}{r (s)}   \frac{ n  }{2\sigma r(s) + n(m-1)} \ln \Big[\frac{4m}{n \widehat{C}} \frac{ r(s) [2\sigma r(s) +n(m-1)](r(s)-1)}{\dot{r}(s) d^2(s) } \Big]\\
=& \frac{\dot{r}(s)}{r (s)}\frac{2\sigma }{2\sigma r(s) + n(m-1)} \Big[ J(1,|u_\omega(s)|^{r(s)}) -  J(1,|u_\omega(s)|^{d(s)}) \Big]\\
& - \frac{\dot{r}(s)}{r (s)}   \frac{n  (m-1)}{2\sigma r(s) + n(m-1)}Y(s)\\
& - \frac{\dot{r}(s)}{r (s)}   \frac{ n   }{2\sigma r(s) + n(m-1)} \ln \Big[ \frac{4m}{n \widehat{C}} \frac{ r(s) [2\sigma r(s) +n(m-1)](r(s)-1)}{\dot{r}(s) d^2(s) } \Big].
\end{align*}
Taking into consideration \cite[Proposition~2.6(d)]{BonGri05} and $m>1$, we have
$$
J(1,|u_\omega(s)|^{r(s)}) -  J(1,|u_\omega(s)|^{d(s)}) \leq 0.
$$
Since $r$ is non-decreasing, by putting
$$
p(s)=\frac{\dot{r}(s)}{r (s)}   \frac{n  (m-1)}{2\sigma r(s) + n(m-1)}
$$ 
and
$$
q(s)=\frac{\dot{r}(s)}{r (s)}   \frac{ n  }{2\sigma r(s) + n(m-1)} \ln \Big[\frac{4m}{n \widehat{C}} \frac{ r(s) [2\sigma r(s) +n(m-1)](r(s)-1)}{\dot{r}(s) d^2(s) } \Big],
$$
we arrive at
$$
\frac{d}{ds}Y(s) + p(s)Y(s) + q(s) \leq 0,\quad Y(0)=\ln\|u_0\|_p.
$$
Hence $Y(s)\leq Y_L(s)$, where
$$
Y_L(s)=e^{-\int_0^s p(a)\, da} \Big[Y(0) - \int_0^s q(a) e^{\int_0^a p(\tau) \, d\tau} \, da \Big]
$$
is the solution of 
$$
\frac{d}{ds}Y_L(s) + p(s)Y_L(s) + q(s) = 0,\quad Y_L(0)=\ln\|u_0\|_p.
$$
By taking $r(s)=pt/(t-s)$, one can compute
\begin{align*}
P(s)&= \int_0^s p(a)\, da = \int_0^s \frac{\dot{r}(a)}{r (a)}   \frac{n  (m-1)}{2\sigma r(a) + n(m-1)}\,da\\
&=\ln \Big[\frac{2\sigma r(s)}{2\sigma r(s)+n(m-1)} \frac{2\sigma p + n(m-1)}{2\sigma p} \Big],
\end{align*}
and  it holds
$$
\lim\limits_{s\to t^-} e^{-P(s)}= \frac{2\sigma p}{2\sigma p + n(m-1)};
$$
and since  $\dot{r}(s)=\frac{r^2(s)}{pt}$, we further have
$$
q(a)e^{P(a)}= \frac{2\sigma \dot{r}(a)n [2\sigma p + n(m-1)]}{2\sigma p   [2\sigma r(a)+n(m-1)]^2} \ln \Big[\frac{4m}{n\widehat{C}} \frac{[2\sigma r(a) +n(m-1)](r(a)-1)}{r(a)[ r(a)+m-1]^2 } pt \Big].
$$
This implies
\begin{align*}
\lim\limits_{s\to t^-}\int_0^s q(a)e^{P(a)}\, da = R + \frac{n}{2\sigma p }\ln t
\end{align*}
for some $R=R(p,\sigma,m,n,\widehat{C})$ but independent of $\omega$. To sum up,  we have
$$
Y_L(t)=\frac{2\sigma p}{2\sigma p + n(m-1)} \ln\|u_0\|_p - \frac{n}{ 2\sigma p + n(m-1) }\ln t +R,
$$
and thus 
$$
\ln \| u_\omega(t)\|_\infty=\lim\limits_{s\to t^-} \ln \| u_\omega(s)\|_{r(s)}\leq \lim\limits_{s\to t^-}Y(s)\leq \lim\limits_{s\to t^-}Y_L(s)=Y_L(t).
$$
This yields
\begin{equation}
\label{S5: bound for u_omega}
\| u_\omega(t)\|_\infty \leq \frac{e^R}{t^\alpha}\|u_0\|_p^\gamma, 
\end{equation}
where $\displaystyle \alpha=\frac{n}{ 2\sigma p + n(m-1)}$ and $\displaystyle \gamma=\frac{2\sigma p}{2\sigma p + n(m-1)}$. Because the constants in \eqref{S5: bound for u_omega} are independent of $\omega$ and, for all $t$, $u_\omega(t)$ converges to $u(t)$ pointwise a.e. on $\M$, we immediately conclude that
\begin{equation}
\label{S5: bound for u}
\| u(t)\|_\infty \leq \frac{e^R}{t^\alpha}\|u_0\|_p^\gamma,\quad p\geq 2,
\end{equation}
where $u$ is the unique strong solution to \eqref{S1: FPME}.

%%%%%%%%%%%%%%%%%%%%%%%%%%%%%%%%%%%%%%%%%%%%%%%%%%%%%%%%%%%%%%

\subsection{\bf Proof of Theorem~\ref{S5: Large time behavior}}\label{Section 6.2}

\begin{proof}
(of Theorem~\ref{S5: Large time behavior})
Given  $  u_0 \in L_1(\M)\cap L_2(\M) $,  
we take a sequence $L_1(\M)\cap L_\infty(\M)\ni u_{0,k} \to u_0$ in $L_1(\M)\cap L_2(\M)$ and denote the corresponding strong solutions to \eqref{S1: FPME} by $u_k$. We learn from Lemma~\ref{S4: Lyapunov stable semigroup} that $(u_k)_k$ is Cauchy in $C([0,T],L_1(\M))$ and thus converges to some $u\in C([0,T],L_1(\M))$ for any $T>0$.

For every $0<\tau<\infty$, it follows from \eqref{S5: bound for u}  that $u_k \in L_\infty([\tau ,\infty),L_\infty(\M))$ with uniform bounds.
By the interpolation theory, $\|u_k(\tau)\|_{m+1}$ is uniformly bounded in $k$.
\eqref{S4: unif est 3} implies that 
$$
(-\Delta)^{\sigma/2}\Phi(u_k)\in L_2([\tau,\infty),L_2(\M))
$$
with uniform bound. 
Now we can pass the limit $k\to \infty$ in 
\begin{align*}
 \int^\infty_\tau\int_\M u_k  \partial_t \phi \, d\mu_g dt + \int_\M u_k(\tau)\phi(\tau)\, d\mu_g   
=  \int_\tau^\infty \int_\M ( -\Delta )^{\sigma/2} \Phi(u_k)  ( -\Delta )^{\sigma/2}  \phi \, d\mu_g dt   
\end{align*}
for any $\phi\in C^1_c([0,\infty)\times\M)$,
and infer that $u$ is a weak solution to \eqref{S1: FPME} on $[\tau,\infty)$. 
Since $u(0)=u_0$ and $u\in C([0,\infty),L_1(\M))$, 
\begin{align*}
  &\int_0^\infty \int_\M   ( -\Delta )^{\sigma/2} \Phi(u )  ( -\Delta )^{\sigma/2}  \phi \, d\mu_g dt   \\
=& \lim\limits_{\tau\to 0^+} \int_\tau^\infty \int_\M   ( -\Delta )^{\sigma/2} \Phi(u )  ( -\Delta )^{\sigma/2} \, d\mu_g dt \\
=& \lim\limits_{\tau\to 0^+} \int^\infty_\tau\int_\M u   \partial_t \phi \, d\mu_g dt + \lim\limits_{\tau\to 0^+}\int_\M u (\tau)\phi(\tau)\, d\mu_g   \\
= & \int^\infty_0\int_\M u   \partial_t \phi \, d\mu_g dt  +\int_\M u_0\phi(0)\, d\mu_g  .
\end{align*}
Thus $u$ is a weak solution to \eqref{S1: FPME} on $[0,\infty)$.
To see $u$ is indeed a strong solution, it suffices to observe that $u  \in L_\infty([\tau,\infty)\times 
\M)$ for any $\tau>0$. Then the proof of Proposition~\ref{Prop: exist strong sol} is still valid.
The uniqueness of   solution follows from   Lemma~\ref{lem: unique strong sol}.
By the approximation argument above, \eqref{S5: bound for u} still holds true for $u$.
\end{proof}

%%%%%%%%%%%%%%%%%%%%%%%%%%%%%%%%%%%%%%%%%%%%%%%%%%%%%%%%%

\section{\bf Asymptotic Behavior: Closed Manifolds}\label{Section 7}

\subsection{\bf Large time behavior for $u_0\in   L_\infty(\M)$} 
\label{Section 7.1}

The argument for the closed manifold of dimension $n>2$ is very similar to that in Section~\ref{Section 6.1} and thus we will only point out necessary modifications. First note that \eqref{S5: equation 1} still holds true. Then applying Theorem~\ref{Thm: functional ineq A2}(vi) yields
\begin{align*}
\| (-\Delta)^{\sigma/2} v\|^2_2 & \geq \frac{2\sigma}{n M_0 \varepsilon} \int_\M |v|^2 \ln \Big(\frac{|v|}{\|v\|_2^2} \Big)^2 + \frac{1}{M_0 \varepsilon} \|v\|_2^2 \ln \varepsilon - \frac{M_1}{M_0} |\overline{v}|^2.
\end{align*}
Using this inequality in \eqref{S5: equation 1} and setting $m_0= \max\{ m-1, 1\}$, we conclude that
\begin{align}
\notag \frac{d}{ds} Y(s) \leq & \frac{\dot{r}(s)}{r(s)} J(r(s), u_\omega(s)) - \frac{4m(r(s)-1)}{d^2(s)M_0 \varepsilon} \frac{2\sigma}{n } \frac{\|u_\omega(s)\|_{d(s)}^{d(s)}}{ \|u_\omega(s)\|_{r(s)}^{r(s)} } J(1, |u_\omega(s)|^{d(s)} ) \\
\notag  &- \frac{4m(r(s)-1)}{d^2(s) M_0 \varepsilon}\frac{\|u_\omega(s)\|_{d(s)}^{d(s)}}{ \|u_\omega(s)\|_{r(s)}^{r(s)} } \ln \varepsilon + \frac{4m(r(s)-1) M_1}{d^2(s) M_0} \frac{|\overline{|u_\omega(s)|^{d(s)/2 }}|^2}{ \|u_\omega(s)\|_{r(s)}^{r(s)} }\\
\notag  =& \frac{\dot{r}(s)}{r(s)} J(r(s), u_\omega(s)) -  \frac{4m(r(s)-1)}{d^2(s)M_0 \varepsilon}  \frac{\|u_\omega(s)\|_{d(s)}^{d(s)}}{ \|u_\omega(s)\|_{r(s)}^{r(s)} } [\frac{2\sigma}{n } J(1, |u_\omega(s)|^{d(s)} ) + \ln \varepsilon]\\
\notag  & + \frac{4m(r(s)-1) M_1}{d^2(s) M_0} \frac{|\overline{|u_\omega|^{d(s)/2 }(s)}|^2}{ \|u_\omega(s)\|_{r(s)}^{r(s)} }\\
 \notag   \leq & \frac{\dot{r}(s)}{r(s)} J(r(s), u_\omega(s)) -  \frac{8 \sigma m (r(s)-1)}{M_0 \varepsilon d(s) n} \frac{\|u_\omega(s)\|_{d(s)}^{d(s)}}{ \|u_\omega(s)\|_{r(s)}^{r(s)} }[J(d(s), u_\omega(s)) + \frac{n \ln \varepsilon}{2\sigma d(s)} ] \\
 \label{S6: eq 1}
&  +  \frac{4m(r(s)-1) M_1}{d^2(s) M_0} \|u_0\|_{m_0}^{m-1}.
\end{align}
In \eqref{S6: eq 1}, we have used the equality
$
\big|\overline{|v|^{d/2 } } \big|^2 = \|v\|_{d/2}^d,
$ 
the interpolation inequality
$$
\|v\|_{d/2}^d  \leq \|v\|_{m-1}^{m-1} \|v\|_r^r ,\quad r \geq m-1,
$$
the contraction property 
$$
\|u_\omega (s)\|_q \leq \|u_0\|_q ,\quad s\geq 0, \, q\in [1,\infty],
$$
which follows from the proof of \cite[Theorem~6.1(II)]{RoidosShao18}, and the reverse H\"older inequality (when $m<2$).

Taking 
$$
\varepsilon= \frac{8\sigma m r(s)(r(s)-1)}{\dot{r}(s) M_0 d(s) n} \frac{\|u_\omega(s)\|_{d(s)}^{d(s)}}{ \|u_\omega(s)\|_{r(s)}^{r(s)} },
$$
we infer that 
\begin{align*}
\frac{d}{ds} Y(s) \leq & \frac{\dot{r}(s)}{r(s)} \left[ J(r(s), u_\omega(s)) - J(d(s), u_\omega(s)) -\frac{n}{2\sigma d(s)} \ln (\frac{\|u_\omega(s)\|_{d(s)}^{d(s)}}{ \|u_\omega(s)\|_{r(s)}^{r(s)} }) \right]\\
&- \frac{\dot{r}(s)}{r(s)} \frac{n}{2\sigma d(s)}  \ln (\frac{8\sigma m r(s) (r(s)-1)}{\dot{r}(s) M_0 d(s) n })+  \frac{4m(r(s)-1) M_1}{d^2(s) M_0} \|u_0\|_{m_0}^{m-1}.
\end{align*}
It follows from \cite[Proposition~2.6(a)]{BonGri05} that
$$
J(r(s), u_\omega(s)) - J(d(s), u_\omega(s)) - \ln \Big(\frac{\|u_\omega(s)\|_{d(s)} }{ \|u_\omega(s)\|_{r(s)}  } \Big) \leq 0.
$$
This yields
\begin{align}
\notag \frac{d}{ds} Y(s) \leq &  \frac{\dot{r}(s)}{r(s)} \left[\ln (\frac{\|u_\omega(s)\|_{d(s)}}{ \|u_\omega(s)\|_{r(s)}  }) -\frac{n}{2\sigma d(s)} \ln (\frac{\|u_\omega(s)\|_{d(s)}^{d(s)}}{ \|u_\omega(s)\|_{r(s)}^{r(s)} }) \right] \\
\notag & - \frac{\dot{r}(s)}{r(s)} \frac{n}{2\sigma d(s)}  \ln \Big(\frac{8\sigma m r(s) (r(s)-1)}{\dot{r}(s) M_0 d(s) n } \Big)+  \frac{4m(r(s)-1) M_1}{d^2(s) M_0} \|u_0\|_{m_0}^{m-1} \\
\notag = & \frac{\dot{r}(s)}{r(s)} \left[ (1-\frac{n}{2\sigma}) \ln \|u_\omega(s)\|_{d(s)} + ( \frac{nr(s)}{2 \sigma d(s)} -1) \ln \|u_\omega(s)\|_{r(s)}  \right] \\
\notag & - \frac{\dot{r}(s)}{r(s)} \frac{n}{2\sigma d(s)}  \ln \Big(\frac{8\sigma m r(s) (r(s)-1)}{\dot{r}(s) M_0 d(s) n } \Big)+  \frac{4m(r(s)-1) M_1}{d^2(s) M_0} \|u_0\|_{m_0}^{m-1} \\
\label{S6: eq 2}
\leq & \frac{\dot{r}(s)}{r(s)} \left[ (1-\frac{n}{2\sigma}) \ln \|u_\omega(s)\|_{r(s)} + ( \frac{nr(s)}{2 \sigma d(s)} -1) \ln \|u_\omega(s)\|_{r(s)}  \right] \\
\notag & - \frac{\dot{r}(s)}{r(s)} \frac{n}{2\sigma d(s)}  \ln \Big(\frac{8\sigma m r(s) (r(s)-1)}{\dot{r}(s) M_0 d(s) n } \Big)+  \frac{4m(r(s)-1) M_1}{d^2(s) M_0} \|u_0\|_{m_0}^{m-1} \\
\notag = & -\frac{\dot{r}(s)}{r(s)}\frac{n}{2\sigma  } \Big(1- \frac{r(s)}{d(s)} \Big) \ln \|u_\omega(s)\|_{r(s)}\\
\notag &  - \frac{\dot{r}(s)}{r(s)} \frac{n}{2\sigma d(s)}  \ln \Big(\frac{8\sigma m r(s) (r(s)-1)}{\dot{r}(s) M_0 d(s) n } \Big)  
\notag   +  \frac{4m(r(s)-1) M_1}{d^2(s) M_0} \|u_0\|_{m_0}^{m-1} .
\end{align}
Here \eqref{S6: eq 2} follows from the facts $n>2\sigma$, ${\rm vol}(\M)=1$ and H\"older inequality
$$
\|u_\omega \|_r \leq \|u_\omega \|_d.
$$
We put 
$$
p(s)= \frac{\dot{r}(s)}{r(s)}\frac{n (m-1)}{2\sigma d(s) }  
$$
and
$$
q(s)=\frac{\dot{r}(s)}{r(s)} \frac{n}{2\sigma d(s)}  \ln \Big(\frac{8\sigma m r(s) (r(s)-1)}{\dot{r}(s) M_0 d(s) n } \Big)-  \frac{4m(r(s)-1) M_1}{d^2(s) M_0} \|u_0\|_{m_0}^{m-1}.
$$
Then we obtain the following differential inequality 
$$
\frac{d}{ds}Y(s) + p(s)Y(s) + q(s) \leq 0,\quad Y(0)=\ln\|u_0\|_p.
$$
As in Section~\ref{Section 6.1}, we have
$$
Y(s)\leq Y_L(s)=e^{-\int_0^s p(a)\, da} \Big[Y(0) - \int_0^s q(a) e^{\int_0^a p(\tau) \, d\tau} \, da \Big].
$$
One computes
\begin{align*}
P(s)= & \int_0^s p(a)\, da = \int_0^s  \frac{\dot{r}(a)}{r(a)}\frac{n (m-1)}{2\sigma (r(a) +m-1) }  \, da \\
= & \frac{n  }{2\sigma} \ln \Big[ \frac{r(s)}{d(s)} \frac{p+m-1}{p} \Big].
\end{align*}
This implies
$$
e^{-P(t)}= \lim\limits_{s\to t^-} e^{-P(s)} = \Big( \frac{p}{p+m-1} \Big)^{n/2\sigma}.
$$
Following \cite{BonGri05}, we have
\begin{align*}
Q(s)=& \int_0^s q(a) e^{\int_0^a p(\tau) \, d\tau} \, da \\
= & \underbrace{\int_0^s \frac{\dot{r}(a)}{r(a)} \frac{n}{2\sigma d(a)}  \ln  \Big(\frac{8\sigma m  }{  M_0  n } \Big)  \Big( \frac{r(a)}{d(a)} \Big)^{n/2\sigma}   \Big( \frac{p+m-1}{p} \Big)^{n/2\sigma}\, da}_{Q_1(s)}\\
& + \underbrace{\int_0^s \frac{\dot{r}(a)}{r(a)} \frac{n}{2\sigma d(a)}  \ln  \Big(\frac{r(s) (r(s)-1) }{ \dot{r}(s) d(s) } \Big)  \Big( \frac{r(a)}{d(a)} \Big)^{n/2\sigma}   \Big( \frac{p+m-1}{p}  \Big)^{n/2\sigma}\, da}_{Q_2(s)}\\
& - \underbrace{\int_0^s \frac{4m(r(s)-1) M_1}{d^2(s) M_0} \|u_0\|_{m_0}^{m-1}  \Big( \frac{r(a)}{d(a)} \Big)^{n/2\sigma}   \Big( \frac{p+m-1}{p}  \Big)^{n/2\sigma}\, da}_{Q_3(s)}.
\end{align*}
Letting
$$
I_1=I_1(m,p,n,\sigma)=\lim\limits_{s\to t^-} \int_0^s \frac{\dot{r}(a)}{r(a)d(a)}   \Big( \frac{r(a)}{d(a)} \Big)^{n/2\sigma}   \, da
$$
and
$$
I_2=I_2(m,p,n,\sigma)=\lim\limits_{s\to t^-}  \int_0^s \frac{\dot{r}(a)}{r(a)d(a)}   \Big( \frac{r(a)}{d(a)} \Big)^{n/2\sigma} \ln \Big(\frac{r(a)-1}{r(a) d(a)} \Big)   \, da
$$
gives
$$
Q_1(t)= I_1 \frac{n}{2\sigma  }  \Big(\frac{p+m-1}{p} \Big)^{n/2\sigma} \ln  \Big(\frac{8\sigma m  }{  M_0  n } \Big)     
$$
and
$$
Q_2(t)= I_1 \frac{n}{2\sigma  }  \Big(\frac{p+m-1}{p} \Big)^{n/2\sigma}  \ln(pt) +I_2   \frac{n}{2\sigma  }  \Big(\frac{p+m-1}{p} \Big)^{n/2\sigma} 
$$
as $\dot{r}(s)=\frac{r^2(s)}{pt}$.
Note that $1<  r(s)\leq d(s) $ for all $0\leq s < t$. We thus conclude that
\begin{align*}
Q_3(t)= & \frac{4m M_1}{M_0}  \Big(\frac{p+m-1}{p} \Big)^{n/2\sigma}    \|u_0\|_{m_0}^{m-1} \lim\limits_{s\to t^-}  \int_0^s \frac{r(a)-1}{d^2(a)} \Big( \frac{r(a)}{d(a)} \Big)^{n/2\sigma} \, da  \\
\leq & \frac{4m M_1}{M_0}  \Big(\frac{p+m-1}{p} \Big)^{n/2\sigma}    \|u_0\|_{m_0}^{m-1}  t.
\end{align*} 
To sum up, by putting
$$
\gamma= \Big(\frac{p}{p+m-1} \Big)^{n/2\sigma},  \quad   \alpha=I_1 \frac{n}{2\sigma  }    \quad \text{and} \quad  E=\frac{4m M_1}{M_0}   ,
$$
we finally arrive at
\begin{align*}
Y_L(t)\leq   \gamma \ln\|u_0\|_p + R(m,p,n,\sigma,M_0)-\alpha \ln(pt) + E\|u_0\|_{m_0}^{m-1}  t 
\end{align*}
for some constant $R(m,p,n,\sigma,M_0)$.  
This yields
\begin{equation}
\label{S6: asym for u omega}
\| u_\omega(t)\|_\infty \leq C \frac{e^{E \|u_0\|_{m_0}^{m-1}  t }}{t^\alpha}\|u_0\|_p^\gamma,\quad p\geq 2, 
\end{equation} 
and further
\begin{equation}
\label{S6: asym for u 1}
\| u(t)\|_\infty \leq C \frac{e^{E \|u_0\|_{m_0}^{m-1}  t }}{t^\alpha}\|u_0\|_p^\gamma,\quad p\geq 2. 
\end{equation}

%%%%%%%%%%%%%%%%%%%%%%%%%%%%%%%%%%%%%%%%%%%%%%%%%%%%%%%%%%%%%%%%%%%%%%%%

\subsection{\bf Passing to initial data in $ L_2(\M)$}
\label{Section 7.2}

We can follow the approximation procedure in Section~\ref{Section 6.2} and use a sequence $(u_{0,k})_k \in L_\infty(\M)$ to approximate an initial datum $u_0\in L_2(\M)$  in $L_2(\M)$. The corresponding  solutions   $(u_k)_k$ then converge to a strong solution $u$ to \eqref{S1: FPME}. This solution is unique due to Lemma~\ref{lem: unique strong sol}. \eqref{S6: asym for u 1} still holds true for $u$.

%%%%%%%%%%%%%%%%%%%%%%%%%%%%%%%%%%%%%%%%%%%%%%%%%%%%%%%%%%%%%%%%%%%%%%%%

\subsection{\bf Convergence of  solution for general initial data} 
\label{Section 7.3}

In this section, we assume that $u_0 \in L_2(\M)$ whose mean is not necessarily zero. 
We  prove that the unique strong solution $u$ to \eqref{S1: FPME} converges to the average of $u_0$ in $L_p(\M)$ for any $1\leq p <\infty$. The idea is based on the theory in \cite{Dafer78}.

%\begin{lem}\label{S6: lem positive initial data Lyapunov}
%Assume that $0<\alpha < u_0 < M<\infty$. Then the solution $u$ to \eqref{S1: FPME} satisfies
%$$
%\lim\limits_{t\to \infty} \| u(t)-  \frac{1}{{\rm vol}(\M)} \int_\M u_0 \, d\mu_g \|_\infty =0.
%$$
%\end{lem}
\begin{theorem}
\label{S6: convergence non-zero mean}
Assume that $m>1$.
Let $u_0\in L_2(\M)$ and $u$ be the unique strong solution to \eqref{S1: FPME}. Then for all $p\in [1,\infty)$
\begin{equation}
\label{S6: eq Lp convergence}
\lim\limits_{t\to \infty} \| u(t)-  \frac{1}{{\rm vol}(\M)} \int_\M u_0 \, d\mu_g \|_p =0.
\end{equation}
\end{theorem}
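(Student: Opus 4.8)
The plan is to treat \eqref{S1: FPME} as a gradient-like semiflow on $L_1(\M)$ and to apply the invariance principle of Dafermos \cite{Dafer78}: exhibit a Lyapunov functional, prove precompactness of trajectories, and show that the $\omega$-limit set reduces to the single constant $\overline{u_0}$ pinned down by conservation of mass. By Lemma~\ref{S4: Lyapunov stable semigroup} the solution map $S(t)\colon u_0\mapsto u(t)$ is an $L_1$-contraction, hence a continuous semiflow on $L_1(\M)$, and \eqref{S6: asym for u 1} gives $u(t)\in L_\infty(\M)$ for every $t>0$ with a bound depending only on $\|u_0\|_2$. Since $\lim_{t\to\infty}u(t)=\lim_{s\to\infty}S(s)u(1)$ and $u(1)\in L_\infty(\M)$, I may replace $u_0$ by $u(1)$ and assume without loss of generality that $u_0\in L_\infty(\M)$; by Theorem~\ref{S4: global weak sol}(II) the orbit $\{u(t):t\ge0\}$ is then bounded in $L_q(\M)$ for every $q\in[1,\infty]$, and mass is conserved so that $\overline{u(1)}=\overline{u_0}$.

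Next I would introduce the Lyapunov functional $V(u):=\frac{1}{m+1}\int_\M|u|^{m+1}\,d\mu_g$. Testing \eqref{S1: FPME} with $\Phi(u)$ (rigorously, against the Steklov averages used in Proposition~\ref{Prop: exist strong sol}) yields the dissipation identity
$$
\frac{d}{dt}V(u(t))=-\int_\M\Phi(u)\,(-\Delta)^\sigma\Phi(u)\,d\mu_g=-\|(-\Delta)^{\sigma/2}\Phi(u(t))\|_2^2\le0 .
$$
Thus $V(u(t))$ is nonincreasing and bounded below, hence converges to some $V_\infty$, and $\int_1^\infty\|(-\Delta)^{\sigma/2}\Phi(u(t))\|_2^2\,dt<\infty$. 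By Jensen's inequality and conservation of mass (Theorem~\ref{S4: global weak sol}(III)), $V$ attains its minimum over $\{v:\int_\M v=\int_\M u_0\}$ uniquely at the constant $\overline{u_0}$, strict convexity of $t\mapsto|t|^{m+1}$ excluding nonconstant minimizers.

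The main obstacle is precompactness of the orbit. Here I would invoke Lemma~\ref{S4: phi(u_omega) L infinity}: since $\|u(t)\|_\infty\le\|u_0\|_\infty$ is bounded for $t\ge1$, the quantity $\|(-\Delta)^{\sigma/2}\Phi(u(t))\|_2$ is bounded uniformly in $t\ge1$; together with the uniform $L_2$-bound on $\Phi(u(t))=|u(t)|^{m-1}u(t)$ (a consequence of the uniform $L_\infty$-bound and ${\rm vol}(\M)<\infty$), this bounds $\Phi(u(t))$ in $H^\sigma_2(\M)$. Because $\M$ is closed, the embedding $H^\sigma_2(\M)\hookrightarrow L_2(\M)$ is compact, so $\{\Phi(u(t)):t\ge1\}$ is precompact in $L_2(\M)$. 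Since $\beta=\Phi^{-1}$ is continuous and the orbit is uniformly bounded, extracting an a.e.\ convergent subsequence and applying the Dominated Convergence Theorem shows that $\{u(t):t\ge1\}$ is precompact in $L_p(\M)$ for every $p\in[1,\infty)$.

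With precompactness established, the $\omega$-limit set $\mathcal{W}(u_0):=\{w:\,u(t_k)\to w\text{ in }L_2\text{ for some }t_k\to\infty\}$ is nonempty, compact, connected and positively invariant under the semiflow. By continuity of $V$ on bounded subsets of $L_\infty$ and the convergence $V(u(t))\to V_\infty$, the functional $V$ equals $V_\infty$ on all of $\mathcal{W}(u_0)$. For any $w\in\mathcal{W}(u_0)$ the forward orbit $t\mapsto S(t)w$ remains in $\mathcal{W}(u_0)$, so $V(S(t)w)\equiv V_\infty$ and the dissipation identity forces $\|(-\Delta)^{\sigma/2}\Phi(w)\|_2=0$. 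Hence $\Phi(w)$ is spatially constant; since $\M$ is connected, $w$ is a constant, and conservation of mass gives $w=\overline{u_0}$. Therefore $\mathcal{W}(u_0)=\{\overline{u_0}\}$, which yields $u(t)\to\overline{u_0}$ in $L_2(\M)$ and, by the precompactness together with the uniform $L_\infty$-bound and interpolation, in $L_p(\M)$ for every $p\in[1,\infty)$, establishing \eqref{S6: eq Lp convergence}.
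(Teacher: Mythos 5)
Your proposal is correct and follows essentially the same route as the paper's proof: a Dafermos-type invariance principle with the Lyapunov functional $V(\xi)=\frac{1}{m+1}\int_\M|\xi|^{m+1}\,d\mu_g$, precompactness of the orbit obtained from the uniform $D((-\Delta)^{\sigma/2})\doteq H^\sigma_2(\M)$ bound on $\Phi(u(t))$ (Lemma~\ref{S4: phi(u_omega) L infinity}) plus a compact Sobolev embedding and the continuity of $\beta=\Phi^{-1}$, and identification of the $\omega$-limit set as the single constant $\frac{1}{{\rm vol}(\M)}\int_\M u_0\,d\mu_g$ via the vanishing of $\|(-\Delta)^{\sigma/2}\Phi(w)\|_2$ on invariant orbits together with conservation of mass. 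The only differences are cosmetic: you normalize to bounded data by a time shift and derive monotonicity of $V$ from the dissipation identity along the whole orbit, whereas the paper gets monotonicity from the $L_{m+1}$-contraction of Theorem~\ref{S4: global weak sol}(II) and runs the dissipation computation only on the $\omega$-limit set, citing \cite{Dafer78} for the constancy of $V$ there.
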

\begin{proof}
Define the metric space $X=L_2(\M)$ equipped with the $L_1-$norm.
Note that \eqref{S1: FPME} is associated with a continuous (nonlinear) semigroup $T(t)$ in $X$, defined by $T(t)u_0=u(t;u_0)$, where $u(t;u_0)$ is the unique strong solution to \eqref{S1: FPME} with initial datum $u_0$. 
It follows from Lemma~\ref{lem: unique strong sol} that $T(\cdot)$ is Lyapunov stable in the sense of \cite[Definition~4.1]{Dafer78}.

Denote the trajectory of $u_0$ by $\gamma(u_0)=\bigcup\limits_{t\geq 0} T(t)u_0$. The closure $\overline{\gamma(u_0)}$ in $L_1(\M)$ satisfies
$$
\overline{\gamma(u_0)}= \gamma(u_0) \cup \omega(u_0),
$$
as $u\in C([0,\infty), L_1(\M))$.
Here $\omega(u_0)$ is the $L_1(\M)$ $\omega-$limit set of $u_0$.
Let us characterize $\omega(u_0)$.
If there exists a sequence $(t_n)_n$ such that  $t_n\to \infty$ and
$$
u(t_n) \to w \in \omega(u_0) 
$$
in $L_1(\M)$,
then $(u(t_n))_n$ is Cauchy in $L_1(\M)$. 
In view of Theorem~\ref{S4: global weak sol}(II) and \eqref{S6: asym for u 1}, $(u(t_n))_n$ is uniformly bounded in $L_\infty(\M)$.  
By the Riesz-Thorin interpolation theorem, we immediately have that $(u(t_n))_n$ is Cauchy in $L_2(\M)$ and thus $w\in L_2(\M)$.
It follows from the discussion in the previous subsection that $T(t)w$ is a strong solution to \eqref{S1: FPME} with initial data $w$, and further its trajectory is Lyapunov stable in $L_1(\M)$.

In addition, it is clear that $\omega(u_0)=\omega(u(\tau))$ for any $\tau>0$. 
From Lemma~\ref{S4: phi(u_omega) L infinity}, Theorem~\ref{S4: global weak sol}(II) and \eqref{S6: asym for u 1}, we learn that $\Phi[(\gamma(u(\tau))]$ is bounded in $D((-\Delta)^{\sigma/2})$. 
Recall
$$
D((-\Delta)^{\sigma/2})= [L_2(\M), H^2_2(\M)]_{\sigma/2} \doteq H^\sigma_2(\M).
$$
By the Sobolev embedding, $\Phi[(\gamma(u(\tau))]$ is relatively compact in $L_1(\M)$.
Note that, by Theorem~\ref{S4: global weak sol}(II), $\overline{\Phi[(\gamma(u(\tau))]}$ is bounded in $L_\infty(\M)$.
We will show that 
\begin{equation}\label{S6: equi trajectory}
\overline{\Phi[(\gamma(u(\tau))]}= \Phi(\overline{\gamma(u(\tau))}).
\end{equation}
Indeed, since $\beta\in BC^{1/m}(\R)$ for $m>1$, we have
\begin{equation}\label{S6: beta}
\|u-v\|_1 \leq C\int_\M |\Phi(u)-\Phi(v)|^{1/m}\, d\mu_g \leq C \|\Phi(u)-\Phi(v)\|_1^{1/m};
\end{equation}
and due to $\Phi\in C^{1-}(\R)$,
\begin{equation}\label{S6: Phi}
 \|\Phi(u)-\Phi(v)\|_1 \leq C \|u-v\|_1 ,\quad u,v\in L_\infty(\M).
\end{equation}
It is clear that \eqref{S6: beta} and \eqref{S6: Phi} imply \eqref{S6: equi trajectory}. We can further derive from \eqref{S6: equi trajectory} and \eqref{S6: beta}   that 
$\overline{\gamma(u(\tau))}$ is sequentially compact in $L_1(\M)$ and thus $\gamma(u(\tau))$ is relatively compact in $L_1(\M)$.

We define
$$
V(\xi)=\frac{1}{m+1} \int_M |\xi|^{m+1}\, d\mu_g, \quad  \xi \in L_1(\M).
$$
Then $V$ is lower semicontinuous on $L_1(\M)$. 
It follows from Theorem~\ref{S4: global weak sol}(II) that $V$ is non-increasing along the orbit $\gamma(u_0)$  and thus is a Lyapunov functional for $T(\cdot) $ defined in  \cite{Dafer78}.
We  infer from \cite[Proposition~4.1]{Dafer78} that $V$ is constant on $\omega(u_0)$.

Take any $w_0\in \omega(u_0)$. 
Note that for any $t>0$, $w(t):=T(t)w_0 \in \omega(u_0)$ by the Lyapunov stability of $T(\cdot)$.
Recall that $w_0\in L_2(\M)$. 
%Note also that $w_0>0$ a.e. by Theorem~\ref{S4: global weak sol}(I).  
For any $T>\tau>0$, 
$$
\partial_t w , (-\Delta)^\sigma \Phi(w) \in L_\infty((\tau,T), L_1(\M))),
$$
and thus 
$$
\frac{d}{dt} V(w(t))= - \int_\M | (-\Delta)^{\sigma/2} \Phi(w)(t)|^2\, d\mu_g =0, \quad t>0.
$$
Thus $(-\Delta)^{\sigma/2} \Phi(w)(t)=0$ a.e.. This shows that
$$
-\Delta \Phi(w)(t) =(-\Delta)^{1-\sigma/2} (-\Delta)^{\sigma/2} \Phi(w)(t)=0,
$$
which implies that
$w(t)\equiv W$ for some constant $W$ and all $t\geq 0$ in view of the fact that $w\in C([0,\infty),L_1(\M))$. 
So the $L_1(\M)$ $\omega-$limit set of $u_0$ consists of constant functions. 
Because of the mass conservation property, cf. Theorem~\ref{S4: global weak sol}(III), we obtain 
$$
W=  \frac{1}{{\rm vol}(\M)} \int_\M u_0\, d\mu_g.
$$
This establishes \eqref{S6: eq Lp convergence} for $p=1$. The general case $p>1$ follows from the case $p=1$ and the $L_\infty$--contraction property,   cf. Theorem~\ref{S4: global weak sol}(II).
\end{proof}

%%%%%%%%%%%%%%%%%%%%%%%%%%%%%%%%%%%%%%%%%%%%%%%%%%%%%%%%%%%%%%

\subsection{\bf Asymptotic   behavior for $u_0$ with zero mean} 
\label{Section 7.4}

Now we consider a special case   $u_0\in L_2(\M)$ with zero mean, i.e. $\int_\M u_0\, d\mu_g=0$ and prove a refined asymptotic estimate.  
In the sequel, let $p \in  [2,\infty)$ and $t>2$.
We will closely follow the proof of \cite[Corollary~1.3]{BonGri05}.
First, \eqref{S5: der phi_r} gives
$$
\frac{d}{dt} \|u_\omega(t)\|_p^p \leq - \frac{4mp(p-1)}{d^2} \| (-\Delta)^{\sigma/2} |u_\omega(t)|^{d/2}\|_2^2,
$$
where $d=p+m-1$ as before.
Note that $u_\omega$ has zero mean for all $t\geq 0$. 
Following the proof of \cite[Lemma~3.2]{AlikRost81}, we can show that
$$
K \|   |u_\omega(t)|^{d/2}\|_2^2 \leq \| (-\Delta)^{\sigma/2} |u_\omega(t)|^{d/2}\|_2^2
$$
for some $K=K(m,p)$. This yields
$$
\frac{d}{dt} \|u_\omega(t)\|_p^p  \leq -B \|u_\omega(t)\|_d^d \leq -B \|u_\omega(t)\|_p^d 
$$
for some constant $B=B(p,m)$.
Setting  $\phi(t)= \|u_\omega(t)\|_p^p$, we thus obtain
$$
\frac{d}{dt} \phi(t)  \leq -B \phi(t)^{d/p}
$$
and thus
$$
\|u_\omega(t)\|_p \leq  \Big(\frac{1}{B t + \|u_0\|_p^{-(m-1)}} \Big)^{1/(m-1)},
$$
which also holds for $u$. When $t>2$, by \eqref{S6: asym for u 1} and Theorem~\ref{S4: global weak sol}(II)
$$
\|u(t)\|_\infty \leq  C  e^{E \|u(t-1)\|_{m_0}^{m-1}    } \|u(t-1)\|_p^\gamma \leq  \Big[\frac{C}{B (t-1) + \|u_0\|_p^{-(m-1)}} \Big]^{\gamma/(m-1)} 
$$
for some $C=C(p,m,n,\sigma,M_1,M_0)$. Moreover, for   any $\varepsilon\in (0,1)$, it follows from  the inequality $a^\varepsilon b^{1-\varepsilon}\leq a+b$ that
$$
\|u(t)\|_\infty \leq \frac{C \|u_0\|_p^{\varepsilon \gamma} }{[B (t-1)]^{\gamma(1-\varepsilon)/(m-1)}}.
$$
This completes the proof for Theorem~\ref{S6: Strong solution L 2 initial data}.

%%%%%%%%%%%%%%%%%%%%%%%%%%%%%%%%%%%%%%%%%%%

\appendix

\section{$m-$accretivity of the operator $[\omega+ (-\Delta_1)^\sigma]\Phi(u)$}\label{Section A}

Let $X_\R$ be a real Banach lattice with an order $\leq$. See \cite[Chapter~C-I]{ArenGrohNage86}. The complexification of $X_\R$ is a complex Banach lattice defined by
\begin{equation}
\label{S4.1: Banach lattice}
X:=X_\R \oplus i X_\R. 
\end{equation}
The positive cone of $X_\R$ is defined by
$$X_\R^+:=\{x\in X_\R:\, 0\leq x\}. $$
\begin{definition}
Let $\vartheta\in\R$, and $X$ be a complex Banach lattice defined as in \eqref{S4.1: Banach lattice}.
A semigroup $\{T(t)\}_{t\geq 0 } $ is called real if 
$$T(t) X_\R \subset X_\R ,\quad t\geq 0.$$ 
Further, we say that $\{T(t)\}_{t\geq 0 } $ is positive if 
$$T(t) X_\R^+ \subset X_\R^+  ,\quad t\geq 0.$$ 
\end{definition}

\begin{definition}\label{Def: Mark semigroup}
A strongly continuous semigroup $\{T(t)\}_{t\geq 0 } $ on $L_2(\M)$ is called  a Markov semigroup if it is both positive and $L_\infty$-contraction, i.e.  
$$
\|T(t) u\|_\infty \leq \|u\|_\infty,\quad t\geq 0, \quad u\in  L_\infty(\M) \cap L_2(\M).
$$
\end{definition}

Recall that $\Phi(x)=|x|^{m-1}x$ and   $\beta=\Phi^{-1}$. 
%Applying \cite[Theorem~1]{BreStr73}, we have the following proposition.
\begin{prop}
\label{S4: semilinear thm}
Let $\omega,\lambda>0$ and $\sigma\in (0,1)$.
For any $f\in L_1(\M)$, there exists a unique solution $u\in D((-\Delta_1)^\sigma)$ to 
$$ \lambda[\omega+(-\Delta_1 )^\sigma ] u +\beta(u) =f .$$
Moreover, for any $f_1,f_2\in L_1(\M)$, the corresponding solutions $u_1,u_2$ satisfy  
$$
\|\beta(u_1)  - \beta(u_2)  \|_1\leq \|f_1-f_2\|_1.
$$
\end{prop}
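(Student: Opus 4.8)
The plan is to place the resolvent equation $\lambda[\omega+(-\Delta_1)^\sigma]u+\beta(u)=f$ into the Brezis--Strauss framework \cite{BreStr73} for semilinear equations in $L_1$. Write $A:=\omega+(-\Delta_1)^\sigma$. By Proposition~\ref{S2: Lap-frac invert}, $-A$ generates a contraction $C_0$-semigroup on $L_1(\M)$ with $0\in\rho(A)$ (this is where $\omega>0$ enters), while Proposition~\ref{S2: Markov semigroup frac} together with the subordination construction shows this semigroup is positivity preserving and simultaneously $L_1$- and $L_\infty$-contractive. In particular $A$ is $m$-accretive and $T$-accretive in $L_1(\M)$, so that for every $\mu>0$ the resolvent $(I+\mu A)^{-1}$ is an order-preserving contraction. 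These are exactly the structural hypotheses under which the abstract theory applies.

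I would dispatch the contraction estimate and uniqueness first, since they are the accretive half of the statement. Subtracting the two equations gives
\begin{equation*}
\lambda A(u_1-u_2)+\big(\beta(u_1)-\beta(u_2)\big)=f_1-f_2 .
\end{equation*}
Testing against a selection in $\sg(u_1-u_2)=\sg(\beta(u_1)-\beta(u_2))$ (the two signs agree since $\beta$ is strictly increasing) and integrating over $\M$, the nonlinear term reproduces $\|\beta(u_1)-\beta(u_2)\|_1$, and the linear term contributes $\lambda\int_\M A(u_1-u_2)\,\sg(u_1-u_2)\,d\mu_g$. The crux is that this last integral is nonnegative: the $\omega$-part gives $\omega\|u_1-u_2\|_1\ge0$, and for the fractional part I would invoke the generalized Stroock--Varopoulos inequality of Lemma~\ref{S2: general S-V ineq} with $\psi$ a smooth increasing approximation of the Heaviside function, which yields $\int_\M \psi(v)(-\Delta)^\sigma v\,d\mu_g\ge0$ and, upon letting $\psi$ tend to the Heaviside function, $\int_\M (-\Delta)^\sigma v\,\sg(v)\,d\mu_g\ge0$ for $v=u_1-u_2$. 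Hence $\|\beta(u_1)-\beta(u_2)\|_1\le\|f_1-f_2\|_1$, and uniqueness is immediate because $\beta$ is injective.

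For existence I would argue by approximation. Replacing $\beta$ by its Yosida regularization $\beta_\varepsilon$ (Lipschitz, monotone, $\beta_\varepsilon(0)=0$) and taking data $f\in L_1(\M)\cap L_2(\M)$, the operator $u\mapsto \lambda A u+\beta_\varepsilon(u)$ is maximal monotone and coercive on $L_2(\M)$ — the coercivity coming from the term $\lambda\omega\,I$ — so it is surjective and yields a unique $u_\varepsilon\in D((-\Delta)^\sigma)$. The accretivity computation above, applied with $f_2=0$ and $u_2=0$, furnishes the $\varepsilon$-uniform bound $\|\beta_\varepsilon(u_\varepsilon)\|_1\le\|f\|_1$, while testing the equation with $\beta_\varepsilon(u_\varepsilon)$ and using Lemma~\ref{S2: S-V ineq} controls $\|(-\Delta)^{\sigma/2}\Phi(\cdot)\|_2$. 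I would then extract weak limits $u_\varepsilon\rightharpoonup u$ and $\beta_\varepsilon(u_\varepsilon)\rightharpoonup\chi$ and identify $\chi=\beta(u)$ by Minty's monotonicity trick; the membership $u\in D((-\Delta_1)^\sigma)$ then follows since $\lambda A u=f-\beta(u)\in L_1(\M)$. Finally, the passage from $f\in L_1(\M)\cap L_2(\M)$ to arbitrary $f\in L_1(\M)$ is handled by density together with the contraction estimate already established.

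The main obstacle is precisely the identification of the weak limit of the nonlinear term $\beta_\varepsilon(u_\varepsilon)$ with $\beta(u)$: on a general complete, possibly non-compact manifold there is no compact Sobolev embedding available to upgrade the weak convergence of $u_\varepsilon$ to convergence almost everywhere, so one cannot simply pass to the limit pointwise. The remedy is to exploit that the $L_1$-contraction also delivers equi-integrability of $\{\beta_\varepsilon(u_\varepsilon)\}$ (by comparison, in the spirit of \cite{BreStr73}), which strengthens the weak $L_1$ convergence enough to close the monotonicity argument; the coercivity furnished by $\omega>0$ is what keeps every estimate uniform and thereby removes any need for a volume or curvature hypothesis.
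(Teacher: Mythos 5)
Your proposal is correct in substance, but it does substantially more work than the paper, and the two proofs diverge in execution rather than in framework. The paper's proof is three lines: it verifies the two structural hypotheses of \cite[Theorem~1]{BreStr73} --- namely the resolvent maximum principle $\sup\,[\id+\lambda(\omega+(-\Delta_1)^\sigma)]^{-1}v\leq\max\{0,\sup v\}$ (argued as in \cite[Lemma~4.1]{RoidosShao18}) and the $L_1$-coercivity $C\|v\|_1\leq\|[\omega+(-\Delta_1)^\sigma]v\|_1$ supplied by Proposition~\ref{S2: Lap-frac invert} --- and then quotes that theorem, whose conclusion is verbatim the statement to be proved (existence, uniqueness, and the contraction $\|\beta(u_1)-\beta(u_2)\|_1\leq\|f_1-f_2\|_1$). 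You, by contrast, re-derive the content of the Brezis--Strauss theorem in this concrete setting: T-accretivity via the generalized Stroock--Varopoulos inequality (Lemma~\ref{S2: general S-V ineq}) with Heaviside approximations --- incidentally the same device the paper itself uses for parabolic uniqueness in Lemma~\ref{lem: unique strong sol}, and note that lemma is stated for nonnegative $u$, so you are extending it to signed differences exactly as the paper does --- together with existence via Yosida regularization, the $L_2$ theory of coercive maximal monotone operators, Minty's trick, and a density passage. Both routes rest on the same two pillars: the sub-Markovian order structure of the subordinated semigroup and the invertibility of $\omega+(-\Delta_1)^\sigma$ on $L_1(\M)$, which is where $\omega>0$ enters. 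Yours is self-contained and makes visible where each structural property is used; the paper's is far shorter because it defers all the analytic work to the classical reference.

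Two points to tighten if you flesh out your argument. First, in the final density step the contraction estimate only makes $\beta(u_k)$ Cauchy in $L_1(\M)$; to recover convergence of $u_k$ itself and to close the equation you should write $u_k=\lambda^{-1}[\omega+(-\Delta_1)^\sigma]^{-1}\bigl(f_k-\beta(u_k)\bigr)$ and invoke the boundedness of this inverse on $L_1(\M)$ (again Proposition~\ref{S2: Lap-frac invert}); this also settles $u\in D((-\Delta_1)^\sigma)$, which you assert somewhat quickly from $\lambda Au=f-\beta(u)\in L_1(\M)$ --- strictly one needs the consistency of the $L_1$- and $L_2$-realizations, standard but worth a sentence. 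Second, the ``main obstacle'' you name at the end is largely moot in your own scheme: Minty's argument in the $L_2$-regularized step needs no almost-everywhere convergence or compactness, and in the density step the convergence of $\beta(u_k)$ is strong, so no equi-integrability argument is actually required.
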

\begin{proof}
Following a similar argument to the proof of \cite[Lemma~4.1]{RoidosShao18}, one can show that given any $v\in L_1(\M)$,
$$ 
\sup [\id +\lambda (\omega+ (- \Delta_1)^\sigma)]^{-1}v \leq \max \{0, \sup v\}.
$$
Proposition~\ref{S2: Lap-frac invert} implies that there exists some $C>0$ such that for all $v\in L_1(\M)$
$$
C\|v\|_1 \leq \|[\omega+(-\Delta_1)^\sigma] v\|_1, \quad v\in D(( -\Delta_1)^\sigma).
$$
Now the proposition is a direct consequence of \cite[Theorem~1]{BreStr73}.
\end{proof}

%%%%%%%%%%%%%%%%%%%%%%%%%%%%%%%%%%%%%%%%%%%%%%

\begin{definition}\cite[Chapter II.3]{Bar73}
$\cA: D(\cA)\subset X \to X$ is a  nonlinear operator defined in a Banach space $X$.
\begin{itemize}
\item[(i)]  $\cA$ is called accretive if for all $\lambda>0$
$$
\|(\id +\lambda \cA) x_1 - (\id +\lambda \cA) x_2\|_X\geq \|x_1-x_2\|_X,\quad x_1,x_2\in D(\cA).
$$
\item[(ii)]  $\cA$ is called $m$-accretive if $\cA$ is accretive and it satisfies the range condition
$$
Rng(\id +\lambda \cA)=X, \quad \lambda>0.
$$
\end{itemize}
\end{definition}

\begin{prop}
\label{Prop: m-accretive}
The operator $[u\mapsto \cA(u):=[\omega+(-\Delta_1)^\sigma]\Phi(u) ]:D(\cA)\subset L_1(\M)\to L_1(\M)$ is $m$-accretive   with domain 
$$
D(\cA)=\{u\in L_1(\M): \Phi(u)\in D((-\Delta_1)^\sigma)\}  \quad \text{dense in  } L_1(\M).
$$ 
\end{prop}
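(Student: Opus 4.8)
The plan is to read off both the range condition and the accretivity directly from Proposition~\ref{S4: semilinear thm} by means of the substitution $u=\Phi(v)$, and then to verify density of $D(\cA)$ by an explicit approximation. Fix $\lambda,\omega>0$ and $f\in L_1(\M)$. Solving $(\id+\lambda\cA)v=f$ means $v+\lambda[\omega+(-\Delta_1)^\sigma]\Phi(v)=f$. Setting $u:=\Phi(v)$, equivalently $v=\beta(u)$ since $\beta=\Phi^{-1}$, this becomes $\lambda[\omega+(-\Delta_1)^\sigma]u+\beta(u)=f$, which is precisely the equation solved in Proposition~\ref{S4: semilinear thm}. The whole argument rests on the fact that $\Phi$ and $\beta$ are mutually inverse homeomorphisms of $\R$ fixing $0$, so this substitution is a genuine bijection between the solution sets of the two equations.

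For the range condition I would argue as follows. Given $f\in L_1(\M)$, Proposition~\ref{S4: semilinear thm} yields a unique $u\in D((-\Delta_1)^\sigma)$ with $\lambda[\omega+(-\Delta_1)^\sigma]u+\beta(u)=f$. Put $v:=\beta(u)$; then $\Phi(v)=u\in D((-\Delta_1)^\sigma)$, so $v\in D(\cA)$ and $(\id+\lambda\cA)v=f$. Hence $Rng(\id+\lambda\cA)=L_1(\M)$ for every $\lambda>0$. For accretivity, take $v_1,v_2\in D(\cA)$, set $f_i:=(\id+\lambda\cA)v_i$ and $u_i:=\Phi(v_i)\in D((-\Delta_1)^\sigma)$; by uniqueness each $u_i$ is the solution furnished by Proposition~\ref{S4: semilinear thm} with datum $f_i$, so the contraction estimate there gives $\|v_1-v_2\|_1=\|\beta(u_1)-\beta(u_2)\|_1\le\|f_1-f_2\|_1=\|(\id+\lambda\cA)v_1-(\id+\lambda\cA)v_2\|_1$, which is exactly accretivity.

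It remains to prove that $D(\cA)=\{v\in L_1(\M):\Phi(v)\in D((-\Delta_1)^\sigma)\}$ is dense, and this is where a short independent argument is needed. The plan is to exhibit explicit members of $D(\cA)$: since $C_c^\infty(\M)\subset D(\Delta_1)\subset D((-\Delta_1)^\sigma)$ --- the last inclusion because $0<\sigma<1$, so $D(\Delta_1)$ embeds in the domain of its fractional power --- every $g\in C_c^\infty(\M)$ satisfies $g=\Phi(\beta(g))$, whence $\beta(g)\in D(\cA)$. I would then show that $\{\beta(g):g\in C_c^\infty(\M)\}$ is dense in $L_1(\M)$: given $w\in C_c(\M)$, choose $g_k\in C_c^\infty(\M)$ with supports in a fixed compact set and $g_k\to\Phi(w)$ uniformly; since $\beta$ is continuous, hence uniformly continuous on bounded sets, $\beta(g_k)\to\beta(\Phi(w))=w$ uniformly on that fixed compact support, and therefore in $L_1(\M)$. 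As $C_c(\M)$ is dense in $L_1(\M)$, so is $D(\cA)$.

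The genuinely hard analytic content --- solvability of the resolvent equation together with the $L_1$-contraction --- is already packaged inside Proposition~\ref{S4: semilinear thm}, so the main obstacle here is conceptual rather than computational: one must recognize that the fully nonlinear resolvent problem for $\cA$ is converted, via $u=\Phi(v)$, into exactly the quasilinear problem handled there, and check that this substitution respects domains and uniqueness. The only other point deserving care is the density step, specifically the inclusion $C_c^\infty(\M)\subset D((-\Delta_1)^\sigma)$ and the continuity of the Nemytskii map $v\mapsto\beta(v)$ used in the approximation.
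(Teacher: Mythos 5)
Your proposal is correct and follows essentially the same route as the paper: $m$-accretivity is read off from Proposition~\ref{S4: semilinear thm} via the substitution $u=\Phi(v)$ (the paper states this in one line; you spell out the bijection between resolvent problems, including the range condition and the contraction estimate), and density of $D(\cA)$ is proved by approximating through functions of the form $\beta(g)$ with $g\in C_c^\infty(\M)\subset D((-\Delta_1)^\sigma)$, exactly as in the paper. The only real difference lies in the density step: the paper splits into the cases $m>1$ and $m<1$, using the global $1/m$-H\"older (resp.\ locally Lipschitz) modulus of $\beta$ applied to $L_1$-convergent approximants, whereas you use uniformly convergent approximants supported in a fixed compact set together with uniform continuity of $\beta$ on bounded sets, which treats all $m>0$ uniformly; both work, and yours avoids the case distinction at no cost. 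One micro-point worth making explicit in your range-condition argument: to place $v=\beta(u)$ in $D(\cA)$ you also need $\beta(u)\in L_1(\M)$, which is immediate since $\beta(u)=f-\lambda[\omega+(-\Delta_1)^\sigma]u$ with both terms on the right in $L_1(\M)$.
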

%Then by the strict monotonicity of $\Phi$, it is a bijection from $D(\cA)$ to $D((-\Delta_{g,1})^\sigma)$. 
%Note that $u\in D(\cA)$ iff $\Phi(u)\in D((-\Delta_1)^\sigma)\subset \cH^{0,\gamma_1}_1(\M)$. This shows that $u\in \cH^{0,\gamma_m}_m(\M)\subset \cH^{0,\gamma_1}_1(\M)$.
\begin{proof}
Proposition~\ref{S4: semilinear thm} implies that  $\cA $ is $m$-accretive.  Next, we will prove that $D(\cA)$ is dense in $L_1(\M)$, which is clearly true for $m=1$.

Case 1: $m> 1$.  Observe that in this case $\beta\in BC^{1/m}(\R)$.
%Because $C_0^\infty(\M)\subset D( \Delta_{g,1})\subset D((- \Delta_{g,1})^\sigma)$, $D((- \Delta_{g,1})^\sigma)$ is dense in any $\cH^{0,\gamma_p})p(\M)$ for all $1\leq p<\infty$. 
Given any $w\in C_c^\infty(\M)\subset L_m(\M)$, take a sequence $(u_k)_k \subset C_c^\infty(\M) \subset D((- \Delta_1)^\sigma)$ converging to $\Phi(w)$ in  $L_1(\M)$. 
Without loss of generality, we may assume that all $u_k$ and $w$ are   supported in some $\Omega\subset\!\subset \M$.
%, for otherwise we can always multiply $u_k$ by a smooth cut-off function $\psi\equiv 1$ in ${\rm supp}(w)$ and vanishes outside $\Omega$. 
One has
$$
\|\beta(u_n)- w\|_1 \leq C \| (u_n-\Phi(w))^{1/m}\|_1 \leq C (\Omega)\| u_n-\Phi(w)\|_1^{1/m}.
$$
%Here the constant $C$ depends only on $\Omega$.
This proves that the closure  of $D(\cA)$ contains $C_c^\infty(\M)$. Since $C_c^\infty(\M) $ is dense in $L_1(\M)$, this shows the density of $D(\cA)$  in $L_1(\M)$.
 
Case 2: $m<1$. In this case, $\beta\in C^{1-}(\R)$.  Given arbitrary $w\in C_c^\infty(\M)$, there exists a  sequence $(u_k)_k \subset C_c^\infty(\M) \subset D((- \Delta_1)^\sigma)$ converging to $\Phi(w) $ in  $L_1(\M)$ satisfying   $\|\beta(u_k)\|_\infty\leq 2\|w\|_\infty$. So the Lipschitz continuity of $\beta$ implies 
\begin{align*}
\|\beta(u_k) - w\|_1 \leq C(w) \| u_k  - \Phi(w)\|_1 .
\end{align*}
As in Case 1, this implies the density of $D(\cA)$ in $L_1(\M)$. 
\end{proof}

%%%%%%%%%%%%%%%%%%%%%%%%%%%%%%%%%%%%%%%%%%%

\section*{Acknowledge}

Parts of the paper were completed while the second author was an assistant professor at Georgia Southern University. 
He would like to thank the staff and faculty at Georgia Southern University for all their support. 

We would like to express our gratitude to Prof. G. Grillo for pointing out to us the recent paper \cite{BerBonGanGri} and other related work on the porous medium equation in negatively curved spaces. 
We also would like to express our
appreciation to the anonymous reviewer for carefully reading the manuscript.

%%%%%%%%%%%%%%%%%%%%%%%%%%%%%%%%%%%%%%%%%%%%%%%%%%%%%%%%%%%%%%%

\end{document}